\crefname{algocf}{algorithm}{algorithm}
\Crefname{algocf}{Algorithm}{Algorithms}
\numberwithin{equation}{section}
\newcommand{\dd}{\,\mathrm{d}}
\renewcommand{\scr}[1]{{\mathcal #1}}
\newcommand{\bb}[1]{{\mathbb #1}}
\newcommand{\argmin}{\operatornamewithlimits{argmin}}
\newcommand{\Bm}{\begin{pmatrix}}
	\newcommand{\Em}{\end{pmatrix}}
\newcommand{\T}{{\prime}}
\newcommand{\der}[2]{ \ifthenelse{\isempty{#1}}{\frac{\dd}{\dd #2}}{\frac{\dd #1}{\dd #2}} }
\newcommand{\pder}[2]{ \ifthenelse{\isempty{#1}}{\frac{\partial}{\partial #2}}{\frac{\partial #1}{\partial #2}} }
\newcommand{\ind}{\mathbf{1}}
\newcommand{\dto}{\rightsquigarrow}
\newcommand{\eps}{\varepsilon}
\theoremstyle{definition}
\newtheorem{thm}{Theorem}[section]
\newtheorem{prop}[thm]{Proposition}
\newtheorem{lem}[thm]{Lemma}
\newtheorem{cor}[thm]{Corollary}
\newtheorem{rem}[thm]{Remark}
\newtheorem{ex}[thm]{Example}
\newtheorem{defn}[thm]{Definition}
\newtheorem{ass}[thm]{Assumption}
\newcommand{\abs}[1]{\ensuremath{\left\lvert #1 \right\rvert}}
\newcommand{\EE}{\mathbb{E}}
\title{%
Guided simulation of conditioned chemical reaction networks}
\date{\today}
\author{Marc Corstanje  \qquad Frank van der Meulen }
\address{Department of Mathematics, Vrije Universiteit Amsterdam}
\begin{document}
	\maketitle
	\let\thefootnote\relax\footnotetext{Contact: \href{mailto:M.A.Corstanje@vu.nl}{M.A.Corstanje@vu.nl}}
	\begin{abstract}
		
		Let $X$ be a chemical reaction process, modeled as a multi-dimensional continuous-time jump process.  Assume that at given times $0< t_1 < \cdots <t_n$, linear combinations $v_i = L_i X(t_i),\, i=1,\dots ,n$ are observed for given matrices $L_i$. 
We show how  the process that is conditioned on hitting the states $v_1,\dots, v_n$ is obtained by  a change of measure on the law of the unconditioned process. This results in an algorithm for obtaining weighted samples from the conditioned process. Our results are illustrated by  numerical simulations.
	
	 \
		
		\noindent \textbf{Keywords}: Chemical reaction processes, Doob's $h$-transform, exponential change of measure, guided process.
		
	\
	
	\noindent {\bf AMS subject classification}: 60J27, 60J28, 60J74 
	\end{abstract}

	\section{Introduction}
	\label{sec:Introduction}

	Chemical reaction networks are used to study a wide class of biological, physical and chemical processes that evolve over time. For instance, one can think of the transcription of genes to mRNA and then the translation to protein, the kinetics of a virus or the dynamics of chemical components reacting with each other. The forward evolution of such processes can be described in different ways: {\it (i)} a system of ordinary differential equations, see e.g.\ \cite{feinberg2019}, {\it (ii)} a system of stochastic differential equations, see e.g.\ \cite{fuchs2013} or {\it (iii)} continuous-time Markov jump processes, as in \cite{anderson2007}.   It is the third option that we consider in this paper. 
	
	\subsection{Chemical reaction networks}
	\label{subsec:Basic-model}
	Chemical reactions are described as linear combinations of chemical components merging into each other. Typically, one denotes a reaction in which components $A$ and $B$ are merged into $C$ and $D$ by 
	\begin{equation}\label{eq:exABCD} A+B \to C + D.\end{equation}
	More generally, a \textit{chemical reaction network} consists of:
	\begin{itemize}
		\item a \textit{Species} set $\scr{S} = \left\{S_1, \dots, S_d\right\}$ which consists of the chemical components whose counts we model; 
		\item a \textit{Reaction} set $\scr{R}$. A reaction $\ell\in\scr{R}$ is modeled as $\sum_{k}\nu_{k\ell}S_k\to \sum_k \nu_{k\ell}' S_k$. We characterize $\ell$ by the change of counts of the species $\xi_\ell = \left( \nu_{k\ell}'-\nu_{k\ell}\right)_{k=1}^d$. 
	\end{itemize}
	For example, in \eqref{eq:exABCD}, the species set is $\{A,B,C,D\}$ and $\xi$ is given by $(-1,-1,1,1)$. 
	Let $\bb{S}\subseteq\bb{Z}_{\geq 0}^d$ denote the state space of the reaction system. A vector in $\bb{S}$ is of the form $(x_k)_{k=1}^d$, where $x_k$ denotes the species count of type $S_k$. We study a Markov process $X = \left(X(t)\right)_{t\geq 0}$ on $\bb{S}$ that models the evolution of species counts over time. That is, at time $t$, the $k$-th element of  $X(t)$ is given by $\# S_k(t)$. We assume that the initial state $X(0)=x_0$ of the process is known. At a given time $t$, the time until a reaction of type $\ell$ takes place is given by $\tau_\ell$.  When reaction $\hat{\ell} = \argmin_{\ell\in\scr{R}}\tau_\ell$ occurs, the process jumps at time $t+\tau_{\hat\ell}$ to  $X\left(t+\tau_{\hat{\ell}}\right) = X(t)+\xi_{\hat\ell}$. The reactions are assumed to occur according to an inhomogeneous Poisson process with intensity function that we  refer to as the \textit{reaction rate}. A more detailed description of the stochastic model is given in \Cref{sec:GeneralSetting}. The process $X$ evolving on the chemical reaction network is referred to as the \textit{chemical reaction process}. 

	\subsection{Statistical problem}
	\label{subsec:intro-StatisticalProblem}

Suppose at fixed times $0<t_1 <\cdots <t_n$, we  observe $v_1,\dots v_n$, where $v_k=L_k X(t_k)$ with $L_k\in\bb{R}^{m_k\times d}$ and $m_k \leq d$, $k=1,\dots n$. Not assuming $L$ to be the identity matrix is for example important in applications where the measuring device cannot distinguish two or more species, so that only sums of  their counts are observed.  We will assume the rows of each $L_k$ to be linearly independent.  Typically reaction rates are unknown and we wish to infer those from the data. Suppose the reaction rate depend on a parameter vector $\theta$. Likelihood-based inference for $\theta$ is hampered by the lack of closed-form expressions for the  transition probabilities of $X$. However, if the process were observed continuously over time, the problem would be easier. Therefore, it is natural to employ a data-augmentation  scheme where we iteratively sample $X$ on $[0,t_n]$ conditional on $v_1,\dots, v_n$ and $\theta$ and then update $\theta$ conditional on $X$. In this paper, we  focus on the first step, sampling from $\left(X \mid L_k X(t_k)=v_k,\,  k=1,\dots n\right)$. It is a key objective of this paper to show rigorously how this can be done efficiently. 	Note that a simple rejection sampling scheme where we discard all paths contradicting the observations is valid but very inefficient in most settings.

	\subsection{Approach: conditioning by guiding}
		Our approach builds on earlier work in \cite{corstanje2021conditioning} for general Markov processes in case of a single observation. Let us highlight the main points. The law of the process $X$, conditioned to be in a given state at fixed times, is obtained through  Doob's $h$-transform. That is, there is a function $h\colon[0,T]\times\bb{S}\to\mathbb{R}_+$ that depends on the transition probabilities of $X$ which induces a change of measure. Under the new measure, $\bb{P}^{h}$, the process is conditioned to hit the observed states at  times of observation. Since $h$ is typically unknown, we replace it by a fully tractable function $g\colon[0,T]\times\bb{S}\to\mathbb{R}_+$ that itself induces a change of measure to a measure $\bb{P}^{g}$. Under certain conditions, $\bb{P}^{h}$ is absolutely continuous with  respect to $\bb{P}^{g}$ and
		\begin{equation}\label{eq:keyresult}
 		\der{\bb{P}^{h}}{\bb{P}^{g}}(X)=\frac{1}{h(0,x_0)}F(X). 
		\end{equation}
		Here, $F$ is known in closed form, depends on $g$ but does not depend on $h$. Weighted samples of the conditioned process can therefore be obtained by sampling under $\bb{P}^g$. 
The argument for making the above precise is not too hard in the case where $g$ is bounded and bounded away from zero. However, some natural choices we discuss and have been proposed in the literature require a more delicate argument.

\subsection{Related literature}
	Statistical inference for chemical reaction processes has received considerable attention over the past decade. In this section we summarise related work, while in the next section we highlight contributions of  this paper. 
 \cite{Rathinam2021}  consider the setting where one observes a subset of the species counts \emph{continuously} over time and wants to filter the latent species counts. 
\cite{Reeves2022} parametrise  model transition rates  by  neural networks, while assuming all trajectories are \emph{fully continuously} observed. Parameter estimation is then done by gradient ascent to maximise the log likelihood. 

In this paper, we consider \emph{discrete-time} observations and therefore the works below are more closely related to our work. \cite{warne2019} give an introduction to chemical reaction processes and consider estimation for discrete-time partial observations with Gaussian noise focussing on Approximate Bayesian Computation.  
\cite{fearnhead2008computational} and \cite{golightly2019efficient} are probably closest related to our approach. The common starting points of these works is a slightly informal computation that reveals how the reaction rate of the chemical reaction processes changes upon conditioning the process on a future observation (we present this argument at the start of Section \ref{sec:doobh}). While the reaction rate for the conditioned process is intractable, it can be approximated and this simply boils down to choosing $g$ as above. \cite{fearnhead2008computational} approximates $g$ using Euler discretisation of the Chemical Langevin Equation (CLE) assuming the process is fully observed without error.  \cite{golightly2019efficient}
		consider conditioning on a partial observation corrupted by Gaussian noise. Their choice of $g$ is based on the linear noise approximation to the CLE. This is shown to outperform 
		the approach of \cite{fearnhead2008computational} and earlier work in \cite{golightly2015bayesian}. 
		
		\cite{georgoulas2017unbiased} construct an unbiased estimator for the likelihood using  random truncations and computation of matrix exponentials. This in turn is used to exploit the  pseudo-marginal MCMC algorithm (\cite{andrieu2009pseudo}) for parameter estimation. 
Building upon this work \cite{sherlock2023exact}  introduce the 
minimal extended statespace algorithm  and the nearly minimal
extended statespace algorithm  to alleviate the problem of choosing a proposal distribution for the truncation level, as required in \cite{georgoulas2017unbiased}. 
	
	\cite{alt2023entropic} consider the same setting as we do and derive approximations to the filtering and smoothing distributions using expectation propagation.

	\subsection{Contribution}
	
	We provide sufficient conditions on $g$ such that \eqref{eq:keyresult} holds true. 
	We extend the result in \cite{corstanje2021conditioning} for a single complete observation to multiple partial observations in the context of chemical reaction processes. Moreover, we discuss a variation of the \emph{next reaction} algorithm by \cite{gillespie1976} for sampling from a class of reaction networks with unbounded time-dependent reaction intensities.
	
The proposed methods fit within the framework of  
	Backward Filtering Forward Guiding (\cite{vandermeulen2020automatic}), drawing strongly on techniques for  exponential changes of measure as outlined in  \cite{palmowski2002}. This enables us to construct a \emph{guided} process that at any time  takes into account \emph{all} future conditionings.

Compared to	 \cite{fearnhead2008computational}
	 and \cite{golightly2019efficient}, we consider the setting of multiple future conditionings (rather than one), without imposing extrinsic noise on the observations. Moreover, we derive the conditioned process and likelihood ratio in \eqref{eq:keyresult} on path space. Sufficient conditions  on $g$ to guarantee absolute continuity are given in Theorem \ref{thm:absolute-continuity-general}. It turns out that the choices for $g$ in \cite{fearnhead2008computational} and  \cite{golightly2019efficient} satisfy the assumptions of this theorem. 
In numerical examples we show that flexibility in choosing $g$  is particularly beneficial {in cases where} some of the components of the chemical reaction process have counts that vary monotonically over time.

\subsection{Outline} 
	\label{subsec:Introduction-Outline}
	We introduce stochastic chemical reaction processes in \Cref{sec:GeneralSetting} and discuss examples that we will study. In \Cref{sec:doobh}, we describe the approach of conditioning chemical reaction processes by guiding  and present conditions on $g$ such that {$\bb{P}^h$ is absolutely continuous with respect to $\bb{P}^g$}. In \Cref{sec:choices_for_g} we consider various choices for $g$. Conditions for equivalence of $\bb{P}^h$ and $\bb{P}^g$ are discussed in Section \ref{sec:equivalence}.  In Section \ref{sec:simulations}   we present  methods for simulation of conditioned chemical reaction processes together with numerical illustrations.  We end with a discussion section. The appendix contains various proofs.

	\subsection{Frequently used notation}
	\label{subsec:Introduction-Notation}
	Throughout, we assume that we have an underlying probability space $\left(\Omega, \scr{F}, \bb{P}\right)$.
For a stochastic process $X$, we use the notation $X^t = \{ X(s) \colon s\leq t\}$. Given $L \in \bb{R}^{m\times d}$ with $m\leq d$ and $v\in \bb{R}^m$, we denote the inverse image of $v$ under $L$ by $L^{-1}v = \{x\in\bb{R}^d : Lx=v\}$. For functions $f_1(t,x)$ and $f_2(t,x)$ of time and space, we say that $f_1\propto f_2$ if there exists a differentiable function $\kappa$ of time such that for all $t,x$, $f_1(t,x) = \kappa(t)f_2(t,x)$. 
	Derivatives with respect to a variable representing $t$, say $\partial / \partial t$ are denoted by $\partial_t$. 
We denote by 
\begin{equation}
\label{eq:defAn}
A_n=\{ L_kX(t_k)=v_k,\, k=1,\dots,n \},
\end{equation}
a set of conditionings. If a measure $\mu$ is absolutely continuous with respect to $\nu$, we write $\mu \ll \nu$. 

\section{Chemical reaction processes}
	\label{sec:GeneralSetting}
	We construct a stochastic process to model the dynamics of the chemical reaction network described in \Cref{subsec:Basic-model} following Chapter 1 of \cite{anderson2015stochastic}. Let $X$ be a Markov process on $\bb{S}$ such that the $i$-th component of $X(t)$, $X_i(t)$, represents the {frequency} of species $S_i$ at time $t$. A reaction $\ell$ is represented through a difference vector $\xi_\ell\in\bb{S}$ and an intensity $\lambda_\ell\colon[0,\infty)\times\bb{S}\to[0,\infty)$. We assume 	\begin{equation}
		\label{eq:JumpProbabilities}
		\bb{P}\left(X({t+\Delta})-X(t) = \xi_\ell\mid \scr{F}_t^X\right) = \lambda_\ell(t, X(t))\Delta + o(\Delta), \qquad \Delta\downarrow 0, 
	\end{equation}
where $\scr{F}_t^X = \sigma(X^t)$. 
	The jump probabilities specified in \eqref{eq:JumpProbabilities} correspond to a  process with jumps $(\xi_\ell)_{\ell\in\scr{R}}$ and  jump rate functions $(\lambda_\ell)_{\ell\in\scr{R}}$. 
 Throughout, we impose the following assumptions on the network. 
	\begin{ass}
		\label{ass:AssumptionsOnProcess}
			$(\lambda_\ell)_{\ell\in\scr{R}}$ and $(\xi_\ell)_{\ell\in\scr{R}}$ are such that
			\begin{enumerate}[label={\color{red} (\ref{ass:AssumptionsOnProcess}\alph*)}]
				\item \label{ass:AssumptionsOnProcess-NonnegativeRates}
				$\lambda_\ell(t, x)\geq 0$ for all $t\geq 0$, $x\in\bb{S}$ and $\ell\in\scr{R}$. 
				\item \label{ass:AssumptionsOnProcess-WellDefinedJumps}
				For all $\ell\in\scr{R}$, $\xi_\ell\in\bb{Z}^d$ is such $\lambda_\ell(t,x)>0$ implies $x+\xi_\ell\in\bb{S}$ for all $t\geq 0$ and $x\in\bb{S}$.
				\item \label{ass:AssumptionsOnProcess-FiniteSum}
				For all $t\geq 0$ and $x\in\bb{S}$, $\int_0^t\sum_{\ell\in\scr{R}}\lambda_\ell(s,x) \dd s <\infty$.
			\end{enumerate}
	\end{ass}
For a stochastic process $X(t)$, let $T_K=\inf\{ t : |X(t)|\geq  K\}$ and $T_\infty=\lim_{K\to\infty} T_K$. 
Let $X$ be the jump process with jumps $(\xi_\ell)_{\ell\in\scr{R}}$ and  jump rate functions $(\lambda_\ell)_{\ell\in\scr{R}}$ satisfying 
		\begin{equation}
			\label{eq:FormOfX}
			 X(t) = x_0 + \sum_{\ell\in\scr{R}} \xi_\ell Y_\ell\left(\int_0^t \lambda_\ell(s, X(s))\dd s \right), \qquad  0\leq t< T_{\infty} ,
		\end{equation}
	where $(Y_\ell)_{\ell\in\scr{R}}$ are independent, unit rate Poisson processes. We assume the process to be non-explosive: $\mathbb{P}(T_\infty<\infty)=0$. For $f\colon [0,T] \times \bb{S} \to \bb{R}$ with $f(t,\cdot)$ finitely supported for every $t$  define 
\begin{equation}
		\label{eq:Generator} (\scr{L}f)(t,x) =  \sum_{\ell \in\scr{R}} \lambda_\ell(t, x) \left[ f(t,x+\xi_\ell)-f(t,x)\right], 
\end{equation}
By Theorem 1.22 in \cite{anderson2015stochastic}, there exists a filtration $(\scr{F}_t)_{t\geq 0}$ such that for all such $f$ 
 	\[ D^f(t):=f(t, X(t))-\int_0^t (\partial_s + \scr{L}) f(s, X(s))\dd s \]
	is an $(\scr{F}_t)$-martingale. 
	That is, $X$ is the unique  solution to the martingale problem for $\scr{A}:= \partial_t + \scr{L}$.

\subsection{Distribution of  reaction times}
	\label{subsec:Time-distriubutions}
To the $\ell$-th reaction, we associate a reaction time $\tau_\ell$, with distribution specified by
	\begin{equation} 
		\label{eq:JumpDistribution-Inhomogeneous}
		\bb{P}\left(\tau_\ell > \Delta \mid X(t)=x\right) = \exp\left(-\int_t^{t+\Delta} \lambda_\ell (s,x)\dd s\right), \qquad t\geq 0, \Delta>0, x\in\bb{S}.
	\end{equation}

	If $\lambda_\ell$ is constant in time, it follows from \eqref{eq:JumpDistribution-Inhomogeneous} that $\tau_\ell \mid X(s)=x\sim \mathrm{Exp}\left(\lambda_\ell(x)\right)$. This implies that the time $\tau=\min_{\ell\in\scr{R}}\tau_\ell$ that the first reaction occurs satisfies $\tau\mid X(s)=x \sim\mathrm{Exp}\left(\sum_{\ell\in\scr{R}} \lambda_\ell(x)\right)$. \\
	
	If the reaction times are inhomogeneous in time and \Cref{ass:AssumptionsOnProcess} is satisfied, similarly the time for the first reaction to occur has distribution \eqref{eq:JumpDistribution-Inhomogeneous} with intensity function $\sum_{\ell\in\scr{R}}\lambda_\ell$. Moreover, the probability distribution of the first reaction in any subset $R\subseteq\scr{R}$ also satisfies \eqref{eq:JumpDistribution-Inhomogeneous} with intensity function $\sum_{\ell\in R}\lambda_\ell$.
	
	\subsection{Chemical master equation and chemical Langevin equation}
	Denote the transition probabilities of \eqref{eq:FormOfX} by $p$. That is $p(s,x;t,y) = \bb{P}\left(X(t)=y\mid X(s)=x\right)$. The Kolmogorov forward equation yields the \textit{chemical master equation} given by 
	\[ \partial_t p(s,x;t,y) = \sum_{\ell\in\scr{R}}\lambda_\ell(t,y-\xi_\ell)p(s,x;t,y-\xi_\ell) - \sum_{\ell\in\scr{R}} \lambda_\ell(t,y)p(s,x;t,y), \]
	with initial condition $p(s,x;s,y)=1\{y=x\}$. It is well-known, see e.g. \cite{li2020} that the chemical reaction process can be approximated by  solutions to the \textit{Chemical Langevin Equation (CLE)}, which is the SDE 
	\begin{equation}
		\label{eq:CLE}
		\dd Y(t) = b_{\mathrm{CLE}}(t,Y(t)) \dd t + \sigma_{\mathrm{CLE}}(t,Y(t)) \dd W_\ell(t),\qquad Y(0)=x_0
	\end{equation}
	where (assuming the reactions to be numbered $1,\ldots,{B}$) 	\begin{equation}\label{eq:cle_drift_diffusion} \begin{split}
	b_{\mathrm{CLE}}(t,x)&= \sum_{\ell=1}^{B} \lambda_\ell(t,x)\xi_\ell \\   \sigma_{\mathrm{CLE}}(t,x) &= \begin{bmatrix}
		 \xi_1 & \hdots & \xi_{B}
	\end{bmatrix} \sqrt{\mbox{diag}(\lambda_1(t,x),\hdots, \lambda_{{B}}(t,x))}  
	\end{split}
	\end{equation} and 
		 $W$ is an independent $\bb{R}^{B}$-valued Brownian motion.

	\subsection{Examples}
	\begin{ex}[Pure death process]
		\label{ex:deathprocess}
		Our simplest example models a population of initial size $x_0$ in which an individual dies in a time interval $(t,t+\Delta)$ with probability $c\Delta + o(\Delta)$ for some constant $c>0$ and $\Delta$ small. Such a process is modelled as chemical reaction process with just one specie and one reaction, namely {$\lambda_1\colon (t,x)\mapsto cx$ with $\xi_1 = -1$}. 
		\end{ex}

	\label{subsec:introduction-examples}
	\begin{ex}[Gene Transcription and Translation \textit{(GTT)}]
		\label{ex:GTT}
			A stochastic model for the process in which information is encoded in DNA and transferred to mRNA is described in Section 2.1.1 of \cite{anderson2015stochastic}. The basic model considers the three species \textit{Gene (G)}, \textit{mRNA (M)} and \textit{Protein (P)} in the set $\scr{S} = \{ G, M, P\}$. We consider four reactions. 
			\begin{enumerate}
				\item \textbf{Transcription: }$G\to G+M$ with rate constant $\kappa_1$. 
				\item \textbf{Translation: }$M\to M+P$ with rate constant $\kappa_2>0$. 
				\item \textbf{Degradation of mRNA: }$M\to\emptyset$ with rate constant $d_M>0$. 
				\item \textbf{Degradation of protein: }$P\to\emptyset$ with rate constant $d_P>0$.
			\end{enumerate}
Let $X(t)$ be the count vector at time $t$ of species counts $(G, M, P)$. 
			In this example $\eqref{eq:FormOfX}$ translates to 
			\begin{equation} 
				\label{eq:GTT-formofX}
				\begin{aligned}
					X(t) &= x_0 + Y_1\left(\int_0^t \kappa_1 X_{1}(s)\dd s\right) \Bm 0 \\ 1 \\ 0 \Em +  	Y_2\left(\int_0^t \kappa_2 X_{2}(s)\dd s\right) \Bm 0 \\ 0 \\ 1 \Em \\
					& \quad + Y_3\left(\int_0^t d_M X_{2}(s)\dd s\right) \Bm 0 \\ -1 \\ 0 \Em + Y_4\left(\int_0^t d_P X_{3}(s)\dd s\right) \Bm 0 \\ 0 \\ -1 \Em
				\end{aligned},  
			 \end{equation}
		where $Y_1,Y_2,Y_3,Y_4$ are independent unit rate Poisson processes. A realization of this process can be found in \Cref{fig:GTT-forwardsimulation}. 
		
		\begin{figure}[h]
			\centering
			\includegraphics[width = 0.8 \textwidth]{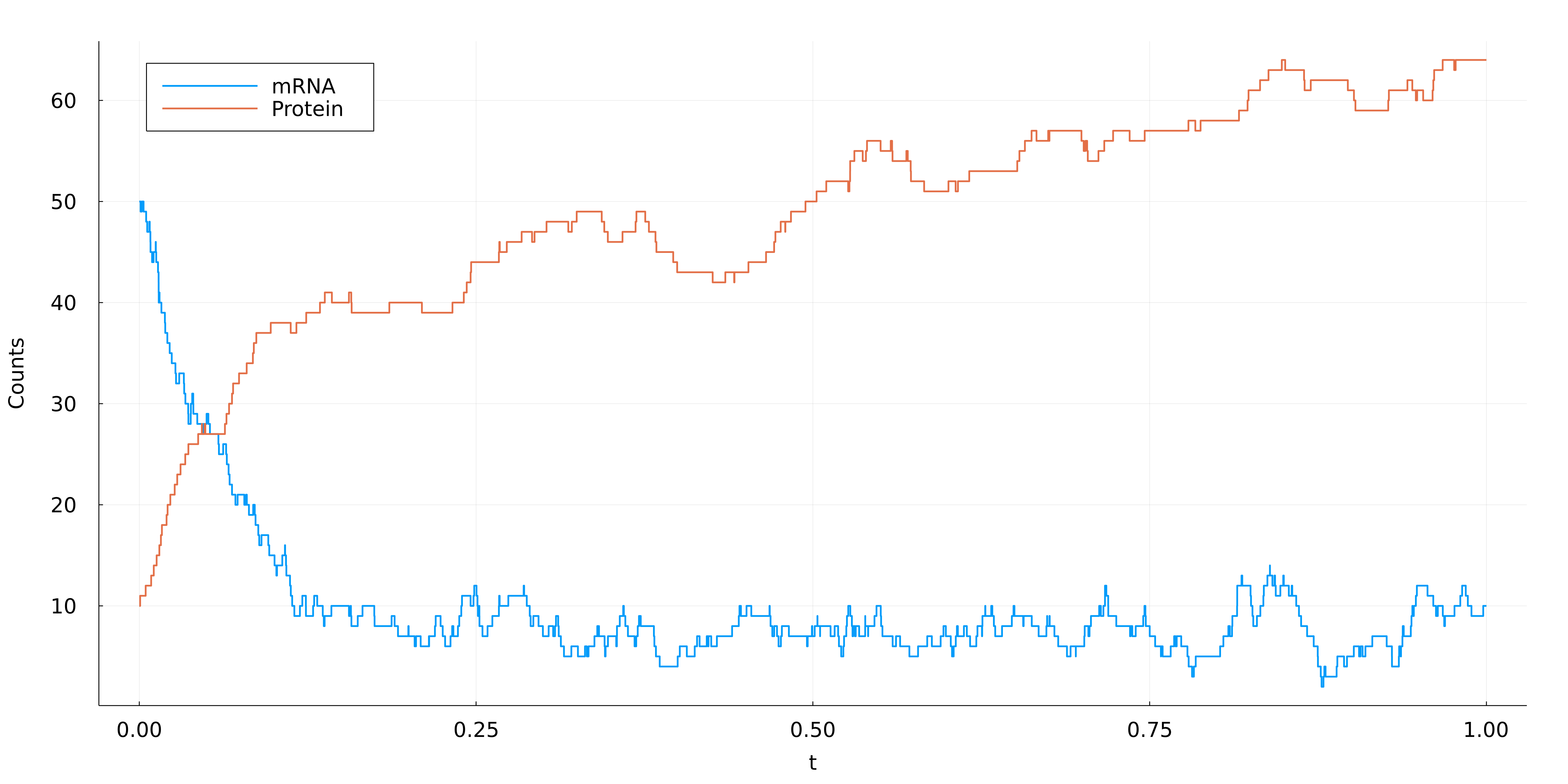}
			\caption{Realization of \eqref{eq:GTT-formofX} using $\kappa_1 = 200$, $\kappa_2 = 10$, $d_M = 25$, $d_P = 1$ and initial position $x_0 = (1, 50, 10)$. Note that the gene count is constant in this process. Therefore it was omitted from the figure.}
			\label{fig:GTT-forwardsimulation}
		\end{figure}
	\end{ex}
	
		\begin{ex}[Enzyme kinetics]
		\label{ex:Enzyme-kinetics}
		The standard model for describing enzyme kinetics, see e.g. \cite{bersani2008}, where a substrate binds an enzyme  reversibly to form an enzyme-substrate complex, which can in turn deteriorate into an enzyme and a product. We thus model  \textit{Substrate (S)}, \textit{Enzyme (E)}, \textit{Enzyme-substrate (SE)} and \textit{Product (P)} in the species set $\scr{S} = \left\{S, E, SE, P\right\}$ and consider of the following reactions. 
		\begin{equation}
			\label{eq:enzyme-kinetics-reactions}
			S+E \overset{\kappa_1}{\underset{\kappa_2}{\rightleftharpoons}} SE \overset{\kappa_3}{\longrightarrow}P+E
		\end{equation}
		Equivalently: 
		\begin{enumerate}
			\item $S+E \to SE$ with rate constant $\kappa_1$. 
			\item $SE \to S+E$ with rate constant $\kappa_2$. 
			\item $SE\to P+E$ with rate constant $\kappa_3$. 
		\end{enumerate}
		Then the reaction rates corresponding to the above listed three reactions are given by 
		\begin{enumerate}
			\item $\lambda_1(x)=\kappa_1 x_1x_2$ and $\xi_1 = (-1,-1,1,0)$. 
			\item $\lambda_2(x) = \kappa_2 x_3$ and $\xi_2 = (1,1,-1,0)$. 
			\item $\lambda_3(x) = \kappa_3 x_3$ and $\xi_3 = (0,1,-1,1)$. 
		\end{enumerate}
							Here $(x_1, x_2, x_3, x_4)$ refer to species counts of $(S, E, SE, P)$. 
		Interesting aspects of this example are firstly that the fourth component ($P$) only appears in reaction ($3$) where $1$ is added and therefore is monotonically increasing and secondly that there are {absorbing} states such as $x= (0, x_2, 0, x_4)$ where the process is killed as all reaction rates are zero. A realization of this process can be found in \Cref{fig:enzyme-kinetics-forwardsimulation}. 
	
	\begin{figure}[h]
		\centering
		\includegraphics[width = 0.8 \textwidth]{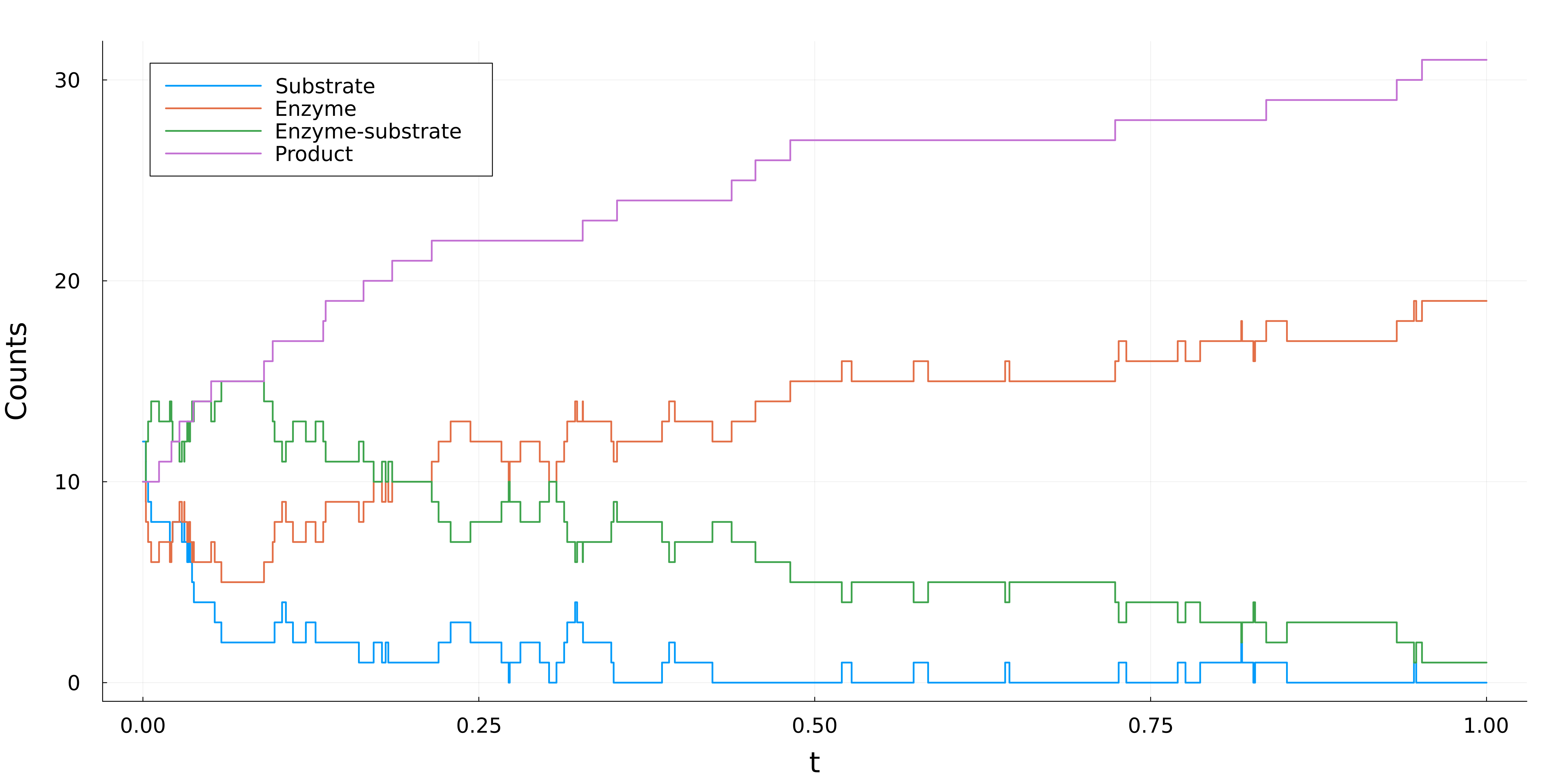}
		\caption{Realization of the forward process of \Cref{ex:Enzyme-kinetics} using $\kappa_1 = 5$, $\kappa_2 = 5$, $\kappa_3=3$ and initial position $x_0 = (12, 10, 10, 10)$. }
		\label{fig:enzyme-kinetics-forwardsimulation}
	\end{figure}
	\end{ex}

	\section{Guided Markov processes}
	\label{sec:doobh}
	
	\subsection{First ideas}
		{We first provide some intuition on how the dynamics of a chemical reaction network change upon conditioning on a future event. Let $\mathcal{E}$ denote  some event later than time $t$, for example $\{X(T)=v\}$. 
		For $t \in [0,T)$, $\Delta >0$ and $x\in \bb{S}$ and $y=x+\xi_\ell$, 
		\[ \begin{aligned}
		&\bb{P}\left(X(t+\Delta)	=y\mid X(t)=x, \mathcal{E}\right) = \frac{\bb{P}\left(X(t+\Delta)=y,\, \mathcal{E} \mid X(t)=x\right)}{\bb{P}\left(\mathcal{E}\mid X(t)=x\right)}\\ 
		&\qquad\qquad \qquad= \frac{\bb{P}\left(\mathcal{E}\mid X(t+\Delta)=y,\,X(t)=x\right)\bb{P}\left(X(t+\Delta)=y\mid X(t)=x\right)}{\bb{P}\left(\mathcal{E}\mid X(t)=x\right)} \\
		&\qquad \qquad\qquad= \bb{P}\left(X(t+\Delta)=y\mid X(t)=x\right) \frac{\bb{P}\left(\mathcal{E}\mid X(t+\Delta)=y\right)}{\bb{P}\left(\mathcal{E}\mid X(t)=x\right)} 
		\end{aligned}\]
		Taking the limit $\Delta \downarrow 0$, this suggests that  if $X$ is conditioned on the event $\mathcal{E}$ then  it  is a chemical reaction process  with adjusted intensities 
		\begin{equation}
		\label{eq:transformed-intensities}
			\lambda_\ell^h(t,x)  = \lambda_\ell(t,x)\frac{h(t,x+\xi_\ell)}{h(t,x)}, 
		\end{equation}
		where  $h(t,x) = \bb{P}\left(\mathcal{E}\mid X(t)=x\right)$. 
				The process with intensities $\lambda_\ell^h(t,x)$ has generator
		\begin{equation}\label{eq:generatorunderchangeofmeasure} 
			\scr{L}^h_t f(x) = \sum_{\ell\in\mathcal{R}} \lambda_\ell^h(t,x)\left(f(x+\xi_\ell)-f(x)\right) .
		\end{equation}
		
		To sample the conditioned process, the function $h$ is required which is rarely available in closed form. The general idea behind the construction of what we call a  guided chemical reaction process is to replace $h$ by a suitable tractable substitute $g$. This gives a process with rates $\lambda_t^g$. Discrepancies between the true conditioned process and the guided process can be accounted for by evaluating the likelihood ratio of their measures on path space. In this section we will establish sufficient conditions on $g$ for this approach to be valid. \\
		
	As shown below, conditioning the process corresponds to a change of measure. To see this connection, a direct computation  shows that  $\scr{L}^h_t f$ as defined in \eqref{eq:generatorunderchangeofmeasure} can be expressed in terms of $\scr{L}_t$:
		\begin{equation}
			\label{eq:Doobh-generator} 
			\mathcal{L}_t^h f(x) = \frac{1}{h(t,x)} \left[ \scr{L}_t (fh)(t,x) - f(x)\scr{L}_th(t,x)\right] .
		\end{equation}
		This operator is strongly connected to exponential changes of measure studied in \cite{palmowski2002} and \cite{corstanje2021conditioning} for a wider class of Markov processes. 

\subsection{Guiding by a change of measure}	
{ We  assume $x_0$ to be known and that {$\mathcal{E}=\{LX(T)=v\}$} is observed {for some known $v\in\bb{R}^m$} and a matrix $L\in\bb{R}^{m\times d}$ of full row rank with $m\leq d$. In case $m=d$, we assume without loss of generality that $L=I$. Additionally, we assume $\mathbb{P}(LX(T)=v \mid X(0)=x_0)>0$. }
 The extension to multiple observations will follow in a straightforward way in later sections.  Let  $\scr{A}$ denote  the infinitesimal generator of the space-time process $(t,X(t))$. That is $(\scr{A}g)(s,x)=\lim_{\Delta\downarrow 0} \Delta^{-1} \EE\left[ g(s+\Delta, X(s+\Delta)) - g(s,X(s)) \mid X(s) = x\right]$, for those $g$ (that map to $\mathbb{R}$) for which the limit exists. Such $g$ are said to be in the domain of the generator, denoted by $\scr{D}(\scr{A})$. While $\scr{D}(\scr{A})$ is implicitly defined, as part of the definition of $\scr{A}$, we can find a more descriptive description on a smaller set of functions. For those functions $\scr{A} = \partial_t + \scr{L}$, where $\scr{L}$ is defined in   
 \eqref{eq:Generator}. From this, one can see that for instance functions $g$ that are differentiable with respect to their time-argument will typically be in the domain.

	For a suitable $g\colon [0,T) \times \bb{S} \to \bb{R}$  define 
	\begin{equation}
		\label{eq:doobh}
		E^g(t) = \frac{g(t, X(t))}{g(0,x_0)}\exp\left( -\int_{0}^t \frac{\scr{A}g}{g}(s, X(s))\dd s \right), \qquad t\in [0,T).
	\end{equation}

\begin{defn}
We write $g\in \scr{G}$ if $g\in\scr{D}(\scr{A})$ is a strictly positive function and there exists a filtration $(\scr{F}_t)_t$ such that $E^g(t),\, t\in [0,T)$ is a martingale adapted to $(\scr{F}_t)_t$. 
\end{defn}

As $(\scr{F}_t^X)_t = \sigma(X(s) : s\leq t)$ is a sub-$\sigma$-algebra of $(\scr{F}_t)_t$,  $E^g(t)$ is a martingale with respect to $(\scr{F}_t^X)_t$ as well whenever $g\in\scr{G}$. In \Cref{lem:martingale-lemma}, we give an explicit condition under which $g\in\scr{G}$. \\

	Let $g\in\scr G$ and let $\bb{P}_t$ denote the law of the process $X$ restricted to $\scr{F}_t^X$. We define a new collection of measures $\{\bb{P}_t^g : t\in[0,T)\}$ by defining $\bb{P}_t$ on $(\Omega,\scr{F}_t^X)$ by 
		\begin{equation}
			\label{eq:DefPstar}
			\dd \bb{P}_t^g =  E^g(t) \dd\bb{P}_t, \qquad t\in [0,T).
		\end{equation}
By {Theorem 4.2} of \cite{palmowski2002}, $f(t,X(t)) - \int_0^t \left( \partial_t f+ \scr{L}^g_t f \right)(s,X(s)) \dd s$ is a martingale under $\bb{P}^g_t$ for  any function $f(t,x)$ finitely supported in  $x$ and differentiable in $t$. Hence, under $\mathbb{P}^g_t$, $X$ is a chemical reaction process with the same jumps as under $\bb{P}_t$ but with adjusted  intensities $\lambda^g_\ell(t,x) = 
\lambda(t,x) g(t,x+\xi_\ell)/g(t,x)$. For the reader's convenience we have summarised some of the main arguments of \cite{palmowski2002} for establishing this connection in Appendix \ref{sec:change of generator}.

\begin{defn}
The \emph{guided} process induced by $g$ on $[0,t]$ is defined as the process $X$ under $\mathbb{P}^g_t$. 
\end{defn}
Some authors refer to this process as the \emph{twisted} process (see e.g.\ \cite{del2017stochastic}). 
Upon denoting the transition probabilities of $X$ by $p$, i.e. $\bb{P}(X(T) \in A\mid X(t) =x) =\sum_{y\in A} p(t,x; T,y)$ for $A\subseteq \bb{S}$, it follows from Example 2.4 of \cite{corstanje2021conditioning} that the conditioned process $\left(X\mid LX(T)=v\right)$ is obtained by taking $g=h$ with 
\begin{equation} 
	\label{eq:defh-single}
	h(t,x) = \sum_{\zeta \in L^{-1} v} p(t,x; T, \zeta),
\end{equation}
where $L^{-1}v = \{ y\in\bb{S} : Ly=v \}$. We recall our assumption $h(0,x_0)>0$, {as otherwise conditioned paths have probability zero}. Note that $h$ is bounded and  satisfies Kolmogorov's backward equation: $\scr{A} h=0$. By  Proposition 3.2 in \cite{palmowski2002}, $h \in \scr{G}$ and thus we can define the probability measure $\bb{P}^h_t$ by 
\[  \dd \bb{P}_t^h = \frac{h(t,X(t))}{h(0,x_0)} \dd \bb{P}_t,\qquad {t \leq T}.\]
Intuitively, $\bb{P}_t^h$ gives more mass to paths where $h(t, X(t))$ is large. 

Unfortunately, $h$ is intractable as the transition probabilities $p$ are only known in closed-form in very specific cases including \Cref{ex:deathprocess}. 

To resolve this, consider another function $g\in \scr{G}$ that acts as a tractable substitute for $h$. Then, as a consequence, the process $X$ is tractable and can be simulated under the measure $\bb{P}_t^{g}$. Moreover, for $t<T$, 
	\begin{equation}		\label{eq:AbsoluteContinuity}
		\der{\bb{P}^h_t}{\bb{P}^{g}_t}(X) = \frac{E_t^h}{E_t^{g}} = \frac{h(t,X(t))}{g(t,X(t))}\frac{g(0,x_0)}{h(0,x_0)}\Psi_t^{g}(X), 
	\end{equation}
	where
	\begin{equation}\label{eq:Psi} \Psi_t^{g}(X) = \exp\left(\int_0^t \frac{\scr{A}g}{g}(s,X(s))\dd s\right).  \end{equation}
To evaluate $\Psi^g$,  a direct computation yields 
\begin{equation} 	
	\label{eq:Ah/h}\frac{\scr{A}g}{g}(s,x) = \partial_s \log g(s,x) + \sum_{\ell\in\scr{R}} \left(\lambda_\ell^{g}(s,x) - \lambda_\ell(s,x)\right). 
\end{equation}
\Cref{eq:AbsoluteContinuity} becomes particularly useful if it can be evaluated in $t=T$ as well. Since $h(T,x) = \ind_{\{Lx=v\}}$ is defined in $T$ this only depends on the choice for $g$. We present a class of functions that can be used for this purpose in \Cref{thm:absolute-continuity-general}.

\begin{lem}
\label{lem:martingale-lemma}
	Suppose $g\in\scr{D}(\scr{A})$ is a strictly positive function such that for some positive constant $C$ 

	\[ \int_0^T \sum_{\ell\in\scr{R}} \lambda_\ell(s,X(s))\left(\frac{g(s,X(s)+\xi_\ell)}{g(s,X(s))}-1\right)^2 \dd s < C,\]
$\mathbb{P}$-almost surely, then 	
	 $g\in\scr{G}$. 
\end{lem}
\begin{proof} The proof is inspired by the proof given in Example 15.2.10 in \cite{bremaud2020probability} and Lemma 19.6
in \cite{liptser2013statistics}.

	It suffices to show that $\bb{E}\left[E^g(T)\right]=1$. By Lemma 3.1 of \cite{palmowski2002}, $E^g$ is a local martingale. Let $\left\{\sigma_n\right\}_n$ be a localizing sequence for $E^g$. Then, if $(E^g(t),\, t\in [0,T])$ were uniformly integrable, then 
	\[ \bb{E}\left[E^g(T)\right] =  \lim_{n\to\infty}\bb{E}\left[E^g(T\wedge\sigma_n)\right] = \bb{E}\left[E^g(0)\right]=1\]
	 We proceed to show uniform integrability. Given $t\in[0,T]$, suppose a trajectory $X$ has jumps at times $t_1,\dots,t_N$.  Set $t_0=0$ and $t_{N+1}=t$. Then 
	 	\begin{equation} 
	\label{eq:afleiding-Eg(T)}
	\begin{aligned}
 	E^g(t) &= \frac{g(t,X(t))}{g(0,x_0)}\exp\left( -\sum_{j=0}^N\int_{t_j}^{t_{j+1}}\partial_s \log g(s, X(t_j))\dd s - \int_0^t \frac{\scr{L}g(s,X(s))}{g(s,X(s))}\dd s \right) \\
 	&= \frac{g(t,X(t))}{g(0,x_0)}\prod_{j=0}^N\frac{g(t_j,X(t_j))}{g(t_{j+1},X(t_j))} \exp\left(-\int_0^t\frac{\scr{L}g(s,X(s))}{g(s,X(s))}\dd s \right) \\
 	&= \prod_{j=0}^{N-1}\frac{g(t_{j+1},X(t_{j+1}))}{g(t_{j+1},X(t_j))} \exp\left( -\int_0^t \frac{\scr{L}g(s,X(s))}{g(s,X(s))}\dd s\right)\\
 	&= \prod_{j=0}^{N-1}\frac{g(t_{j+1},X(t_{j+1}))}{g(t_{j+1},X(t_j))} \exp\left( -\int_0^t \sum_{\ell\in\scr{R}} \lambda_\ell(s,X(s))\left(\frac{g(s,X(s)+\xi_\ell)}{g(s,X(s))}-1\right) \dd s\right)
 \end{aligned}
 \end{equation}
From \eqref{eq:afleiding-Eg(T)}, it is easy to verify that 
\[ E^g(t)^2 = E^{g^2}(t)\exp\left(  \int_0^t \sum_{\ell\in\scr{R}} \lambda_\ell(s,X(s))\left(\frac{g(s,X(s)+\xi_\ell)}{g(s,X(s))} -1 \right)^2 \dd s \right) \]
By Lemma 3.1 of \cite{palmowski2002}, $E^{g^2}$ is a local martingale and, since it is bounded from below, it is a supermartingale with $\bb{E}\left[E^{g^2}(t)\right]\leq 1$. Hence, for all $n$,
\[\begin{aligned} 
 &\mathbb{E}\left[E^g(T\wedge\sigma_n)^2\right]  \\
 &\quad \leq \bb{E}\left[ E^{g^2}(T\wedge\sigma_n)\exp\left(  \int_0^T \sum_{\ell\in\scr{R}} \lambda_\ell(s,X(s))\left(\frac{g(s,X(s)+\xi_\ell)}{g(s,X(s))} -1 \right)^2 \dd s \right)\right]	
  \leq e^C
\end{aligned}
\]
Therefore $\sup_n \mathbb{E}\left[E^g(T\wedge\sigma_n)^2\right] <\infty$. 
\end{proof}

\begin{thm}
	\label{thm:absolute-continuity-general}
	Define $h$ is as in \eqref{eq:defh-single} and let $g\colon[0,T]\times\bb{S}\to\bb{R}$ be such that the condition of \Cref{lem:martingale-lemma} is satisfied. Then 
	\begin{equation}\label{eq:radon-nikodym-abstract} 
		\frac{\dd\bb{P}^h_T}{\dd\bb{P}^g_T} = \frac{g(0,x_0)}{h(0,x_0)}\frac{\Psi_T^g(X)}{g(T,X(T)}\ind_{\{LX(T)=v\}}. 
	\end{equation} 
\end{thm}
\begin{proof}
	The form of the Radon-Nikodym derivative follows from \eqref{eq:AbsoluteContinuity} upon noting that for $t\uparrow T$, 
\[
	 h(t,X(t)) = \bb{E}\left[ \ind\{ LX(T)=v\} \mid \scr{F}_t\right] \to \bb{E}\left[ \ind\{ LX(T)=v\} \mid \scr{F}_T\right] = \ind\{LX(T)=v\}.
\]
\end{proof}
The following proposition is often helpful in establishing that the condition of \Cref{lem:martingale-lemma} is satisfied for a given $g$. 

\begin{prop}
\label{prop:upper-lower-bounds-g}
	Suppose  $g\colon[0,T]\times\bb{S}\to\bb{R}$ is such that for all $x$ the map $t\mapsto g(t,x)$ is bounded from above and bounded away from zero. Then the condition of \Cref{lem:martingale-lemma} is satisfied.
\end{prop}
\begin{proof}
	Since we assume throughout $X$ to be nonexplosive, we have that almost surely the maps $t\mapsto g(t,X(t)+\xi_\ell)/g(t,X(t))$ are almost surely bounded on $[0,T]$ for all $\ell\in\scr{R}$. The result now follows upon noting that $\int_0^T \sum_{\ell\in\scr{R}} \lambda_\ell(s,X(s))\dd s <\infty$ by Assumption \ref{ass:AssumptionsOnProcess-FiniteSum}.  
\end{proof}

\begin{cor}
\label{corr:likelihood}
The likelihood of $x_0$ based on the observation $v$ is given by
\begin{equation}
\label{eq:likelihood_abstract}
	h(0,x_0) = g(0,x_0) \bb{E}_T^{g}\left[ \frac{\Psi_T^{{g}}(X)}{g(T,X(T)}\ind_{\{LX(T)=v\}}\right]. 
\end{equation}

\end{cor}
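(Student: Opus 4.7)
The proof is a direct consequence of Theorem \ref{thm:absolute-continuity-general}, so the plan is very short. Since $\bb{P}_T^h$ is a probability measure that is absolutely continuous with respect to $\bb{P}_T^g$, the Radon--Nikodym derivative in \eqref{eq:radon-nikodym-abstract} integrates to $1$ under $\bb{P}_T^g$. That is, I would start from
\[
1 = \bb{E}_T^g\!\left[\der{\bb{P}_T^h}{\bb{P}_T^g}(X)\right]
  = \bb{E}_T^g\!\left[\frac{g(0,x_0)}{h(0,x_0)}\,\frac{\Psi_T(X)}{g(T,X(T-))}\,\ind_{\{LX(T)=v\}}\right].
\]
The ratio $g(0,x_0)/h(0,x_0)$ is a deterministic constant (using $h(0,x_0)>0$ by the standing assumption after \eqref{eq:defh-single}), so it can be pulled out of the expectation. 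Solving for $h(0,x_0)$ yields \eqref{eq:likelihood_abstract}.

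The only point that would need a brief justification is the interpretation of $h(0,x_0)$ as the likelihood of $x_0$ given the observation $v$. This follows from the definition \eqref{eq:defh-single} at $t=0$:
\[
h(0,x_0) = \sum_{\zeta\in L^{-1}v} p(0,x_0;T,\zeta) = \bb{P}\bigl(LX(T)=v \mid X(0)=x_0\bigr),
\]
which is exactly the probability that the observation equals $v$ given the initial state $x_0$.

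No real obstacle arises here: the hypotheses needed for \eqref{eq:radon-nikodym-abstract} (boundedness of $\partial_t\log g$ and of $\sum_\ell \lambda_\ell^g$, together with $g\in\scr{G}$) are already imposed in Theorem \ref{thm:absolute-continuity-general}, and both the finiteness of $\Psi_T$ and the $\bb{P}^g$-a.s.\ existence of $X(T-)$ were established there. Thus the corollary requires nothing beyond taking expectations of the Radon--Nikodym identity and rearranging.
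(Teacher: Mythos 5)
Your proposal is correct and matches the paper's own argument, which likewise obtains \eqref{eq:likelihood_abstract} by integrating the Radon--Nikodym derivative \eqref{eq:radon-nikodym-abstract} with respect to $\bb{P}^{g}$ and using that it integrates to $1$. The added remark identifying $h(0,x_0)$ with $\bb{P}(LX(T)=v\mid X(0)=x_0)$ is a sensible, if implicit in the paper, clarification.
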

\begin{proof}
	This follows immediately upon integrating \eqref{eq:radon-nikodym-abstract} with respect to $\bb{P}^{g}$. 
\end{proof}

Note that  $g\equiv 1$ satisfies the condition in \Cref{lem:martingale-lemma}.  This choice simply yields the original forward process. Sampling the conditioned process in this way  is however very inefficient when $\bb{P}\left(LX(T)=v\right)$ is low. \\

The lemma below shows that the law of the guided process does not change when $g$ is multiplied by a function only depending on time. 
\begin{lem}[Invariance under time scaling]
	\label{lem:time-scaling}  
	Suppose $g\in\scr{G}$ and let $c\colon [0,T]\to\bb{R}_+$ be a differentiable function. Then $E^{cg}(t) = E^{g}(t)$.
\end{lem}
\begin{proof}
	Since $c$ only depends on time
	\[ \begin{aligned} 
		\int_0^t \frac{\scr{A}\left(cg\right) }{cg}(s,X(s))\dd s &= \int_0^t \left[ \partial_s\log\left( c(s)g(s,X(s))\right) + \frac{\scr{L}_s\left(cg\right)}{cg}(s,X(s)) \right]  \dd s \\  
		&= \int_0^t \partial_s \log c(s)\dd s + \int_0^t \left[ \partial_s\log g(s,X(s)) + \frac{\scr{L}_sg}{g}(s,X(s))\right] \dd s \\ 
		&= \log c(t)- \log c(0) + \int_0^t \frac{\scr{A}g}{g}(s,X(s)) \dd s.
	\end{aligned}. \] From this, we get $E^{cg}(t)$ by negating the right-hand-side, taking the exponent and subsequently multiplying by $c(t) g(t,X(t)) /(c(0)g(0,x_0))$. It is easily seen that the terms with $c$ cancel out.
\end{proof}
The following proposition is useful for numerical evaluation of the likelihood of a sampled guided process. 
\begin{prop}
Suppose $0=t_0 < t_1 < \cdots <t_{N-1}<t_N=T$, $x_0,\dots, x_N\in\bb{S}$ and 
\[ x(t) = \sum_{j=0}^{N-1} x_j \ind_{[t_j,t_{j+1})}(t). \]
Then, with $\alpha_j(s):=g(s,x_{j+1})/g(s,x_j)$
\begin{equation*}
\begin{split} 
& \frac{g(0,x_0)}{g(T, x(T))} \exp\left(\int_0^T \frac{\scr{A}g}{g}(s,x(s))\dd s \right) \\ &\qquad 
=  \left( \prod_{j=0}^{N-2} \frac{1}{\alpha_j(t_{j+1})} \right)\exp\left( \sum_{j=0}^{N-1} \int_{t_j}^{t_{j+1}}  \sum_{\ell \in\scr{R}} \lambda_\ell(s, x_j)\left[\alpha_j(s) - 1\right] \dd s
\right)
\end{split}
\end{equation*}

\end{prop}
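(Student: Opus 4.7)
The plan is to do a direct calculation that expands $\scr{A}g/g$ via equation \eqref{eq:Ah/h}, uses the piecewise-constant structure of $x(\cdot)$ to reduce the integral to a sum of boundary-log contributions plus the claimed reaction part, and then exploits telescoping to recover the product over jumps.

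First I would write
\[
\int_0^T \frac{\scr{A}g}{g}(s,x(s))\,\dd s = \sum_{j=0}^{N-1}\int_{t_j}^{t_{j+1}}\frac{\scr{A}g}{g}(s,x_j)\,\dd s,
\]
using that $x(s)\equiv x_j$ on $[t_j,t_{j+1})$, and apply \eqref{eq:Ah/h} on each subinterval. The time-derivative piece integrates exactly to $\log g(t_{j+1},x_j)-\log g(t_j,x_j)$, while the reaction piece $\int_{t_j}^{t_{j+1}}\sum_\ell \lambda_\ell(s,x_j)[\alpha^g_\ell(s,x_j)-1]\,\dd s$ already matches the form of the inner summand inside the exponential on the right-hand side.

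Next I would collect and rearrange the boundary logarithms. Summing over $j$, the two surviving endpoint contributions are $-\log g(0,x_0)$ and $\log g(T,x_{N-1})=\log g(T,x(T-))$, and the remaining inner terms can be re-paired as
\[
\sum_{j=0}^{N-2}\bigl[\log g(t_{j+1},x_j)-\log g(t_{j+1},x_{j+1})\bigr] \;=\; -\sum_{j=0}^{N-2}\log\alpha_j(t_{j+1}),
\]
using the definition $\alpha_j(s)=g(s,x_{j+1})/g(s,x_j)$ evaluated at the jump time $t_{j+1}$. Multiplying by the prefactor $g(0,x_0)/g(T,x(T-))$ and exponentiating then cancels the two endpoint logarithms exactly and turns the remaining sum into the product $\prod_{j=0}^{N-2}1/\alpha_j(t_{j+1})$.

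There is no real analytic obstacle: the statement is an algebraic identity once \eqref{eq:Ah/h} is substituted, and the only care needed is tracking indices in the telescoping step so that the pairings $\log g(t_{j+1},x_j)-\log g(t_{j+1},x_{j+1})$ run over $j=0,\dots,N-2$ and match the index range of the claimed product. I would therefore present the proof as the chain of equalities outlined above, with the reaction-piece integrals deliberately kept unevaluated since they already appear in their final form on the right-hand side.
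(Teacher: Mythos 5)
Your proof is correct and follows essentially the same route as the paper's: split the integral over the subintervals where $x(\cdot)$ is constant, integrate the $\partial_s\log g$ term exactly so that it telescopes into $g(T,x_{N-1})/g(0,x_0)$ times the jump-time ratios, and leave the reaction integrals unevaluated. (Both you and the paper pass from the generator ratio $g(s,x_j+\xi_\ell)/g(s,x_j)$ appearing in \eqref{eq:Ah/h} to the stated $\alpha_j(s)=g(s,x_{j+1})/g(s,x_j)$ without comment; that conflation sits in the proposition's own notation, so it is not a defect of your argument relative to the paper's.)
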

\begin{proof}
This is  a consequence of \eqref{eq:afleiding-Eg(T)}.	
\end{proof}

\section{Choices for $g$}

\label{sec:choices_for_g}
\label{subsec:guiding_term_scaled_diffusion}
It remains to specify the maps $(t,x) \mapsto g(t,x)$ satisfying the assumption of \Cref{lem:martingale-lemma}. Moreover, to be of any use,  the event $\{LX(T)=v\}$ needs to get positive probability under $\bb{P}^g_T$.

In this section we show the following:
				\begin{itemize}
			\item Section \ref{subsec:comparison}: The choices for $g$ in \cite{fearnhead2008computational} and \cite{golightly2019efficient}, both based on the chemical Langevin equation, lead to absolute continuity. 
			\item Section \ref{subsec:scaled}: A  guiding function based on the transition density of a scaled  Brownian motion  yields absolute continuity while being computationally very efficient. It satisfies $\bb{P}^g_T(LX(T)=v)>0$. In subsection \ref{subsubsec:multiple} we show how $g$ can be extended to the case of multiple partial observations.
			\item Section \ref{sec:poisson}: For processes with monotone components a specific choice for $g$ can overcome certain problems arising in the aforementioned choices.
				\end{itemize}
			 	In the following, $C$ denotes a positive definite matrix.

\subsection{Choices based on the chemical Langevin equation}
\label{subsec:comparison}
Consider the CLE given in  \eqref{eq:CLE}. Set  $a_{\mathrm{CLE}}={\sigma_{\mathrm{CLE}}}{\sigma_{\mathrm{CLE}}}^\T$. 
 \cite{fearnhead2008computational} proposed to choose $g$ based on the  Euler discretization of the CLE. This gives 
\begin{equation}\label{eq:g_fearnhead}
 g_{\rm F}(t,x) = \scr{N}\left(v ; L(x+b_{\mathrm{CLE}}(t,x)(T-t)) , La_{\mathrm{CLE}}(t,x)L^\T(T-t) + C\right),	
\end{equation}
As the assumption of \Cref{prop:upper-lower-bounds-g} is satisfied we obtain  $\bb{P}^h_T \ll \bb{P}^{g_{\mathrm{F}}}_T$.\\

\cite{golightly2019efficient} propose to infer $g$ by using the Linear Noise Approximation (LNA) to the CLE. Here, we discuss their specific choice  called ``LNA with restart''. 
	For notational convenience, write $b=b_{\mathrm{CLE}}$ and $\sigma=\sigma_{\mathrm{CLE}}$ {and again set $a=\sigma\sigma^\T$}. 	
 Given $t<T$ and $x\in\bb{S}$, we denote by $z_{T\mid (t,x)}$ and $V_{T\mid (t,x)}$ the solutions at time $T$ of the system of ordinary differential equations 	 \begin{equation*}
	 	\begin{aligned}
\dd z_{s\mid (t,x)} &= {b}(s, z_{s\mid (t,x)}) \dd s \\
\dd V_{s\mid (t,x)} &=\left( V_{s\mid(t,x)}\left( J_{b}(s, z_{s\mid(t,x)})\right)^\T  + J_{b}\left(s,z_{s\mid(t,x)}\right) V_{s\mid(t,x)}+{a}\left(s, z_{s\mid(t,x)}\right)\right) \dd s 
	 	\end{aligned}
	 \end{equation*} 
where $s \in [t,T]$,	 subject to the initial conditions are given by $z_{t\mid (t,x)} = x$ and $V_{t\mid(t,x)} = 0$.  
	Here, $J_{b}$ denotes the Jacobian matrix of ${b}$ (having component $(i,j)$ given by $\partial b_i / \partial z_j$. The guiding term then is given by 
	\begin{equation}
	\label{eq:guiding_term_LNA} 
	g_{\mathrm{LNAR}}(t,x) = \scr{N}\left( v ; L z_{T\mid (t,x)} , LV_{T\mid (t,x)}L^\T +C\right). 
	\end{equation}
	Absolute continuity  $\mathbb{P}^h_T \ll \mathbb{P}^{g_{\mathrm{LNAR}}}_T$  follows by the same argument used for $g_{\mathrm{F}}$.  
\subsection{Choosing $g$ using the transition density of a scaled Brownian motion}
\label{subsec:scaled}
 
 For fixed $\sigma \in \bb{R}^{d\times d}$ define the process $\tilde{X}$ by 
 $\dd \tilde{X}(t) = {\sigma}\dd W(t)$, where  $W$ is a standard Brownian motion. Denote ${a}={\sigma}{\sigma}^\T$ and assume that ${a}$ is such that $L{a}L^\T$ is strictly positive definite. 
  We derive  $g$ from backward filtering the process $\tilde{X}$ with observation $V\mid \tilde{X}(T) \sim \scr{N}(L\tilde{X}(T),C)$. Let $q$ denote the density of the $\scr{N}(0,C)$-distribution. The density of $V$, conditional on $\tilde{X}(t)=x$ is given by $u(t,x):=\int \tilde{p}(t,x;T,y)q(v-Ly)\dd y$. It follows from the results in \cite{mider2020continuousdiscrete} that $u(t,x)\propto g(t,x)$ where
\begin{equation}\label{eq:ourchoice_g}
	 g(t,x)= \exp\left(-\frac12 x^\T H(t) x + F(t)^\T x\right) .
\end{equation}

Here, for $t\leq T$, $H(t)$ and $F(t)$ satisfy the ordinary differential equations
  \begin{equation}\label{eq:HFsimple}
  	\begin{aligned} \dd H(t) &= H(t){a} H(t) \dd t, \qquad H(T) = L^\T C^{-1} L  \\
  		\dd F(t) &= H(t) {a} F(t)\dd t,\qquad F(T)=L^\T C^{-1}v\end{aligned}. 
  	\end{equation}
 
These differential equations can be solved in closed-form:
\begin{equation}
	\label{eq:HF-closedform}
	H(t) = z(t)H(T) \quad \text{and} \quad
	F(t) = z(t) F(T),
\end{equation}
with $z(t)=\left(I+H(T){a}(T-t)\right)^{-1}$

Note that we have not yet specified $\sigma$. Depending on its choice,  $g$ may be radically different from $h$. Nevertheless,  it can be used as a change of measure to condition paths of $X$ on the event $\{LX(T)=v\}$.

	\begin{thm}\label{thm:main-single-obs}
		Let $g$ be defined by \eqref{eq:ourchoice_g} and  \eqref{eq:HFsimple}. Then 	$\bb{P}_T^h\ll\bb{P}_T^{g}$ with 
		\begin{equation}  
		\label{eq:radon-nikodym-1obs}
			\der{\bb{P}_T^h}{\bb{P}_T^{g}} = \frac{g(0,x_0)}{h(0,x_0)} \exp\left( \int_0^T \frac{\scr{A}g}{g} (s,X(s))\dd s \right)\exp\left(-\frac12 v^\T C^{-1}v\right) \ind_{\{LX(T)=v\}}.
		\end{equation}

	\end{thm}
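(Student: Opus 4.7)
The strategy is to specialize \Cref{thm:absolute-continuity-general} to the Gaussian $g$ of \eqref{eq:ourchoice_g} and evaluate the factor $1/g(T,X(T-))$ explicitly on the conditioning event $\{LX(T)=v\}$.

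First, I would verify that $g\in\scr{G}$ and that the two boundedness hypotheses of \Cref{thm:absolute-continuity-general} hold. The closed forms in \eqref{eq:HF-closedform} show that $H(t)$ and $F(t)$ are well-defined on $[0,T]$ (the matrix $I+H(T)a(T-t)$ is invertible because $LaL^\T$ is positive definite) and that $t\mapsto g(t,x)$ is continuously differentiable for each fixed $x$, so $g\in\scr{D}(\scr{A})$. Substituting $\dot H = HaH$ and $\dot F = HaF$ yields the explicit quadratic $\partial_t \log g(t,x)=-\tfrac12 x^\T H(t)aH(t)x + F(t)^\T aH(t)x$, and the jump ratio $\alpha^g_\ell(t,x)=\exp\!\bigl(-\xi_\ell^\T H(t)x - \tfrac12 \xi_\ell^\T H(t)\xi_\ell + F(t)^\T \xi_\ell\bigr)$ admits a similar closed form. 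Both quantities are unbounded in $x$, so \Cref{prop:g-property} does not apply directly; I would instead introduce the localizing stopping times $T_K=\inf\{t:|X(t)|\geq K\}$ from \Cref{thm:processexists}, on which these quantities are bounded, verify that $D_{t\wedge T_K}^g$ is a true martingale via \cite[Ch.\ 4, Lemma 3.2]{ethier2009markov}, and pass to $K\to\infty$ using non-explosion of $X$ under $\bb{P}^g$ (a consequence of \Cref{ass:AssumptionsOnProcess} combined with the explicit control afforded by \eqref{eq:HF-closedform} on the exponential factors $\alpha^g_\ell$).

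Second, once the premises of \Cref{thm:absolute-continuity-general} are in force, I would invoke \eqref{eq:radon-nikodym-abstract} and simplify. Under $\bb{P}^g$ the jumps of $X$ on $[0,T]$ form a locally finite set and a jump at the specific time $t=T$ has probability zero, so $X(T-)=X(T)$ $\bb{P}^g$-a.s. Hence on $\{LX(T)=v\}$ we have $LX(T-)=v$, and using $H(T)=L^\T C^{-1}L$ together with $F(T)=L^\T C^{-1}v$,
\[ g(T,X(T-)) = \exp\!\left(-\tfrac12 (LX(T-))^\T C^{-1}(LX(T-)) + v^\T C^{-1}(LX(T-))\right) = \exp\!\left(\tfrac12 v^\T C^{-1} v\right). \]
Substituting $1/g(T,X(T-))=\exp(-\tfrac12 v^\T C^{-1}v)$ into \eqref{eq:radon-nikodym-abstract}, and recognizing $\Psi_T(X)=\exp(\int_0^T (\scr{A}g/g)(s,X(s))\dd s)$ from \eqref{eq:Psi}, produces exactly \eqref{eq:radon-nikodym-1obs}.

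The main obstacle is the first step: the Gaussian form of $g$ makes both $\partial_t \log g$ and the jump ratios $\alpha^g_\ell$ unbounded in $x$, so one cannot quote \Cref{prop:g-property} off the shelf. The substantive work consists in the localization argument — establishing uniform integrability of $D_{t\wedge T_K}^g$ as $K\to\infty$ and ruling out explosion of the guided process under $\bb{P}^g$ — which will rely on the polynomial (mass-action) growth of the $\lambda_\ell$ together with the explicit, uniformly bounded behaviour of $H(t)$ and $F(t)$ on $[0,T]$.
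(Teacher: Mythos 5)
Your second step is correct and is exactly what the paper needs: on $\{LX(T)=v\}$, using $X(T-)=X(T)$ $\bb{P}^g$-a.s.\ together with $H(T)=L^\T C^{-1}L$ and $F(T)=L^\T C^{-1}v$, one gets $1/g(T,X(T-))=\exp(-\tfrac12 v^\T C^{-1}v)$, which turns \eqref{eq:radon-nikodym-abstract} into \eqref{eq:radon-nikodym-1obs}. The divergence is in the first step. The paper does not localize: it rewrites $g$ in the form \eqref{eq:htilde-noHF}, $g(t,x)\propto\exp\bigl(-\tfrac12(v-Lx)^\T M(t)(v-Lx)\bigr)$ with $M(t)=(C+LaL^\T(T-t))^{-1}$, invokes \Cref{lem:Bounds-on-htilde} for bounds on $g$, and then quotes \Cref{prop:g-property}. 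Your observation that $\partial_t\log g$ and $\alpha^g_\ell$ are unbounded in $x$ (equivalently, that the lower bound on $g$ in \Cref{lem:Bounds-on-htilde} is uniform in $t$ but not in $x$) is accurate and is a fair criticism of applying \Cref{prop:g-property} verbatim on an unbounded state space; but in rejecting that route you have replaced a one-line argument with a program, not a proof.

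The gap is that the program is not carried out. The martingale property of $D^g_{t\wedge T_K}$, the uniform integrability needed to pass $K\to\infty$, and non-explosion of the guided process under $\bb{P}^g$ are precisely where all of the difficulty of the theorem lives on your route: since $\alpha^g_\ell(t,x)=\exp\bigl((L\xi_\ell)^\T M(t)(v-Lx)-\tfrac12(L\xi_\ell)^\T M(t)L\xi_\ell\bigr)$ grows exponentially in $|x|$ in some directions, $\sum_{\ell}\lambda^g_\ell$ can grow much faster than $\sum_\ell\lambda_\ell$, and non-explosion under $\bb{P}^g$ does not follow from \Cref{ass:AssumptionsOnProcess} alone. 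You also lean on ``polynomial (mass-action) growth of the $\lambda_\ell$'', an assumption the paper explicitly does not make (the rates need not even be linear). As written, the proposal asserts rather than verifies the hypotheses of \Cref{thm:absolute-continuity-general}; to become a proof it would either have to supply the localization estimates in full, or follow the paper and argue through \eqref{eq:htilde-noHF}, \Cref{lem:Bounds-on-htilde} and \Cref{prop:g-property}, accepting (or explicitly repairing) the same uniformity-in-$x$ caveat that the paper's own proof glosses over.
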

	\begin{proof}
	It follows from theorems 2.4 and 2.5 in \cite{mider2020continuousdiscrete} that if we define
 	\begin{equation*}
 		M(t) := \left(C+L{a}L^\T (T-t)\right)^{-1},
 	\end{equation*}
	then $H(t) = L^\T M(t)L$ and $F(t) =L^\T M(t)v$ and therefore
 	\begin{equation}
 		\label{eq:htilde-noHF}
 		g(t,x) \propto \exp\left( -\frac12 (v-Lx)^\T M(t)(v-Lx) \right).
 	\end{equation}
	The proportionality is up to a differentiable time-dependent function and hence does not affect the change of measure by \Cref{lem:time-scaling}.
	By \Cref{lem:Bounds-on-htilde}, $g$ is bounded from above and bounded away from $0$. Since $g$ is  smooth in $t$ and well-defined in $T$, we clearly have continuous differentiability of the maps $t\mapsto g(t,x)$ for all $x\in\bb{S}$ on $[0,T]$. The result thus follows from \Cref{prop:upper-lower-bounds-g} and \Cref{thm:absolute-continuity-general}.  
	\end{proof}

\Cref{thm:main-single-obs} is only of interest  when the guided process has a positive probability of hitting $v$ at time $T$. This is ensured by \Cref{thm:positive-prop-1obs} for which the proof is deferred to \Cref{app:proof-of-path-exists}.  

\begin{thm}\label{thm:positive-prop-1obs} $\bb{P}^{g}(LX(T)=v)>0$ 
\end{thm}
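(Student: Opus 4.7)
The plan is to exhibit a concrete $\bb{P}^g$-event of positive probability that is contained in $\{LX(T)=v\}$, rather than arguing indirectly from the absolute continuity established in \Cref{thm:main-single-obs}.

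First I would use the standing assumption $h(0,x_0)>0$. By \eqref{eq:defh-single}, there exists $y\in L^{-1}v\cap\bb{S}$ with $p(0,x_0;T,y)>0$. I would then unfold this transition probability via the Poisson representation of \Cref{thm:processexists}: $p(0,x_0;T,y)$ decomposes as a countable sum, over ordered reaction sequences $\ell_1,\dots,\ell_N$ with $x_0+\sum_{k=1}^N\xi_{\ell_k}=y$, of integrals over ordered jump times of products of intensities along the path times exponential survival factors between jumps. Since this sum is strictly positive, at least one admissible sequence contributes a positive integral; I would fix such a sequence together with the intermediate states $y_0=x_0,y_1,\dots,y_N=y$ (all in $\bb{S}$ thanks to \ref{ass:AssumptionsOnProcess-WellDefinedJumps}), and extract an open subinterval $(a,b)\subset(0,T)$ on which each $\lambda_{\ell_k}(\cdot,y_{k-1})$ is strictly positive.

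Next I would transfer admissibility of this path to the guided law $\bb{P}^g$. Because $g$ in \eqref{eq:ourchoice_g} is the exponential of a quadratic form, $g(t,x)>0$ everywhere on $[0,T]\times\bb{R}^d$, so $\alpha^g_\ell(t,x)=g(t,x+\xi_\ell)/g(t,x)>0$ and hence $\lambda^g_{\ell_k}(\cdot,y_{k-1})>0$ on $(a,b)$. Moreover $\sum_\ell\lambda^g_\ell$ is uniformly bounded on $[0,T]\times\bb{S}$, as already shown in the proof of \Cref{thm:main-single-obs}. I would then lower-bound $\bb{P}^g(X(T)=y)$ by the probability of the explicit event that, for each $k$, the $k$-th jump is of type $\ell_k$ and occurs in a prescribed small interval $I_k\subset(a,b)$, with $I_1<\cdots<I_N$ in time, and that no further jumps occur on the complement of $\bigcup_k I_k$ in $[0,T]$. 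This event lies in $\{X(T)=y\}\subseteq\{LX(T)=v\}$, and under $\bb{P}^g$ its probability is the integral over $I_1\times\cdots\times I_N$ of a product of strictly positive intensities and strictly positive exponential survival factors; hence it is strictly positive.

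The main obstacle I anticipate is the extraction in the first step: rigorously deducing from the single inequality $p(0,x_0;T,y)>0$ that some specific ordered reaction sequence admits strictly positive rates on a set of jump times of positive Lebesgue measure. The underlying argument is a union bound over countably many ordered sequences applied to the path-space expansion of the transition probability, but carefully setting up the path integral and invoking Fubini to extract a contributing sequence is the notationally delicate part of the proof.
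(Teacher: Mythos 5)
Your proposal is essentially the paper's own argument: the paper likewise exhibits an explicit chain of reactions $\ell_1,\dots,\ell_I$ fired in prescribed ordered time windows, notes that the guided rates $\lambda_\ell^{g}=\alpha_\ell^{g}\lambda_\ell$ inherit positivity from $\lambda_\ell$ because $g>0$ while $\sum_{\ell}\lambda_\ell^{g}$ stays bounded (via \Cref{lem:Bounds-on-htilde} and \ref{ass:AssumptionsOnProcess-FiniteSum}), lower-bounds the probability of this event by a finite product of strictly positive factors, and finishes with the positive probability of no further reaction up to $T$. The only difference is that the paper simply postulates reachability of some $x_T\in L^{-1}v$ through \Cref{def:connection} rather than extracting it from $h(0,x_0)>0$ as you propose; if you do carry out that extraction, note that the path-space expansion only yields, for each $k$, a positive-measure set of times at which $\lambda_{\ell_k}(\cdot,y_{k-1})>0$, arranged in increasing order as in \Cref{def:connection}, and not necessarily a single interval $(a,b)$ on which all the rates are simultaneously positive.
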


A convenient choice for the matrix $C$ has computational advantages. 
\begin{defn}
\label{def:g_eps}
	Let $\eps>0$ and set $C = \eps L{a}L^\T$. The corresponding $g$ in \eqref{eq:ourchoice_g} is denoted by $g_\eps$.
\end{defn}

\begin{lem}\label{lem:g_epsilon}
The induced measure $\mathbb{P}^{g_\eps}_T$ is the law of a chemical reaction process with intensities $\lambda_\ell^{g_\eps}(t,x)=\alpha^{g_\eps}_\ell(t,x) \lambda_\ell(t,x)$, where 
\begin{equation}
	\label{eq:guided_rate_worked_out}
\alpha^{g_\eps}_\ell(t,x) = \exp\left( - \frac{ d(v, L(x+\xi_\ell))^2-d(v,Lx)^2}{2(\eps + T-t )} \right), 
\end{equation}
with $d$ the metric given by 
  \begin{equation}
 	\label{eq:metric-1obs}
	d(x,y)^2 = (y-x)^\T(LaL^\T)^{-1}(y-x).
 \end{equation}
\end{lem}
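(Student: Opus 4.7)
The plan is to avoid solving the ODEs \eqref{eq:HFsimple} explicitly and instead exploit the probabilistic interpretation of $g$ from \Cref{subsec:guiding_term_scaled_diffusion}: $g_\eps$ is proportional, up to a factor depending only on $t$, to the density $u(t,x)$ of $V$ at $v$ conditional on $\tilde X(t)=x$. Once $g_\eps$ is in closed form, the intensities $\lambda^{g_\eps}_\ell$ drop out of \eqref{eq:Doobh-generator}--\eqref{eq:scrG}, and the identification of $\bb{P}^{g_\eps}_T$ as a chemical reaction process with those intensities is then immediate from the generator formula \eqref{eq:Doobh-generator}.

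The key step is to compute $u(t,x)$ for the scaled Brownian motion $\dd\tilde X=\sigma\dd W$. Since $\tilde X(T)\mid\tilde X(t)=x\sim\scr{N}(x,a(T-t))$ and $V\mid\tilde X(T)\sim\scr{N}(L\tilde X(T),C)$, marginalising over $\tilde X(T)$ yields $V\mid\tilde X(t)=x\sim\scr{N}(Lx,\,LaL^\T(T-t)+C)$. Substituting $C=\eps LaL^\T$ collapses the covariance to $(\eps+T-t)LaL^\T$, so up to a factor $\kappa(t)$ depending only on $t$,
\[ g_\eps(t,x)=\kappa(t)\exp\!\left(-\frac{d(v,Lx)^2}{2(\eps+T-t)}\right), \]
with $d$ as in \eqref{eq:metric-1obs}. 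Expanding the quadratic in $x$ also reproduces the $H(t),F(t)$ obtained from \eqref{eq:HF-closedform} under $C=\eps LaL^\T$, which I would include as a brief consistency check.

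To finish, I form $\alpha^{g_\eps}_\ell(t,x)=g_\eps(t,x+\xi_\ell)/g_\eps(t,x)$ using \eqref{eq:scrG}. The prefactor $\kappa(t)$ cancels and the two exponents subtract, producing exactly \eqref{eq:guided_rate_worked_out}. The only delicate point is interpreting the ``$\propto$'' relation invoked from \cite{mider2020continuousdiscrete} in the sense of \Cref{subsec:Introduction-Notation} (equal up to a differentiable function of time); however, since the intensity formula involves only ratios of $g_\eps$ at two spatial points evaluated at the same $t$, every such time-only factor cancels automatically, so no genuine obstacle arises.
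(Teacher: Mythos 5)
Your proposal is correct and follows essentially the same route as the paper: the paper likewise reduces $g$ to $g(t,x)\propto\exp\bigl(-\tfrac12(v-Lx)^\T M(t)(v-Lx)\bigr)$ with $M(t)=(C+LaL^\T(T-t))^{-1}$ (citing the relevant results of Mider et al.\ rather than re-deriving the Gaussian marginal as you do), notes that $C=\eps LaL^\T$ collapses $M(t)$ to $\bigl((\eps+T-t)LaL^\T\bigr)^{-1}$, and cancels the time-only proportionality factor via \Cref{lem:time-scaling} when forming the ratio $\alpha^{g_\eps}_\ell$. Your direct marginalisation of $V\mid\tilde X(t)=x$ is just a self-contained way of obtaining the same $M(t)$, so there is no substantive difference.
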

\begin{proof}
 	This result follows by substituting the expression for $g$ in \eqref{eq:htilde-noHF} into the definition of $\alpha^g_\ell(t,x)$ and rewriting in terms of the metric $d$. 
\end{proof}
This implies that only one matrix inverse, $(LaL^\T)^{-1}$,  needs to be calculated.  
By \Cref{eq:guided_rate_worked_out}, the intensity of the guided process either becomes very large or small depending on whether $L(x+\xi_\ell)$ is closer or further away from $v$ than $Lx$ with respect to the metric $d$, which has intuitive appeal. Also note that $\eps$ appears only in the denominator at $T+\eps$, which implies that the choice $C=\eps L{a}L^\T$ {imposes the conditioning at at time  $T+\eps$ instead of $T$}, thereby precluding explosive behaviour in \eqref{eq:htilde-noHF} as $t\uparrow T$.

\begin{rem}
	\label{rem:prob-isnot1}
	Ideally, we would like to have $\bb{P}^{g}\left(LX(T)=v\right)=1$. For $g=g_\eps$ as in \Cref{lem:g_epsilon} this is not guaranteed. Essentially, the choice $C=\eps L{a}L^\T$ yields a process where $LX(t)$ hits $v$ at time $T+\eps$. Since the intensities are bounded from below and above, there is a positive probability of at least one reaction occurring in $(T,T+\eps)$ resulting in $LX(T)\neq v$. In \Cref{sec:equivalence}  we discuss the case where $\eps=0$, which does yield $\bb{P}^{g}(LX(T)=v)=1$. 
\end{rem}

\begin{rem}
Comparing $g_{\mathrm{F}}$, $g_{\mathrm{LNAR}}$ and $g_\eps$, it is 
{likely} that $g_\eps$ will deviate most from $h$. In particular when the drift and diffusion coefficient of the CLE are nonlinear, $g_\eps$ will likely deviate from $h$ the most leading to the guided paths deviating from the conditioned paths. However, from a computational point of view  $g_\eps$ is most attractive, as can be seen from \eqref{eq:guided_rate_worked_out}.  Moreover, in Section \ref{sec:simulations} it turns out that simulation of the guided process is simplified in case of $g_\eps$. 
\end{rem}

\begin{rem}\label{rem:choice_a}
	In dimension $1$, we derive a better intuition for the choice for $a$ from \eqref{eq:guided_rate_worked_out} and \eqref{eq:metric-1obs}. A small choice leads to a higher guided intensity, and thus a process arriving around $v$ quickly and likely to stay near $v$, while a large value of $a$ leads to trajectories that remain unaffected by $\alpha_\ell^{g_\eps}$ until $t$ approaches $T$. Ideally, we mimic trajectories under $\bb{P}^h$ and thus a good choice of $a$ ensures $g(0,x_0)\Psi_T^g(X)\ind\{LX(T)=v\}/g(T,X(T))$ is large (with $\Psi_t^g(X)$ defined in Equation \eqref{eq:Psi}). This can be verified using Monte-Carlo simulation sampling  $X$ under $\bb{P}^{g_\eps}$.  Alternatively, a simple choice for $a$ is  $a_{\mathrm{CLE}}(0,x_0)$ or, if we have a complete observation, $a=a_{\mathrm{CLE}}(T,v)$. 
\end{rem}

\subsubsection{Extension to multiple observations}\label{subsubsec:multiple}

	We now extend \Cref{thm:main-single-obs} to a result for multiple observations. Consider observations $v_i = L_i X(t_i)$, $i=1,\dots,n$ where $0=t_0<t_1<\cdots<t_n$ and assume without loss of generality that $L_i \in\bb{R}^{m_i\times d}$ are of full column rank with $m_i\leq d$ and $L_i = I$ when $m_i=d$. \\

	\begin{prop}
		\label{prop:ConditionedProcess}
		Let $p$ denote the transition probabilities of $X$ and define for $t\in[t_{k-1},t_k)$ and $x\in\bb{S}$
		\begin{equation}
			\label{eq:defh-multiple}
			\begin{aligned}
			h(t,x) &= \bb{P}\left( L_iX(t_i)=v_i,\, i=k,\dots,n\mid X(t)=x\right) \\
			&= \sum_{\zeta_k \in L_k^{-1}v_k} \cdots \sum_{\zeta_n \in L_n^{-1}v_n} p(t,x;t_k, \zeta_k)\prod_{i=k}^{n-1} p(t_i,\zeta_i;t_{i+1},\zeta_{i+1}).
			\end{aligned}
		\end{equation}
		Then $h\in\scr{G}$ and the change of measure \eqref{eq:DefPstar} induces $\left(X\mid L_kX(t_k)=v_k,\, k=1,\dots,n\right)$. 
	\end{prop}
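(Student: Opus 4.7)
The plan is to establish the proposition in two stages: first, to show that $h$ is $\scr{A}$-harmonic on each open subinterval $(t_{k-1}, t_k)$; second, to identify the piecewise Doob martingale $D^h_t$ with $M_t/h(0,x_0)$, where $M_t := \bb{P}(A_n \mid \scr{F}_t)$ is the conditional-probability martingale. Anchoring the argument on $M_t$ allows me to bypass the issue that $h$ itself is not globally smooth in $t$.

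First, I would invoke the Markov property to see that for $t \in [t_{k-1}, t_k)$,
\[ h(t, X(t)) = \bb{P}(L_i X(t_i) = v_i,\ i = k, \ldots, n \mid \scr{F}_t), \]
since the event on the right depends on $X$ only through its values after time $t$. On the open interval $(t_{k-1}, t_k)$ each summand of \eqref{eq:defh-multiple} is a product in which only the factor $p(t, x; t_k, \zeta_k)$ depends on $(t, x)$, so Kolmogorov's backward equation gives $\scr{A} h \equiv 0$ on this interior.

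Next, I would observe that on $[t_{k-1}, t_k)$,
\[ M_t = \prod_{j=1}^{k-1} \ind_{\{L_j X(t_j) = v_j\}}\, h(t, X(t)), \]
since the indicator product is $\scr{F}_{t_{k-1}}$-measurable and pulls out of $\bb{P}(A_n \mid \scr{F}_t)$. Combined with $\scr{A}h = 0$ on each open subinterval, telescoping across the observation times using the left-limit identity $h(t_k^-, x) = \ind_{\{L_k x = v_k\}} h(t_k, x)$ yields $D^h_t = M_t / h(0, x_0)$ as a nonnegative $(\scr{F}_t)$-martingale, establishing $h \in \scr{G}$. The change-of-measure claim is then immediate: for any bounded $\scr{F}_s$-measurable $f$ with $s < t_n$,
\[ \bb{E}^h[f] = \bb{E}[f D^h_s] = \frac{\bb{E}[f\, \bb{P}(A_n \mid \scr{F}_s)]}{\bb{P}(A_n)} = \frac{\bb{E}[f \ind_{A_n}]}{\bb{P}(A_n)} = \bb{E}[f \mid A_n]. \]

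The principal difficulty is that $h$ has jumps at each $t_k$ and so is not in $\scr{D}(\scr{A})$ in the classical sense, making the formula \eqref{eq:doobh} for $D^h_t$ delicate across the observation times. My strategy for sidestepping this is to anchor the analysis on the manifestly cadlag martingale $M_t$ and to verify that the piecewise reading of \eqref{eq:doobh}, namely a trivial exponential on each open subinterval together with multiplication by $\ind_{\{L_k X(t_k) = v_k\}}$ at each boundary $t_k$ arising from the jump of $h$, reproduces $M_t / h(0, x_0)$; under $\bb{P}^h$ these indicators are almost surely $1$, so the formal and operational versions of $D^h_t$ agree on the support of the conditioned law.
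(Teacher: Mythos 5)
Your proof is correct, and it takes a genuinely different route from the paper: the paper disposes of this proposition in one line by deferring to Example 2.4 of \cite{corstanje2021conditioning} with the Dirac choice $\mu(\zeta_k)=\delta(v_k-L_k\zeta_k)$, whereas you give a self-contained verification inside this paper's own framework. What your argument buys is an explicit record of the two facts the citation hides: piecewise harmonicity $\scr{A}h=0$ on each $(t_{k-1},t_k)$ via the backward equation, and the identification $D^h_t=\bb{P}(A_n\mid\scr{F}_t)/h(0,x_0)$, from which $\bb{E}^h[f]=\bb{E}[f\mid A_n]$ follows in one line because $h(0,x_0)=\bb{P}(A_n)$. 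You are also more careful than the paper about the only delicate point, namely that $t\mapsto h(t,x)$ jumps at each $t_k$, so the literal formula \eqref{eq:doobh} must be read with the left-limit convention $h(t_k^-,x)=\ind_{\{L_kx=v_k\}}h(t_k,x)$ and the indicators retained at the boundaries; without them $h(t,X(t))/h(0,x_0)$ is not a martingale, since it would resurrect paths that fail an earlier conditioning. Two minor points you could still make explicit: interchanging $\scr{A}$ with the (possibly infinite) sums over $\zeta_k,\dots,\zeta_n$ deserves a word (dominated convergence, using $\sum_{\zeta}p\le 1$), and membership in $\scr{G}$ formally requires strict positivity of $h$, which can fail at states from which the remaining conditionings are unreachable; the paper itself only assumes $h(0,x_0)>0$, and the standard resolution is that $\bb{P}^h$ never charges the zero set of $h$.
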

\begin{proof}
	This result is obtained upon following Example 2.4 of \cite{corstanje2021conditioning} using the delta-dirac distribution $\mu(\zeta_k) = \delta(v_k-L_k\zeta_k)$. 
	\end{proof}

We deduce the form of $g$ from \cite{mider2020continuousdiscrete} in a similar way compared to a single observation. We consider an auxiliary process $\tilde{X}$ that solves the SDE 
 \begin{equation}
 	\label{eq:auxiliary-process}
 	 \dd \tilde{X}(t) = {\sigma}(t) \dd W(t), \qquad X(0)=x_0,\, t\in[0,t_n),
 \end{equation}
 where ${\sigma}(t) = \sum_{k=1}^{n}{\sigma}_k\ind_{[t_{k-1},t_k)}(t)$ and ${a}_k = {\sigma}_k{\sigma}_k^\T$ are positive definite $d\times d$ matrices for $k=1,\dots,n$. 
 
 For each observation $k$, we consider $V_k\mid X(t_k)\sim \scr{N}(0,C_k)$ where $C_k$ is an $m_k\times m_k$ covariance matrix. Suppose $q_k$ {denotes} the density of the $\scr{N}(0,C_k)$-distribution. Then the transition density of $\tilde{X}$ satisfies for $t\in[t_{k-1},t_k)$, 
 
\begin{equation}
	\label{eq:htilde-multiple}
 \begin{gathered}  	\int \tilde{p}(t,x;t_k,\zeta_k)q_k(v_k-L_k\zeta_k)\prod_{i=k}^{n-1}\tilde{p}(t_i,\zeta_i;t_{i+1},\zeta_{i+1})q_{i+1}(v_{i+1}-L_{i+1}\zeta_{i+1}) \dd\zeta_k\cdots\dd\zeta_n  \\ \propto \exp\left(-\frac12 x^\T H(t)x + F(t)^\T x\right)=: g(t,x). 
 \end{gathered}
\end{equation}

Here, the expressions for $H$ and $F$ can be found by backward solving a system of equations given by
\[ \begin{aligned}
 H(t_n)  &= L_n^\T C_n^{-1}L_n \\
 F(t_n) &= 	L_n^\T C_n^{-1}v_n
 \end{aligned} \]
 and for $t\in(t_{k-1},t_k)$, $k=1,\dots,n$
\begin{equation}
\label{eq:HF-ODEs-multiple}
	\begin{aligned}
	\dd H(t) &= H(t)a_k H(t) \dd t, \qquad  H(t_k) = H_k := L_k^\T C_k^{-1}L_k + H(t_k+) \\
 	\dd F(t) &= H(t)a_k F(t) \dd t, \qquad  F(t_k) = F_k := L_k^\T C_k^{-1}v_k + F(t_k+)
	\end{aligned}
\end{equation}
Finally, we let $H$ and $F$ be a right continuous modification of the solution to \eqref{eq:HF-ODEs-multiple}, i.e. setting $H(t_k) = H(t_k+)$ and $F(t_k)=F(t_k+)$, $k=1,\dots,n-1$. 
This system can be solved in closed form: ${H}(t) = z_k(t)H_k$, ${F}(t) = z_k(t)F_k$, where, for $k=1,\dots,n$,  $z_k(t) = \left(I+H_ka_k(t_k-t)\right)^{-1}$.

\begin{rem}
	\label{rem:htilde-multiple-alternative}
	Alternatively, following the computations in Section 2 of \cite{mider2020continuousdiscrete},  $g$ can be expressed as 
	\begin{equation} 
		\label{eq:htilde-multiple-alternative}
		g(t,x) \propto \exp\left( -\frac12 \left(v(t)-L(t)x\right)^\T M(t)\left(v(t)-L(t)x\right) \right), 
	\end{equation}
	where, the proportionality is up to a time-dependent function and for $t\in[t_{k-1},t_k)$
	\[ L(t) = \Bm L_k \\ \vdots \\ L_n \Em  \qquad \text{and}\qquad v(t) = \Bm v_k \\ \vdots \\ v_n \Em \]
	and $M(t) = M^\dagger(t)^{-1}$ with $M^\dagger(t)$ a block matrix with entries
	\[ M^\dagger(t) = \Bm C_i\ind\{i=j\} +\sum_{l=k}^{i\wedge j-1} L_i a_{l+1} L_j^\T(t_{l+1}-t_{l}) + L_i a_{k}L_j^\T (t_{k}-t)\Em_{i,j=k}^n,  \]
$t\in[t_{k-1},t_k)$.
	While this representation of $g$ is useful in most proofs, it is computationally more demanding as the matrix dimensions of $H(t)$ and $F(t)$ are $d\times d$, while $M(t)$, $L(t)$ and $v(t)$ have dimensions that increase with the amount of observations. 
\end{rem}

Using \Cref{lem:bounds-on-htilde-multiple}, the proof of \Cref{thm:main-single-obs} can be repeated for a result like \Cref{thm:main-single-obs} for each observation $k$ to find that $\bb{P}_{t_n}^h \ll \bb{P}_{t_n}^{g}$ with 
\begin{equation}
	\label{eq:Radon-Nikodym-multiple}
	\der{\bb{P}_{t_n}^h}{\bb{P}_{t_n}^{g}} = \frac{g(0,x_0)}{h(0,x_0)}   \exp\left(\int_0^{t_n}\frac{\scr{A}g}{g}(s,X(s))\dd s - \frac12 \sum_{k=1}^n v_k^\T C_k^{-1}v_k\right) \ind_{A_n} .
\end{equation}
The term $\exp\left(- \frac12 \sum_{k=1}^n v_k^\T C_k^{-1}v_k\right)$ results from evaluating each fraction 
$g(t_k, x)/g(t_k-,x)$ on the set $\left\{L_k x=v_k\right\}$. We can repeat \Cref{thm:positive-prop-1obs} for each observation to obtain $\bb{P}^{g}\left(A_n \right) >0$. However, similar to  \Cref{rem:prob-isnot1}, we generally do not have $\bb{P}^{g}\left(A_n \right) = 1 $. 

	\subsection{Choosing $g$ to guide a process with monotone components}
		\label{sec:poisson}
		\Cref{thm:main-single-obs} shows that choosing $g$ using the transition density of a scaled Brownian motion yields a process that is absolutely continuous with respect to the conditioned process. However, depending on the network, this might not always be a desirable choice. \\
		
		Consider for instance \Cref{ex:Enzyme-kinetics}. Here, the fourth component of the process (\textit{P}) is monotonically increasing as it only appears in reaction $3$, where $1$ is added, which can also be seen in \Cref{fig:enzyme-kinetics-forwardsimulation}. Let us for simplicity assume a complete observation at time $T$, that is $X(T)=v_T$. Clearly, if $X_4(t)=v_{T,4}$ for some $t<T$, reaction $3$ cannot occur anymore for the process to satisfy the conditioning. However,  $\lambda_3^{g}(t,X(t)) \neq 0$ for the choices of  $g$ discussed so far. This can lead to trajectories that don't satisfy the conditioning, which in turn means a low acceptance ratio when sampling. \\
		
		Alternatively, we choose an auxiliary process $\tilde{X} = ( \tilde{Z},  \tilde{Y})$, where $\tilde{Z}$ is the $\bb{R}^3$-valued process that solves 
$\dd\tilde{Z}(t) = \sigma \dd W_t$ 
		and $\tilde{Y}$ is a homogeneous Poisson process with intensity $\tilde{\theta}$. We stick with the notation $x = (z,y)\in\bb{R}^4$ with $z\in\bb{R}^3$ and $y\in\bb{R}$, and denote $v_T = (z_T,y_T)$. Similar to earlier computations, we deduce that  
		\begin{equation}
		\label{eq:htilde-poisson}	
		g(t,(z,y)) = \exp\left( -\frac{d(z_T,z)^2}{2(T +\epsilon-t)} \right) \frac{\left(\tilde{\theta}(T-t)\right)^{y_T-y}}{(y_T-y)!}\exp\left(-\tilde{\theta}(T-t)\right), 
		\end{equation}
		where $\epsilon>0$ and 
		\[ d(z_T,z) = \sqrt{(z_T-z)^\T a^{-1}(z_T-z)}. \]
		This choice satisfies the assumption of \Cref{lem:martingale-lemma}. 

\section{Choosing $g$ so that $\bb{P}^h$ and $\bb{P}^{g}$ are equivalent}
\label{sec:equivalence}

By  \eqref{eq:Radon-Nikodym-multiple},  the measures $\bb{P}^h$ and $\bb{P}^{g}$ are equivalent when $\bb{P}^{g}\left( A_n\right) = 1$, as all other terms in the Radon-Nikodym derivative are nonzero.  In this section we propose a choice for $g$ which
 yields equivalence of the measures $\bb{P}^h$ and $\bb{P}^{g}$ under an additional assumption on the network. 

	By \Cref{rem:prob-isnot1}, the choice $C = \eps LaL^\T$ can be interpreted as imposing a condition of hiting $v$ at time $T+\eps$. Therefore, intuitively, equivalence can be achieved upon artificially setting all elements in the matrices $C_k$ equal to $0$. In this case, $g(t_k,x)$ becomes ill-defined whenever $L_kx\neq v_k$ and boundedness of $g$ is lost, rendering the earlier proofs invalid. To utilize this choice for $g$ and show equivalence, we build on earlier work in \cite{corstanje2021conditioning}. Proofs of results in this section are given in \Cref{app:proof-of-main}.

\subsection{Absolute continuity}
	The initial conditions for the differential equations \eqref{eq:HF-ODEs-multiple} for the functions $H$ and $F$ appearing in  \eqref{eq:htilde-multiple} are not defined as the matrices $C_1,\dots,C_n$ are not invertible, but we can still obtain an equivalent of \eqref{eq:htilde-multiple-alternative}. Observe that for $t\in[t_{k-1},t_k)$,
\begin{equation}
\label{eq:htilde-equivalence}
\begin{gathered} 
	\int \tilde{p}(t,x;t_k,\zeta_k)\delta_{v_k}(L_k\zeta_k) \prod_{i=k}^{n-1} \tilde{p}(t_i,\zeta_i;t_{i+1},\zeta_{i+1})\delta_{v_{i+1}}(L_{i+1}\zeta_{i+1}) \dd\zeta_k \cdots \dd\zeta_n \\
	\propto \exp\left(- \frac12 (v(t)-L(t)x)^\T M(t)(v(t)-L(t)x)\right) =: g(t,x),
\end{gathered}
\end{equation}
where $L$, $v$ and $M$ are defined in \Cref{rem:htilde-multiple-alternative} but with $C_1,\dots ,C_n=0$ and we define $g(t_k,x) = g(t_k+,x)$ for $x\in L_k^{-1}v_k$.  

\begin{thm}
	\label{lem:htilde-property}
	Let $g$ be defined by \eqref{eq:htilde-equivalence}. Then $\bb{P}^h \ll \bb{P}^{g}$ with 	\begin{equation}
		\label{eq:absolutecontinuity-czero}
		 \der{\bb{P}_{t_n}^h}{\bb{P}_{t_n}^{g}} = \frac{g(0,x_0)}{h(0,x_0)} \exp\left( \int_0^{t_n} \frac{\scr{A}g}{g}(s,X(s))\dd s\right) \ind_{A[X^{t_n}]}  ,
		\end{equation} 
	where 
	\[	A[X^{t_n}] = \left\{ \sup_{0\leq s <t_n} \sum_{\ell\in\scr{R}} \lambda_{\ell}^{g}(s,X(s)) <\infty \right\}. \] 
\end{thm}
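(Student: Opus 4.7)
The plan is to derive the result from the change-of-measure identity \eqref{eq:AbsoluteContinuity} valid on $[0,t_n)$, pass to the limit $t\uparrow t_n$ by Doob's martingale convergence theorem, and identify the pathwise limit separately on $A[X^{t_n}]$ and its complement. Since $g\in\scr{G}$ by \Cref{lem:limitassumption-htilde} and $h\in\scr{G}$ with $\scr{A}h=0$ by \Cref{prop:ConditionedProcess}, define
\[
Z_t \;=\; \der{\bb{P}_t^h}{\bb{P}_t^g}(X) \;=\; \frac{g(0,x_0)}{h(0,x_0)}\cdot\frac{h(t,X(t))}{g(t,X(t))}\cdot\Psi_t(X), \qquad t\in[0,t_n).
\]
Then $(Z_t)_{t<t_n}$ is a non-negative $(\bb{P}^g,\scr{F}_t)$-martingale with $\bb{E}^g[Z_t]=1$, so $Z_{t_n-}:=\lim_{t\uparrow t_n} Z_t$ exists $\bb{P}^g$-almost surely and satisfies $\bb{E}^g[Z_{t_n-}]\leq 1$ by Fatou.

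The bulk of the argument is the pathwise identification of $Z_{t_n-}$. On $A[X^{t_n}]$ the guided intensities are uniformly bounded on $[0,t_n)$, so $X$ has only finitely many jumps under $\bb{P}^g$ and the left limits $X(t_k-)$ exist for every $k$. On $A[X^{t_n}]\cap A_n$, \Cref{lem:limitassumption-htilde} together with the analogous computation for $h$ via \eqref{eq:defh-multiple} show that both $g(t,X(t))$ and $h(t,X(t))$ extend continuously across each observation time with a common factor $\ind_{\{L_kX(t_k-)=v_k\}}$, so the ratio $h/g$ is continuous up to $t=t_n$ and tends to $1$, giving $Z_{t_n-}=(g(0,x_0)/h(0,x_0))\Psi_{t_n}(X)$. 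On $A[X^{t_n}]\cap A_n^c$, some $k$ has $L_kX(t_k-)\neq v_k$, and \eqref{eq:behavior-loghtilde} gives $\partial_s\log g(s,X(s))\sim -\tfrac{1}{2}d_k(v_k,L_kX(t_k-))^2/(t_k-s)^2$ as $s\uparrow t_k$, so by \eqref{eq:Ah/h} together with the boundedness of $\sum_\ell(\lambda^g_\ell-\lambda_\ell)$ on $A[X^{t_n}]$, we conclude $\int_0^{t_n}\scr{A}g/g\,\dd s=-\infty$, i.e.\ $\Psi_{t_n}(X)=0$, which absorbs the compensating blow-up of $h/g$ and yields $Z_{t_n-}=0$. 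Both sub-cases are consistent with $Z_{t_n-}=(g(0,x_0)/h(0,x_0))\Psi_{t_n}(X)\ind_{A[X^{t_n}]}$, and an analogous analysis shows $Z_{t_n-}=0$ almost surely on $A[X^{t_n}]^c$ as well.

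To promote the almost-sure limit to the Radon--Nikodym statement on $\scr{F}_{t_n}$, it remains to verify $\bb{E}^g[Z_{t_n-}]=1$. I would use the localising stopping times $\tau_K=\inf\{t\geq 0:\sum_\ell \lambda^g_\ell(t,X(t))\geq K\}$, which satisfy $\{\tau_K\geq t_n\}\uparrow A[X^{t_n}]$: on each such set, $(Z_{t\wedge\tau_K})_{t<t_n}$ is a bounded martingale that converges in $L^1$ by dominated convergence and integrates to $1$; a monotone convergence argument in $K$ then yields $\bb{E}^g[Z_{t_n-}\ind_{A[X^{t_n}]}]=1$, and since we already identified $Z_{t_n-}=0$ off $A[X^{t_n}]$, the claim follows. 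The main obstacle is the delicate rate matching in the identification step: at each unmatched observation time, the ratio $h(t,X(t))/g(t,X(t))$ grows like $(t_k-t)\exp(d_k^2/(2(t_k-t)))$ (polynomial for $h$ from the probability of a correcting jump; exponential for $g^{-1}$ from \eqref{eq:behavior-loghtilde}), and one must show that the exponential factor $\Psi_t(X)$ decays at precisely the matching rate to drive $Z_t$ to zero --- this is what ultimately produces the indicator $\ind_{A[X^{t_n}]}$.
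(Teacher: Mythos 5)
Your architecture (martingale Radon--Nikodym derivatives on $[0,t_n)$, almost-sure convergence, pathwise identification of the limit, then ruling out a singular part via localisation) is a genuinely different route from the paper, which instead invokes the abstract \Cref{thm:AbsoluteContinuityTheorem} with the truncation events $A_j[X^t]=\{\sup_{s<t}\sum_\ell\lambda^g_\ell(s,X(s))\le j\}$ and verifies its four hypotheses in \Cref{lem:ProofOfEquivalence-lem1,lem:ProofOfEquivalence-lem2,lem:ProofOfEquivalence-lem3}. Your identification step is essentially sound, though you make it harder than necessary: writing
\begin{equation*}
\frac{h(t,X(t))}{g(t,X(t))}\,\Psi_t(X)=\frac{h(t,X(t))}{g(0,x_0)}\,\Bigl(\prod_{j}\frac{g(\tau_j,X(\tau_j-))}{g(\tau_j,X(\tau_j))}\Bigr)\exp\Bigl(\int_0^t\tfrac{\scr{L}g}{g}(s,X(s))\,\dd s\Bigr)
\end{equation*}
cancels $g(t,X(t))$ exactly, so on $A[X^{t_n}]$ (finitely many jumps, $\int\scr{L}g/g$ bounded) the limit is governed solely by $h(t,X(t))\to\ind_{A_n}$ by martingale convergence --- no rate matching between the polynomial decay of $h$ and the exponential decay of $g$ is needed.

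There are, however, two genuine gaps in the final step. First, $(Z_{t\wedge\tau_K})_{t<t_n}$ is \emph{not} a bounded martingale: bounding $\sum_\ell\lambda^g_\ell$ by $K$ controls the \emph{ratios} $g(t,x+\xi_\ell)/g(t,x)$ but does not bound $g(t,X(t))$ away from zero (cf.\ the remark after \Cref{prop:sets}: trajectories with no improving reaction have bounded guided intensities while $g\to 0$), so $h/g$ can blow up on $\{\tau_K\ge t_n\}$ and your dominated-convergence passage $t\uparrow t_n$ fails as stated. The paper's \Cref{thm:AbsoluteContinuityTheorem} is engineered around exactly this obstacle: condition \ref{ass:EquivalenceAssumption2Limitfraction} works with the reciprocal $g/h$ \emph{under} $\bb{P}^h$, which reduces to $\bb{E}[g(t,X(t))\ind_{A_j[X^t]}]/h(0,x_0)$ where only the upper bound on $g$ is needed, while condition \ref{ass:AlmostSureBoundOnPsi} uses the bound $\Psi_t\ind_{A_j[X^t]}\le e^{jt_n}$, which rests on $\partial_t\log g\le 0$ from \Cref{lem:limitassumption-htilde} --- a property you never invoke. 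Second, even granting the $L^1$-convergence on each $\{\tau_K\ge t_n\}$, your monotone-convergence argument in $K$ only yields $\bb{E}^g[Z_{t_n-}\ind_{A[X^{t_n}]}]=1$ if the escaping-mass terms $\bb{E}^g[Z_{\tau_K}\ind_{\{\tau_K<t_n\}}]=\bb{P}^h(\tau_K<t_n)$ vanish as $K\to\infty$, i.e.\ if $\bb{P}^h(A[X^{t_n}])=1$. This is not automatic; it requires $A_n\subseteq A[X^{t_n}]$, which is precisely \Cref{prop:sets} and is a separate nontrivial ingredient (it uses the asymptotics \eqref{eq:limit-behaviour-guided-rates}) that your proposal does not identify.
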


To show that $\bb{P}^{g}_{t_n}(A[X^{t_n}])>0$, we require \Cref{prop:sets}, combined with the observation that the proof of \Cref{thm:positive-prop-1obs} can be repeated with this choice for $g$ upon observing that $\sum_{\ell\in\scr{R}}\lambda_\ell^{g}$ stays finite on the sets $\{L_kx=v_k\}$.
\begin{prop}
	\label{prop:sets}
	$A_n \subseteq A[X^{t_n}]  $ {with $A_n$ as defined in \eqref{eq:defAn}.}
\end{prop}

\begin{rem}
To see that the reverse inclusion generally does not hold, consider a process conditioned to hit $v$ at time $T$. If $LX(t)=u\neq v$ for $t\in(T-\eps,T]$ where $u$ is such that no reactions exist such that $d(v,L(u+\xi_\ell)) < d(v,Lu)$, it can be shown that $t\mapsto \frac{g(t,u+\xi_\ell)}{g(t,u)}$ is bounded and therefore such trajectories are included in $A[X^T]$. 
\end{rem}

		\subsection{Equivalence}
		\Cref{lem:htilde-property} shows that $\bb{P}^h$ is absolutely continuous with respect to $\bb{P}^{g}$ on the set $A[X^{t_n}]$. For simulation purposes, equivalence would be preferable. It follows from \eqref{eq:absolutecontinuity-czero} that this is indeed the case if $\bb{P}^{g}\left(A[X^{t_n}]\right)=1$. This can be shown if the network also satisfies a greedy property.

		\begin{ass}
			\label{ass:greedy}
			For $k=1,\dots,n$, define the metric $d_k$ on $\bb{R}^{m_k}$ through \eqref{eq:metric}
			and suppose that $d_1,\dots, d_n$ are such that for all $k$, $t\in[t_{k-1},t_k)$ $x\in\bb{S}\setminus L_k^{-1}v_k$, there is a reaction $\ell\in\scr{R}$ such that $\lambda_\ell(t,x)>0$ and $d_k(v_k,L_k(x+\xi_\ell))<d_k(v_k, L_k x)$.
		\end{ass}
	
	Intuitively, \Cref{ass:greedy} is satisfied when there is always a reaction available that takes the process closer to the first desired conditioning given being in state $x$ at time $t$. The choice for $g$ will then guarantee that eventually, the path will jump to this point closer to the conditioned state and will therefore hit the observation in finitely many reactions. This argument is formalized in \Cref{thm:Equivalence}. 

		\begin{thm}
				\label{thm:Equivalence}
				Suppose \Cref{ass:greedy} is satisfied. Then $\bb{P}^g_{t_n}(A_n)=1$. 
		\end{thm}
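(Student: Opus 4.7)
I will proceed by induction on $k=1,\dots,n$, showing that $\mathbb{P}^g_{t_n}\bigl(L_j X(t_j)=v_j,\, j\le k\bigr)=1$. The base case is trivial, and the inductive step reduces, via the strong Markov property of the guided process at time $t_{k-1}$ and the tower property for conditional expectations, to the following single-observation statement: for any initial state $x^{\ast}\in L_{k-1}^{-1}v_{k-1}$, the restarted guided process started at $(t_{k-1},x^{\ast})$ satisfies $L_k X(t_k)=v_k$ $\mathbb{P}^g$-almost surely. The proof of this single-observation claim is the crux.

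\textbf{Rate explosion via the greedy assumption.} The analytic engine is the asymptotic \eqref{eq:behavior-loghtilde}, which upon exponentiating gives
\[ g(t,x)\ =\ \kappa(t,x)\,\exp\!\left(-\frac{d_k(v_k,L_k x)^2}{2(t_k-t)}\right),\qquad t\uparrow t_k, \]
with $\kappa$ bounded and bounded away from $0$ on any set where $x$ ranges over a finite subset of $\mathbb{S}$. Hence, for any reaction $\ell$ with $d_k(v_k,L_k(x+\xi_\ell))<d_k(v_k,L_kx)$ and $\lambda_\ell(t,x)>0$, the guided intensity
\[ \lambda_\ell^{g}(t,x)\ =\ \lambda_\ell(t,x)\,\frac{g(t,x+\xi_\ell)}{g(t,x)} \]
satisfies $\int_t^{t_k}\lambda_\ell^{g}(s,x)\,\dd s=+\infty$ for every $t<t_k$. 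Assumption \ref{ass:greedy} guarantees the existence of such a reaction $\ell^{\ast}=\ell^{\ast}(x)$ whenever $L_kx\ne v_k$. By the inhomogeneous waiting-time formula \eqref{eq:JumpDistribution-Inhomogeneous}, this divergence forces the process, while sitting in any state $x\notin L_k^{-1}v_k$, to jump before time $t_k$ with probability $1$. Moreover, as $t\uparrow t_k$ the ratio $\lambda_{\ell^{\ast}}^{g}(t,x)/\sum_\ell \lambda_\ell^{g}(t,x)$ tends to $1$, so conditional on a jump occurring sufficiently close to $t_k$, the chosen reaction decreases $d_k$ with probability tending to $1$. Combining this with the fact that each trajectory of $X$ stays in a bounded subset $B\subset\mathbb{S}$ up to any stopping time strictly less than $t_k$ (non-explosion, inherited from Assumption \ref{ass:AssumptionsOnProcess-FiniteSum} via \Cref{thm:processexists} applied to the guided generator), the set $\{d_k(v_k,L_kx)^2:x\in B\}$ is finite, so only finitely many strictly $d_k$-decreasing jumps can occur before the process first hits $L_k^{-1}v_k$.

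\textbf{Staying in $L_k^{-1}v_k$ and main obstacle.} Once $X$ enters $L_k^{-1}v_k$ at some time $\tau<t_k$, for any reaction $\ell$ with $L_k(X(\tau)+\xi_\ell)\ne v_k$ the ratio $g(s,X(\tau)+\xi_\ell)/g(s,X(\tau))$ vanishes exponentially fast as $s\uparrow t_k$ (numerator goes to $0$, denominator remains bounded below by \Cref{lem:bounds-on-htilde-multiple}), so $\int_\tau^{t_k}\lambda_\ell^{g}(s,X(\tau))\,\dd s<\infty$ and only finitely many excursions out of $L_k^{-1}v_k$ can occur; each such excursion is corrected almost instantaneously by the divergence argument above, guaranteeing a return before $t_k$. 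The right-continuity of $X$ then yields $L_kX(t_k)=v_k$ a.s. The main obstacle is the bookkeeping needed to rule out a Zeno-type accumulation of jumps at $t_k$ that leaves $X(t_k-)$ outside $L_k^{-1}v_k$; this is handled by the finiteness of the reachable $d_k$-values on a bounded trajectory together with a Borel--Cantelli argument controlling the (finitely many) excursions out of $L_k^{-1}v_k$, using that the integrated guided intensities of non-returning reactions are finite while those of returning reactions diverge.
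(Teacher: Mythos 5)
Your proposal follows essentially the same route as the paper's proof: \Cref{ass:greedy} together with the exponential divergence in \eqref{eq:behavior-loghtilde} of the guided rate of a distance-decreasing reaction forces, from any state outside $L_k^{-1}v_k$, a jump that strictly decreases $d_k(v_k,L_k\,\cdot\,)$ before $t_k$ almost surely, and since only finitely many strict decreases are possible the process must reach $L_k^{-1}v_k$ before $t_k$. The only cosmetic differences are your induction/strong-Markov framing and your excursion-counting endgame, where the paper instead passes to the limit of the events $B_s=\bigcup_{s\le t\le t_k}\{L_kX(t)=v_k\}$ as $s\uparrow t_k$.
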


		\begin{cor}
			It follows from \Cref{prop:sets} and \Cref{thm:Equivalence} that, under \Cref{ass:greedy}, $\bb{P}^{g}_{t_n}(A[X^{t_n}])=1$. 
		\end{cor}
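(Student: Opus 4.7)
The plan is essentially a one-line invocation of monotonicity of the probability measure $\bb{P}^g_{t_n}$. First, I would apply \Cref{prop:sets}, which gives the set inclusion $A_n \subseteq A[X^{t_n}]$. By monotonicity of $\bb{P}^g_{t_n}$ it then follows that $\bb{P}^g_{t_n}(A[X^{t_n}]) \geq \bb{P}^g_{t_n}(A_n)$. Next, under \Cref{ass:greedy}, \Cref{thm:Equivalence} asserts $\bb{P}^g_{t_n}(A_n)=1$. Combining these two facts yields $\bb{P}^g_{t_n}(A[X^{t_n}]) \geq 1$, which, being a probability, must equal $1$.

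There is essentially no obstacle here: the corollary is a direct combination of an already-established almost sure statement with an already-established set inclusion. The only place where one should be slightly careful is to note that $A[X^{t_n}]$ is measurable (it is defined as a supremum of a sum of finitely many nonnegative adapted processes over a countable dense set of times, using right-continuity, so it lies in $\scr{F}_{t_n}$), which validates the application of $\bb{P}^g_{t_n}$ to it; this measurability is implicit in its earlier use in \Cref{lem:htilde-property}. Beyond this bookkeeping, no further work is required.
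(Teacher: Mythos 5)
Your proof is correct and is exactly the argument the paper intends: the corollary follows immediately by combining the inclusion $A_n \subseteq A[X^{t_n}]$ from \Cref{prop:sets} with $\bb{P}^g_{t_n}(A_n)=1$ from \Cref{thm:Equivalence} and monotonicity of the measure. The paper gives no separate proof beyond this one-line deduction, so your approach matches it; the measurability remark is harmless extra bookkeeping.
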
	

		The choice for $g$ described in this section has two advantages. The first being that the set $A[X^{t_n}]$ on which absolute continuity is obtained is larger than  $A_n$ and the second being that an additional assumption yields equivalence. However, a representation such as  \eqref{eq:htilde-multiple} is not available in this case. When evaluating \eqref{eq:htilde-multiple}, the matrix products and inversions are computed for matrices of size $d\times d$, while \eqref{eq:htilde-multiple-alternative} requires matrix computations where the size increases with the amount of observations. We thus conclude that from a theoretical point of view we may prefer  the construction in this section, but for computational purposes \eqref{eq:htilde-multiple} should be preferred, especially in case $n$ is large.
			\section{Simulation methods}
		\label{sec:simulations}
In this section we discuss methods for simulating the guided process on $[0,T]$. This is a Markov jump process with time-dependent intensity. In case $\|LX(t)-v\|$ is large for $t$ close to $T$, this intensity may blow up. 
 		 
 		 Simulating a Markov jump process is easy when intensities do not depend on time. In this case, given a state $x$ at time $t$, we simply simulate reaction times $\tau_\ell\sim\mathrm{Exp}(\lambda_\ell(x))$ for each reaction, set $\hat{\ell} = \argmin_{\ell\in\scr{R}} \tau_\ell$ and move $t\gets  t+\tau_{\hat{\ell}}$ and $X(t+\tau_{\hat{\ell}})\gets X(t)+\xi_{\hat\ell}$. When $\scr{R}$ contains many reactions, one could alternatively use Gillespie's algorithm, see e.g.\ \cite{gillespie1976, gillespie1977}, which first samples the  reaction time and subsequently the reaction that takes place at that time. 
				
To extend this method to chemical reaction processes with time-dependent intensities, we have to sample reaction times satisfying \eqref{eq:JumpDistribution-Inhomogeneous}. In general this is hard and therefore therefore we consider a Poisson thinning step. This gives \Cref{alg:NextReactionThinning}. The efficiency of \Cref{alg:NextReactionThinning}  will depend on whether sharp bounds $\bar\lambda_\ell$ can be derived. 
		
		\begin{algorithm}
			\caption{Next reaction method for time-inhomogeneous rates with a Poisson thinning step}
			\label{alg:NextReactionThinning}
			\KwIn{$x\in\bb{S}$ and $t\geq 0$ and an upperbound $\bar{\lambda}_\ell$ for $\lambda_\ell(\cdot, x)$.}
			\KwResult{The next reaction $\hat\ell\in\scr{R}$ and the corresponding reaction time $\tau_{\hat{\ell}}$ from the state $(t,x)$.}
			\For{$\ell\in\scr{R}$}{
			Set $t^* = t$ \; 
				Sample $\tau_\ell^* \sim \mathrm{Exp}\left(\bar{\lambda}_{\ell}\right)$ and set $t^*\gets t^*+\tau_\ell^*$\; \label{line:findtime-thinning}
				Sample $U\sim\mathrm{Unif}(0,1)$ \;
				\eIf{$U\leq {\lambda_\ell(t^*,x)}/{\bar{\lambda}_\ell}$}{
					Accept, set $\tau_\ell=t^*-t$ \;
				}{
					Reject and return to line \ref{line:findtime-thinning}  \;
				}\label{line:thinning-endfor}
			}
			Set $\hat\ell = \argmin_{\ell\in\scr{R}} \tau_\ell$ \;
			\Return $\hat\ell$ and $\tau_{\hat{\ell}}$.
		\end{algorithm}
		
		\subsection{Simulation of the guided process for underlying processes with time-homogeneous intensities}
		In many applications, such as the examples in \Cref{subsec:introduction-examples}, the intensities $\lambda_\ell,\, \ell\in\scr{R}$ of the underlying process only depend on the state $x$ and don't have a direct dependence on $t$. Here we consider this case and present a method for simulating the guided process in such a scenario. Note that if the underlying rates are time-dependent, but bounded, these upper bounds can easily be included in the thinning step. 
	
		\subsubsection{Special case: guided process induced by $g_\eps$}	 
We consider the guided process with $\lambda_\ell^{g_\eps}=\alpha^{g_\eps}_\ell\lambda_\ell$, where 	$\alpha^{g_\eps}_\ell$ is defined in \Cref{eq:guided_rate_worked_out}. As  the map $t\mapsto \alpha_\ell^{g_\eps}(t,x)$	
		is monotone, given $(t,x)$, an  upper bound can be found at either $(t,x)$ or $(T,x)$. Unfortunately, the upper bound at $T$ is typically far from sharp, hampering efficiency, especially  when $\eps$ is chosen to be small. To resolve this issue, we employ \Cref{alg:NextReactionThinningBridge}. 	In lines \ref{line:findtime-thinningBridge-generatedelta}--\ref{line:findtime-thinningBridge-generatedeltaEnd}, $\lambda_\ell^{g_\eps}(\cdot,x)$ is increasing in time. Thus we can choose $\delta$ such that $t+\delta<T$ and use that, on the interval $[t, t+\delta]$, $\lambda_\ell^{g_\eps}(\cdot, x)$ is upper bounded by $\lambda_\ell^{g_\eps}(t+\delta, x)$.  Hence, on $[t,t+\delta]$, we apply the thinning property for an inhomogeneous Poisson process with rate $\lambda_\ell^{g_\eps}(\cdot, x)$ and, if no reaction occurs in this interval, we move $t$ to $t+\delta$. Typically, once $LX(t)=v$ for $t$ near $T$ the map $s\mapsto g(s,x+\xi_\ell)/g(s,x)$ will be decreasing for choices of $g$ considered in this article.  If at line \ref{line:findtime-thinningBridge-generatedelta} $LX(t) \neq v$ {\it and} $T-(t+\delta)\le \eps$ we reject the sampled path for specified small $\eps>0$. 
	
		\begin{algorithm}[h]
		\caption{Next reaction method with a Poisson thinning step for guided rates where the original intensity is independent of time}
		\label{alg:NextReactionThinningBridge}
		\KwIn{$x\in\bb{S}$ and $t\geq 0$. }
		\KwResult{The next reaction $\hat\ell\in\scr{R}$ and the corresponding reaction time $\tau_{\hat{\ell}}$ from the state $(t,x)$.}
		Set $\scr{R}_+=\{ \ell\in\scr{R}\mid\lambda_\ell(x)>0\}$ as the set of possible reactions to occur\;
		\For{$\ell\in\scr{R}_+$}{
			\eIf{$s\mapsto \frac{g(s,x+\xi_\ell)}{g(s,x)}$ is decreasing in $s$ on $[t,T)$}{
				Apply lines \ref{line:findtime-thinning}-\ref{line:thinning-endfor} of \Cref{alg:NextReactionThinning} with $\bar{\lambda}_\ell = \lambda_\ell^g(t,x)$\;
			}{
				Set $t^* = t$ and $t_{\mathrm{start}} = t$ \;
				Choose $\delta>0$ such that $t+\delta < T$ \; \label{line:findtime-thinningBridge-generatedelta}
				Sample $\tau_\ell^*\sim \mathrm{Exp}\left(\lambda_\ell^{g}(t+\delta, x)\right)$ and set $t^* \gets t^*+\tau_\ell^*$ \; \label{line:findtime-thinningBridge-generatetau}
				Sample $U\sim\mathrm{Unif}(0,1)$ \;
				\eIf{$t^* > t+\delta$}{
					Set $t\gets t+\delta$ and return to line \ref{line:findtime-thinningBridge-generatedelta}
				}{
					\eIf{$U\leq \lambda_\ell^{g}(t^*, x)/\lambda_\ell^{g}(t+\delta,x)$}{
						Accept and set $\tau_\ell = t^* - t_{\mathrm{start}}$\;
					}{
						Reject and return to line \ref{line:findtime-thinningBridge-generatetau}
					}
				}\label{line:findtime-thinningBridge-generatedeltaEnd}
			}
		}
		Set $\hat\ell = \argmin_{\ell\in\scr{R}_+} \tau_\ell$ \;
		\Return $\hat\ell$ and $\tau_{\hat{\ell}}$.
	\end{algorithm}
				
		\subsubsection{Choice of $\delta$} 
We now explain that  $\delta$ can be chosen such that a minimum acceptance rate	is attained. 

The acceptance rate of a proposed time $\tau_\ell$ is given by 
		\begin{equation} 
			\label{eq:choicedelta-eta1}
			\eta = \frac{\lambda_\ell^{g_\eps}(t+\tau_\ell,x)}{\lambda_\ell^{g_\eps}(t+\delta,x)} 
			\end{equation}
		A direct computation yields that 
		\begin{equation}\label{eq:choicedelta-eta2}
			\eta = \exp\left( \frac{d(v,L(x+\xi_\ell))^2-d(v,Lx)^2}{2(T+\eps-t-\delta)} -  \frac{d(v,L(x+\xi_\ell))^2-d(v,Lx)^2}{2(T+\eps-t-\tau_\ell)}\right),
		\end{equation}
		Upon solving \eqref{eq:choicedelta-eta2} for $\delta$, we find that 
		\[ \delta = T+\eps-t-\left(\frac{2\log\eta}{d(v,L(x+\xi_\ell))^2-d(v,Lx)^2} + \frac{1}{T+\eps-t-\tau_\ell}\right)^{-1} \]
		Now $\tau_\ell$ is not known when choosing $\delta$, but since $\tau_\ell \geq 0$, we obtain the  desired acceptance rate $\eta$ by choosing
		\begin{equation}
			\label{eq:choicedelta-delta}
			\delta\geq T+\eps-t-\left(\frac{2\log\eta}{d(v,L(x+\xi_\ell))^2-d(v,Lx)^2} + \frac{1}{T+\eps-t}\right)^{-1} .
		\end{equation}

		\subsection{Simulation studies}
			
			Julia written code for the simulation examples of this section is  available in  \cite{crmp}.			\subsubsection{Death model and comparison to \cite{golightly2019efficient}}
				We consider the pure death process of \Cref{ex:deathprocess} and estimate the  distribution of $X(T)$ conditional on $X(0)=x_0$. For convenience, we denote by $g^v$ the guiding term $g$ chosen for the conditioning $X(T)=v$. By \Cref{corr:likelihood}, for any $v\in\bb{S}$
				\[ p(0,x_0;T,v) = g^v(0,x_0)\bb{E}^{g^v} \left[\frac{\Psi_T^{g^v}(X)}{g^v(T,X(T))}\ind_{\{X(T)=v\}} \right] \]
				Hence, for large $N$, we can estimate $p(0,x_0;T,v)$ by sampling $X_1,\dots, X_N$ from $\bb{P}^{g^v}$ using \Cref{alg:NextReactionThinningBridge} and computing 
				\begin{equation}
				\label{eq:deathprocess-estimators}
				\hat{p}(v) = \frac{1}{N} \sum_{i=1}^N g^v(0,x_0)\frac{\Psi_T^{g^v}(X_i)}{g^v(T, X_i(T))}\ind_{\{X_i(T)=v\}} 
				\end{equation}

It is a well-known result that $X(T)  \sim \mathrm{Binom}\left(x_0, e^{-cT}\right)$, so we will use this mass function for {comparison. In} the following experiment, we use \eqref{eq:deathprocess-estimators} to estimate the {mass function} of the $\mathrm{Binom}\left(x_0, e^{-cT}\right)$-distribution with $x_0=50$, $T=1$ and $c=1/2$. We consider four choices for $g$: 
\begin{itemize}
	\item $g_F$ as in \eqref{eq:g_fearnhead} with $C= 10^{-5}$ for $v<32$ and $C=0.3$ for $v\geq32$ (we comment on this choice of $C$ below).  
	\item $g_{\mathrm{LNA}}$ from the LNA method with restart as in \eqref{eq:guiding_term_LNA}, with $C =  10^{-5}$.
	\item  $g_\eps$ chosen using a scaled diffusion as in  \eqref{lem:g_epsilon}. We took $\eps = 10^{-5}$. For a given value of $v$, the tuning parameter $a=\sigma^2$ appearing in $g_\eps$  was chosen as $2.5(x_0 -v)$. This was found to be  roughly the optimum of the map $a\mapsto g^v(0,x_0)\bb{E}^{g_{\eps}}\Psi_T^{g^v}(X)\ind_{\{X(T)=v\}}/g^v(T,X(T))$, where the expectation was estimated using $100$ forward simulated paths and  $v$ was taken to be the $1\%$, $50\%$ and $99\%$-quantiles  of $X(T)$. 

	\item $g$ chosen using the density of a reversed (decreasing) constant rate {Poisson} process with rate constant $\theta$. We took $\theta=c v$ (this  choice ensures equivalence by  Theorem 4.2 of \cite{corstanje2021conditioning}). 
\end{itemize}
	For forward simulation under $\bb{P}^g$ we employed \Cref{alg:NextReactionThinningBridge} where for $g_\eps$, $\delta$ was chosen according to (\ref{eq:choicedelta-delta}). For the other choices of $g$  such a closed form expression cannot be derived, and  for  $t<T$ we simply took  $\delta = \frac{T-t}{2}$. This does not affect the validity of the algorithm, only the acceptance probability of a sampled reaction time. 
				
{\it Results:}		We took Monte-Carlo sample size $N=15000$. 		The estimated probability mass functions are depicted in  \Cref{fig:deathprocess-density_estimates}. This figure confirms that $\mathbb{P}_T^h\ll \mathbb{P}_T^g$ for all choices of $g$ considered. 
The percentages of paths conditioned on $X(T)=v$ and actually ending in $v$ are depicted in \Cref{fig:deathprocess-hitting_probabilities}
  for a range of values of $v$. We now comment on the choice of $C$ for the Fearnhead guiding term. For $C=10^{-5}$ we noticed that conditioning on high values of $v$ caused the guided process to  jump very closely to $T$ with high probability. This resulted in numerical instability when computing the integral appearing in $\Psi_T^g$. For that reason, we took a larger value of $C$ in case $v$ is large. A side effect of that is that there is less strong forcing to hit $v$ at time $T$ which explains the lower fractions of paths ending in $v$ for $v\ge 32$. 

We observe that the overall error of the Poisson guiding term is lowest and the diffusion guiding term performs particularly well for low values of $v$ for which the process has to make a lot of jumps. However, when conditioning on high values, the amount of sample paths that end up in the correct state is low, leading to a larger variance of $\hat{p}(v)$ defined in  \Cref{eq:deathprocess-estimators}.  The performance of the Poisson guiding term is explained due to the typically lower guiding term, which is exactly $0$ when the conditioning is reached at a time prior to $T$. This property is not shared by the other choices considered.

In \Cref{tab:deathprocess_squared_errors} we report the mean squared errors $\frac{1}{\#v}\sum_v (q(v)-\hat{p}(v))^2$, with $q$ denoting the probability mass function of the $\mathrm{Binom}(x_0,e^{-cT})$-distribution. 
				
				\begin{figure}[h]
				\centering
				\includegraphics[width = \textwidth]{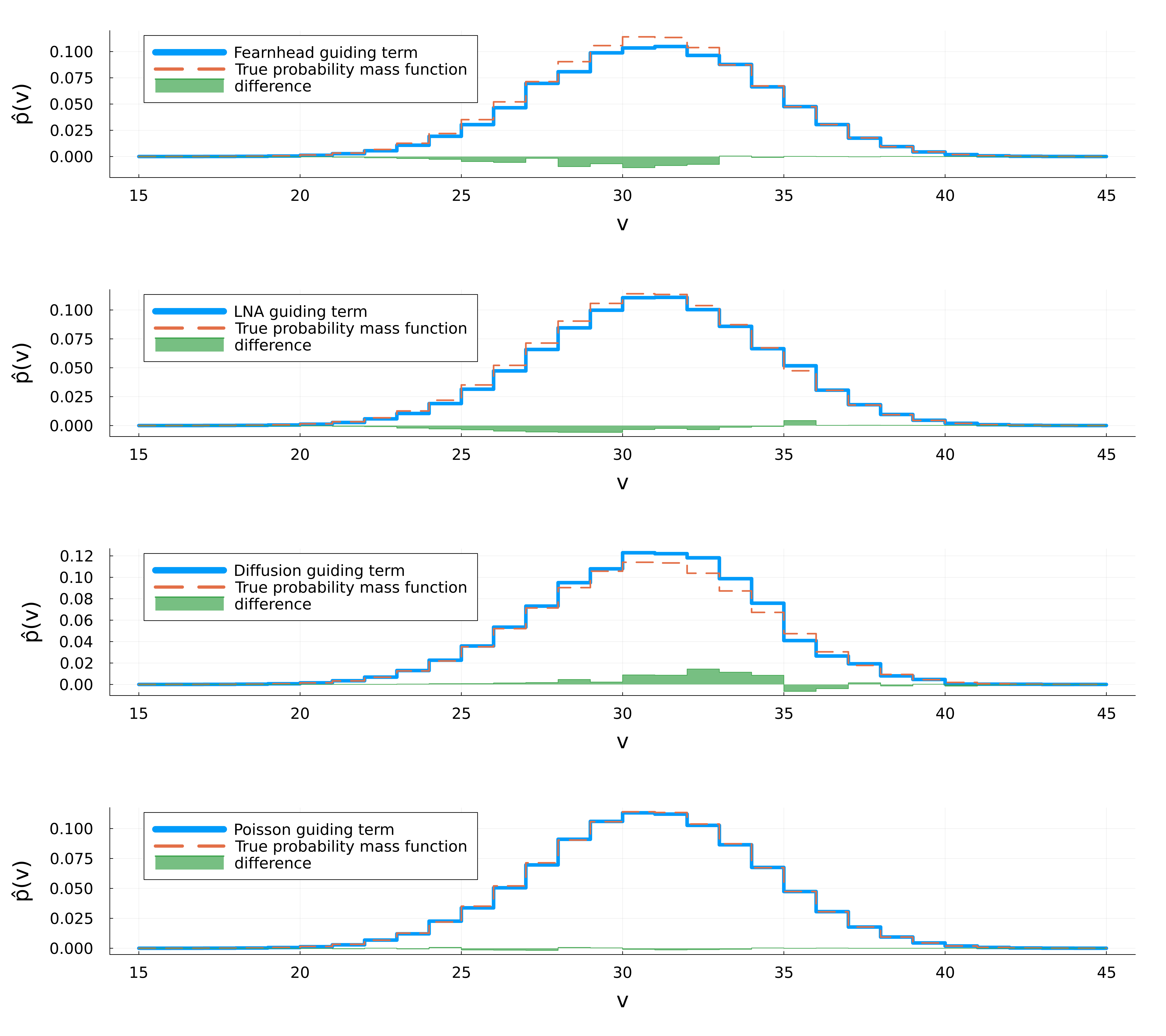}
				\caption{Estimates of the probability mass function of $X(T)\mid X(0)=x_0$ using the guiding functions $f$ from the LNA method, a scaled diffusion and a Poisson process. The upper barplot is the true density. For each $v$, we estimated using \eqref{eq:deathprocess-estimators} with $N = 15000$.}\label{fig:deathprocess-density_estimates}
				\end{figure}
				
				\begin{figure}[h]
				\centering
				\includegraphics[scale=0.8]{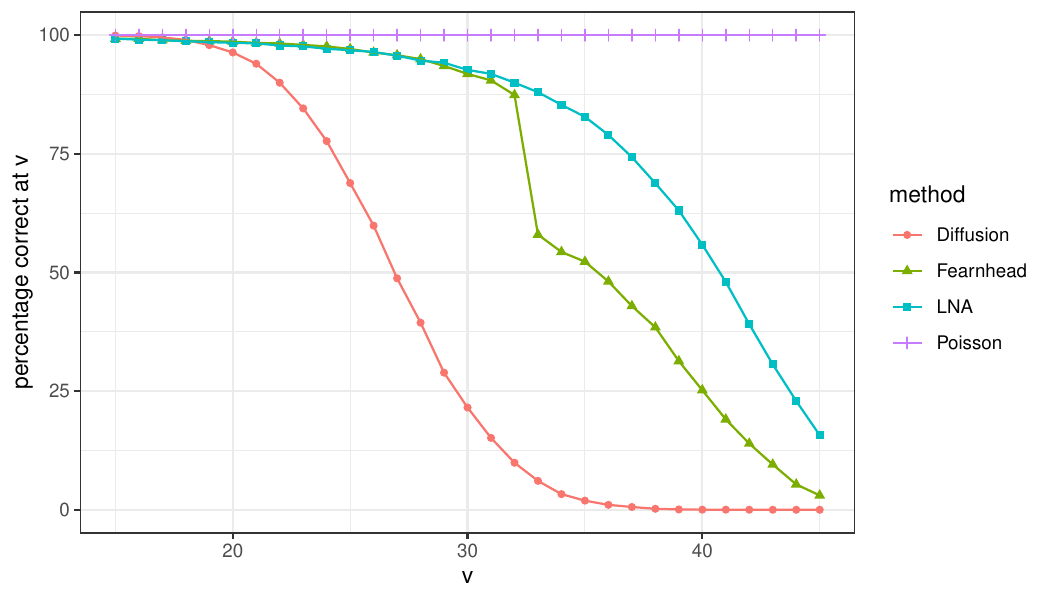}
				\caption{The percentage of paths ending in the point of conditioning ($v$) versus $v$ for the four  methods considered  (with the same settings as in \Cref{fig:deathprocess-density_estimates}).}
				\label{fig:deathprocess-hitting_probabilities}
				\end{figure}

				\begin{table}[h]
				\caption{Mean squared errors of the estimates from \Cref{fig:deathprocess-density_estimates} for each of the methods}
				\begin{tabular}{|c|c|c|}
					\hline
					Method & Mean Squared Error \\ \hline 
					Fearnhead & $1.5\cdot 10^{-5}$ \\
					LNA (with restart) & $6.5\cdot 10^{-6}$ \\
					Diffusion guiding term & $2.1\cdot 10^{-5}$ \\
					Poisson guiding term & $4.0\cdot 10^{-7}$ \\  \hline
				\end{tabular}
				\label{tab:deathprocess_squared_errors} 
				\end{table}

		\subsubsection{Gene transcription and translation}
		
			We consider the \textit{GTT}-model presented in \Cref{ex:GTT}. \Cref{fig:GTTBridge-oneObservation} contains realizations of the process $X$ under $\bb{P}^{g}$ conditioned to hit $(1,11, 56)$ at time $T=1$. We used guiding induced by $g_\eps$ with $\eps=10^{-5}$ and $a = a_{\mathrm{CLE}}(0,x_0)$. The plot shows 10 sampled trajectories. Out of $1000$ more trajectories sampled, all of these end in the correct point $X(T)=(1,11,56)$.

		\begin{figure}[h]
			\centering
			\includegraphics[width = \textwidth]{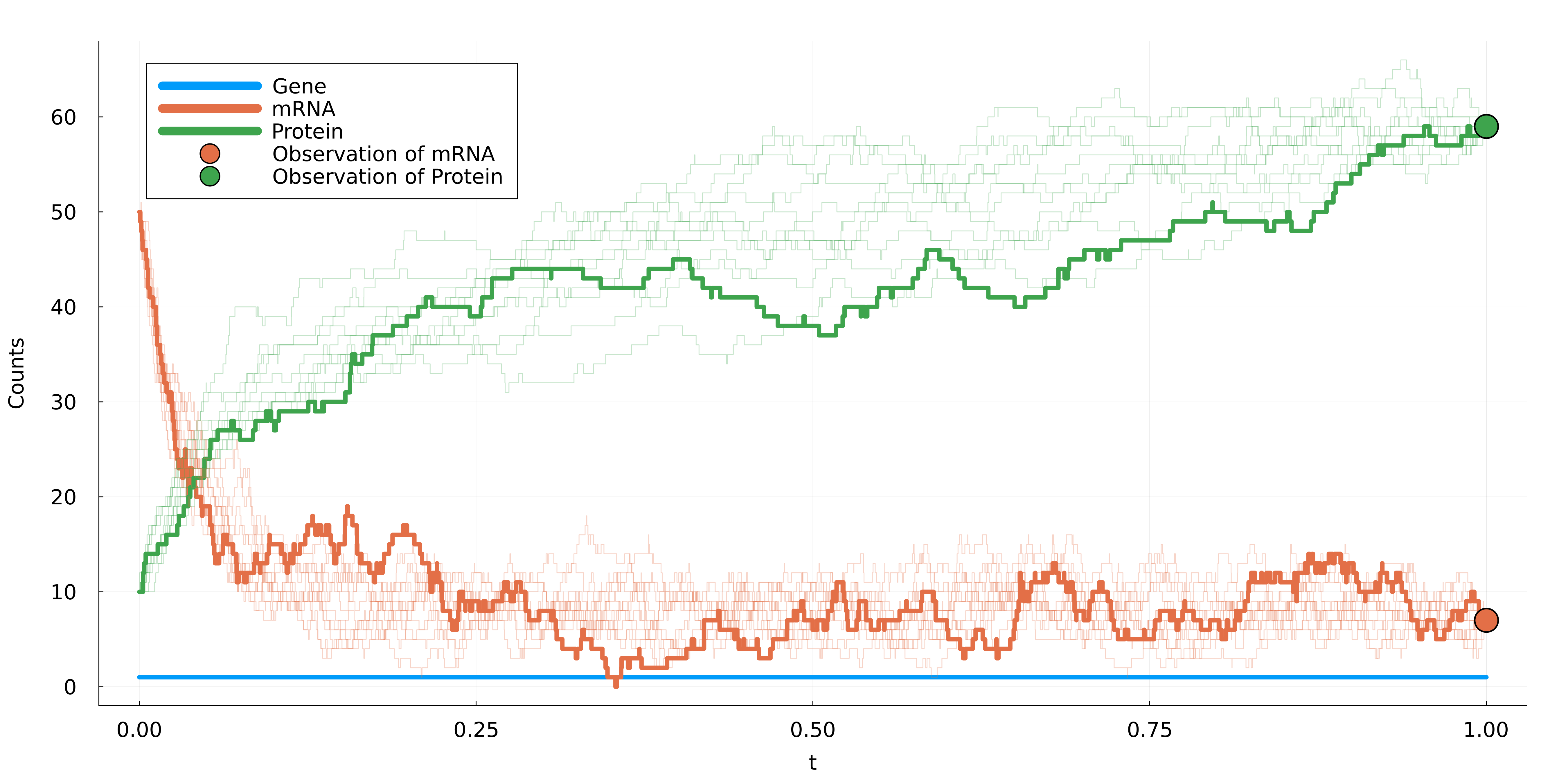}
			\caption{Realization of a guided process starting from $x_0=(1,50,10)$ conditioned to be at $(1,10,50)$ at time $T=1$ of the \textit{GTT}-model from \Cref{ex:GTT} with $\kappa_1 = 100$, $\kappa_2 = 10$, $d_M = 25$ and $d_P=1$. The thick line is the original  (unconditioned) process.}
			\label{fig:GTTBridge-oneObservation}
		\end{figure}

	Next, from a simulated forward path 	
		we chose  15 times at random and saved the values of randomly chosen components of the process. Taking these values as observations  we show in \Cref{fig:GTTBridge-15observations} multiple realisations of the guided process. Out of  $1000$  simulated trajectories we found that all of those passed through  all partial observations.
		
		\begin{figure}[h]
			\centering
			\includegraphics[width = \textwidth]{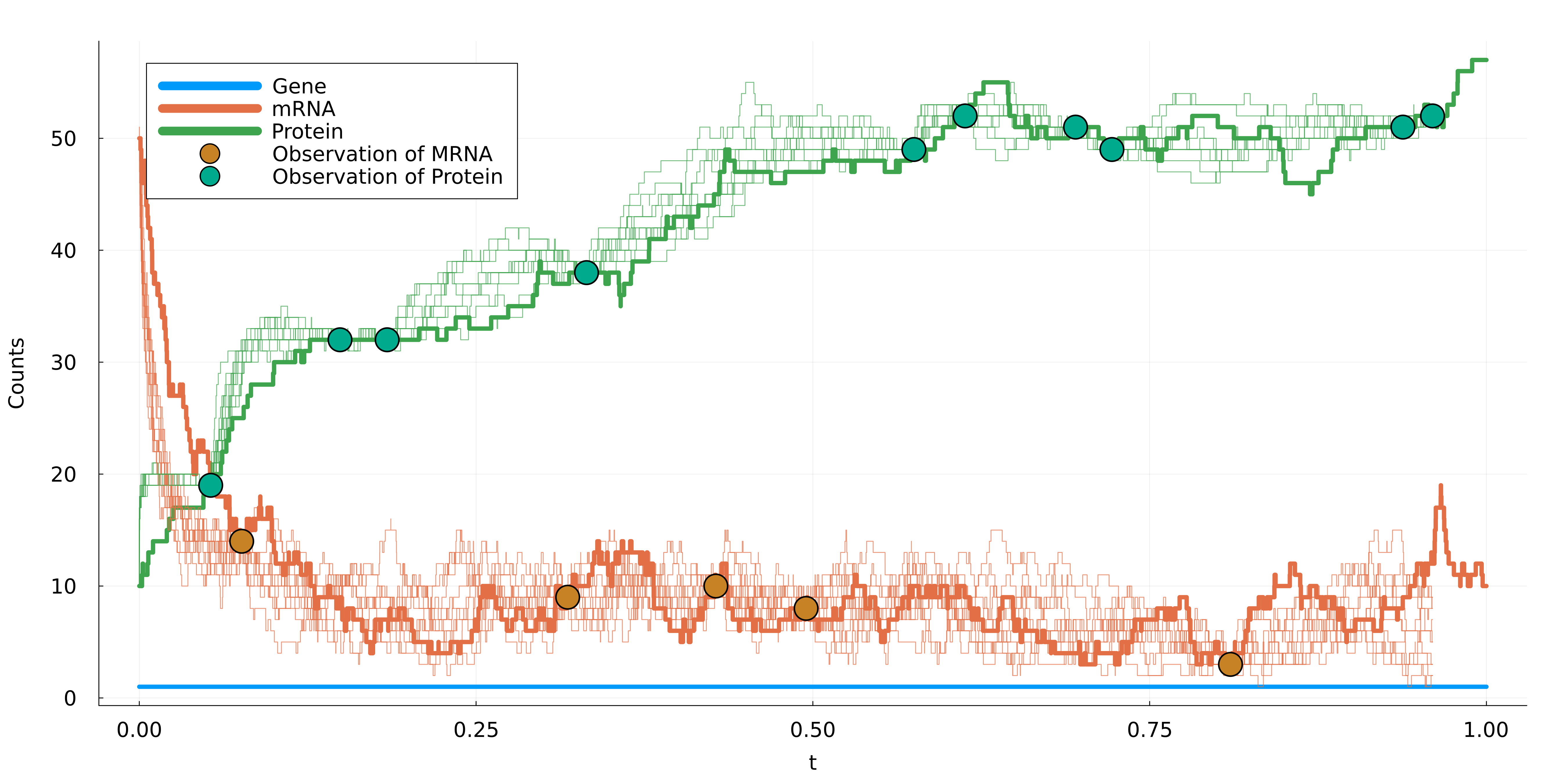}
			\caption{Realization of a guided process through 15 arbitrary observations, with the same settings as in \Cref{fig:GTTBridge-oneObservation}.}
			\label{fig:GTTBridge-15observations}
		\end{figure}

	\subsubsection{Enzyme kinetics}
		As described in \Cref{ex:Enzyme-kinetics}, this is a very interesting example as it contains a monotone component as well as absorbing states, where no more reactions can occur. For example, the state $X(t) = (0,20,0,32)$ can be reached with a sequence of reactions from $x_0$, but no further reactions are possible in this state. 
		We use this example to compare various guiding terms. 
		We choose the starting point $x_0 = (12,10,10,10)$ and parameters $(\kappa_1, \kappa_2, \kappa_3) = (5,5,3)$. We consider three scenarios in which the LNA method with restart, the guiding term obtained from a scaled diffusion presented in \Cref{subsec:guiding_term_scaled_diffusion} and the guiding term in which one of the components is replaced by a Poisson guiding term as presented in \Cref{sec:poisson}. 
		
		\begin{itemize}
		\item \textbf{Scenario A: }	The process is conditioned to be at $x_{T,0.01} = (0,15,5,27)$ at time $T=1$. $27$ is the $1\%$ quantile of $X_4(T)$, determined through forward simulation. 
		\item \textbf{Scenario B: } The process is conditioned to be at $x_{T,0.5} = (0,19,1,31)$ at time $T=1$. $31$ is the $50\%$ quantile of $X_4(T)$, determined through forward simulation.
		\item \textbf{Scenario C: } The process is conditioned to be at $x_{T,0.99} = (0,20,0,32)$ at time $T=1$. $32$ is the $99\%$ quantile of $X_4(T)$, determined through forward simulation and $x_{T,0.99}$ is also an absorbing state of the process. 
		\end{itemize}
		
		First we simulate 100 trajectories of each process and check the percentage of paths that satisfy $X(T) = x_{T,q}$ for $q=0.01,0.5,0.99$. We used the same scheme for simulating the guided process as for the death process. For the diffusion guiding term, we took $a = a_{\mathrm{CLE}}(0,x_0)$. For the Poisson guiding term, the auxiliary process from which $g$ is obtained contains a scaled diffusion in the $3$ components and a Poisson processes in the third component. The diffusion is scaled by a matrix $a$, in which we used the first three rows and the first three columns of $a_{\mathrm{CLE}}(0,x_0)$. The intensity of the Poisson component was taken as $\lambda_3(0,x_0)$, which is a lower bound of the reaction intensity for the reaction that induces the monotone component.  The results are summarised in \Cref{fig:percentages}.
		
		\begin{figure}[h]
		\centering
		\includegraphics[width = 0.8\textwidth]{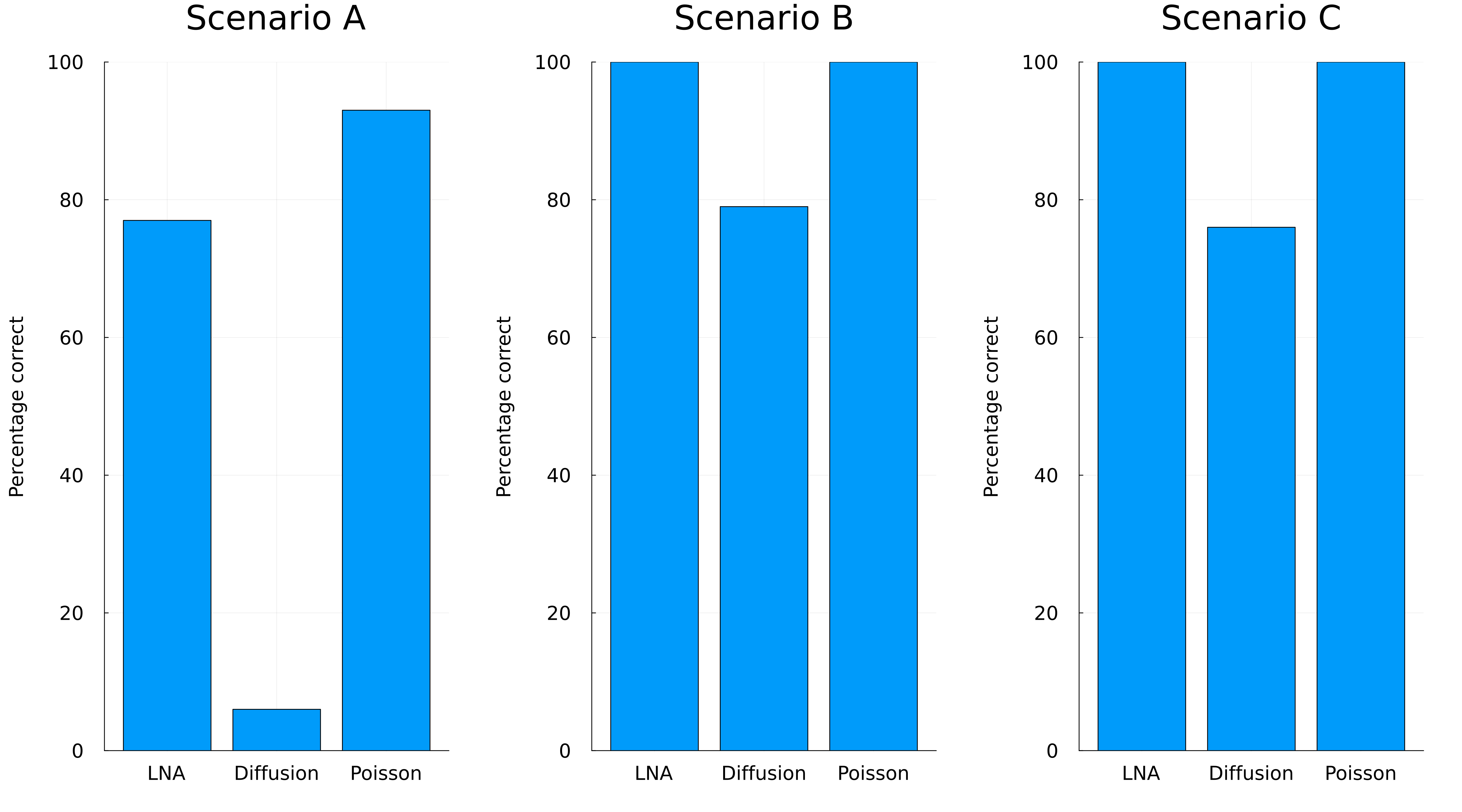}
		\caption{Percentages out of $100$ trajectories that satisfy the conditionings set in scenario's \textbf{A}, \textbf{B} and \textbf{C}.}
		\label{fig:percentages}	
		\end{figure}

Similar to the death process in \Cref{fig:deathprocess-density_estimates}, we see that the diffusion guiding term and the LNA guiding term struggle when the monotone component is conditioned  not to make many jumps.
		
We used the same method as for the death processes to estimate $p(0,x_0 ; T, v)$  for scenario's A, B and C. In each scenario, we computed $\hat{p}$ through \eqref{eq:deathprocess-estimators}. 
		In each scenario, we make 200 estimates for $p(0,x_0 ; T, v)$  by computing $\hat{p}$ with $N=1000$ processes and we compared the LNA method (without restart cf. Section 4.3 of \cite{golightly2019efficient}) with the diffusion guiding term with $\eps = 10^{-5}$ and $a = 100a_{\mathrm{CLE}}(T,x_{T,q})$ for $q=0.01,0.5,0.99$. For the LNA method we assumed extrinsic noise $C=2000I$, $C=500I$ and $C=200I$ for scenario's A, B and C, respectively. Histograms of the estimates are given in \Cref{fig:enzyme-kinetics-estimates-A}, \Cref{fig:enzyme-kinetics-estimates-B} and \Cref{fig:enzyme-kinetics-estimates-C}. The MSEs of the estimates are summarised in \Cref{tab:enzyme-kinetics-MSEs}. 
		
		\begin{figure}[h]
		\centering
		\includegraphics[width = 0.6\textwidth]{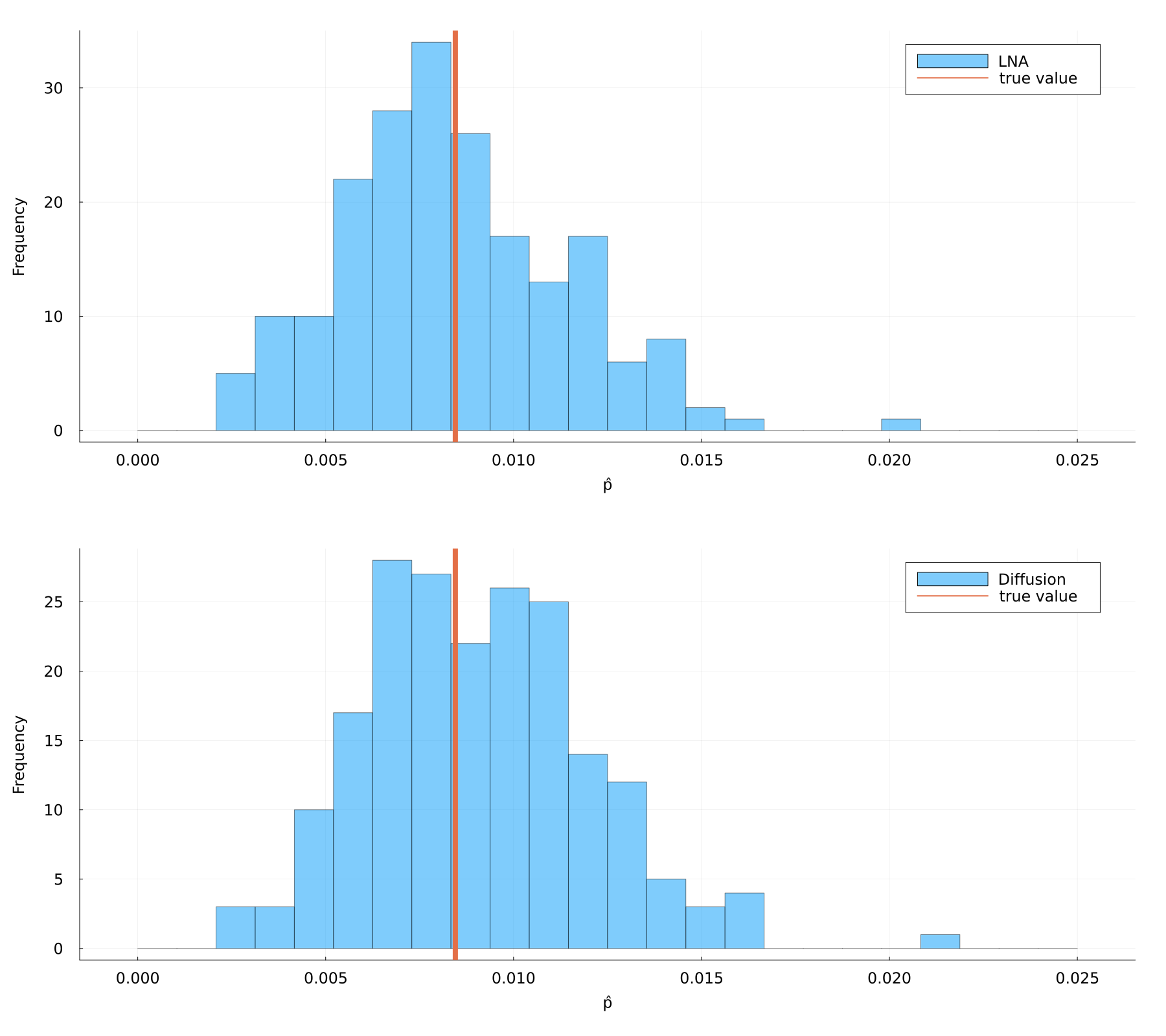}
		\caption{Histograms of $200$ estimates for $p(0,x_0;T,v)$ in scenario A using the LNA method without restart (top) and and the diffusion method (bottom). The true value was estimated using 10,000 forward simulations. }
		\label{fig:enzyme-kinetics-estimates-A}
		\end{figure}
		
		\begin{figure}[h]
		\centering
		\includegraphics[width = 0.6\textwidth]{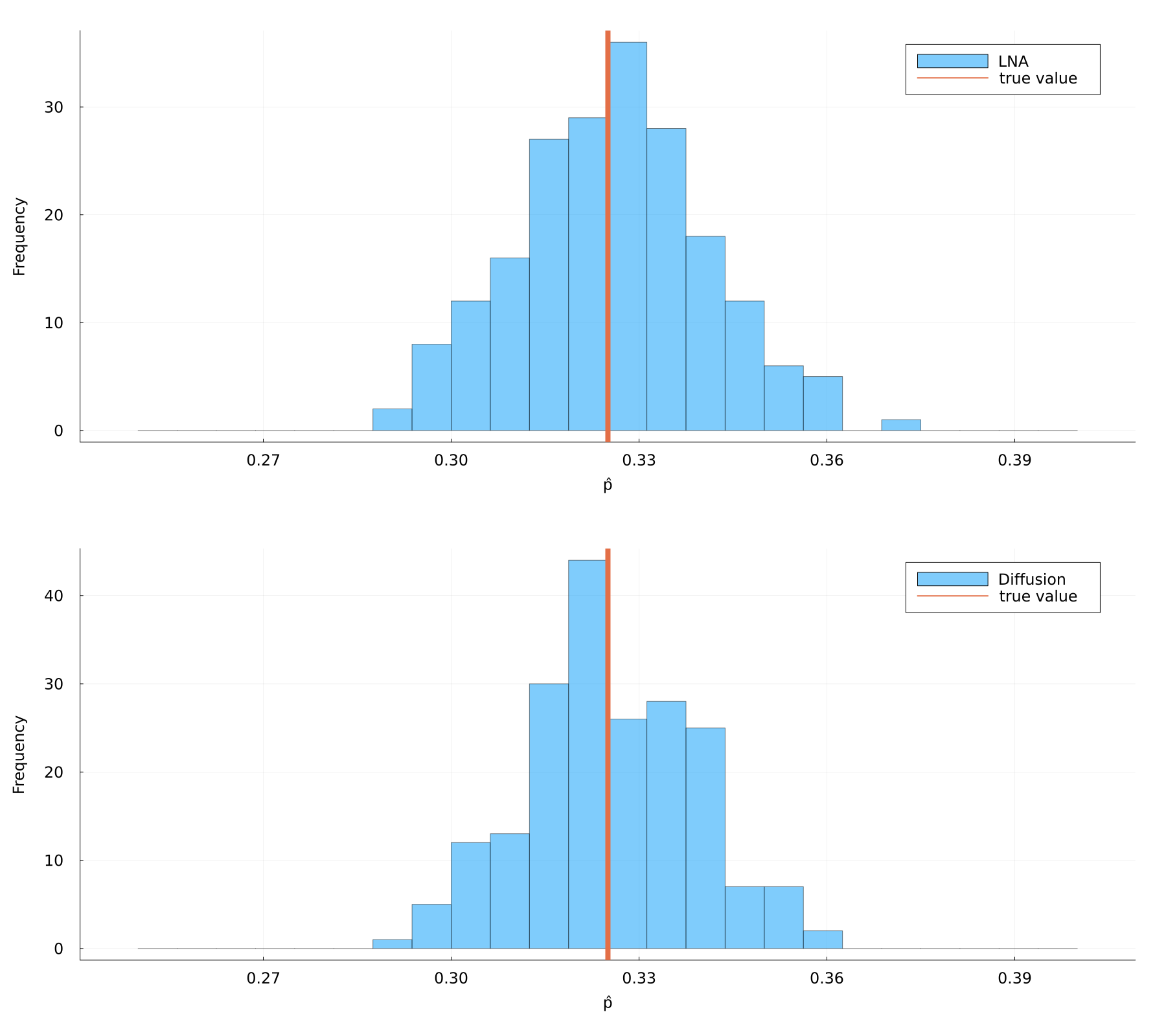}
		\caption{Histograms of $200$ estimates for $p(0,x_0;T,v)$ in scenario B using the LNA method without restart (top) and and the diffusion method (bottom). The true value was estimated using 10,000 forward simulations. }
		\label{fig:enzyme-kinetics-estimates-B}
		\end{figure}		
		\begin{figure}[h]
		\centering
		\includegraphics[width = 0.6\textwidth]{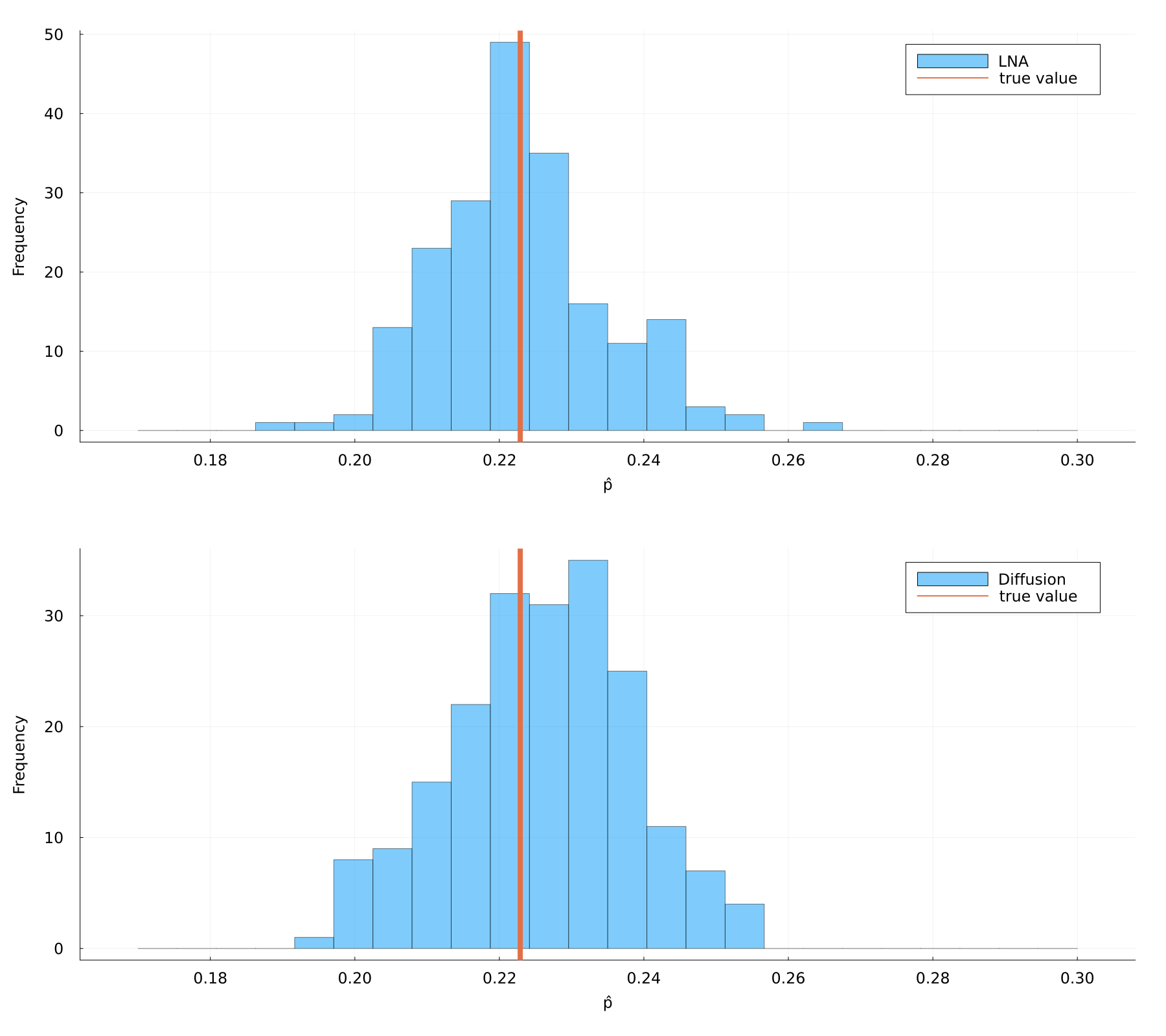}
		\caption{Histograms of $200$ estimates for $p(0,x_0;T,v)$ in scenario C using the LNA method without restart (top) and and the diffusion method (bottom). The true value was estimated using 10,000 forward simulations. }
		\label{fig:enzyme-kinetics-estimates-C}
		\end{figure}
		
			\begin{table}[h]
						\caption{Mean  Squared 
						Error (MSE) of the estimates from Figures \ref{fig:enzyme-kinetics-estimates-A}, \ref{fig:enzyme-kinetics-estimates-B} and \ref{fig:enzyme-kinetics-estimates-C} for each of the methods}
				\begin{tabular}{|c|c|c|c|}
					\hline
					Method & MSE scenario A & MSE scenario B & MSE scenario C \\ \hline 
					LNA (without restart) & $8.8\cdot 10^{-6}$ & $2.3\cdot 10^{-3}$ & $1.3 \cdot 10^{-3}$\\
					Diffusion guiding term & $9.0\cdot 10^{-6}$ & $1.8 \cdot 10^{-3}$ & $1.6 \cdot 10^{-3}$ \\ \hline
				\end{tabular}
	
				\label{tab:enzyme-kinetics-MSEs} 
				\end{table}
\section{Discussion}

In this paper we have provided sufficient conditions for constructing valid guided processes, where ``valid'' refers to 
{the law of the true conditioned process being absolutely continuous with respect to the law of the guided process induced by $g$ with the laws as defined in \Cref{sec:doobh}}. We have presented various choices of $g$ and conclude that among those there is no best choice in terms of closeness to the true conditioned process and computational cost. When used within a sequential Monte Carlo or Markov chain Monte Carlo algorithm, a mixture of proposals may be beneficial. The inherent discreteness of chemical reaction processes makes it a hard problem, but it works to our advantage in the sense that guided processes can be constructed on $[0,T]$ where $g(T,x)$ is well defined. This is accomplished by using a guiding term derived from conditioning a diffusion process that is observed with (small) extrinsic noise. The guiding term defined in \Cref{sec:poisson}  has the additional advantage that   monotonicity can be exploited for   simulating paths of the guided process efficiently. 

In the analogous problem of continuous-discrete smoothing for diffusion processes (see e.g.\ \cite{mider2020continuousdiscrete}, \cite{beskos-mcmc-methods}, \cite{golwilk}) there exists a simple random-walk like MCMC-sampler on path space to update guided processes: the preconditioned Crank-Nicolson scheme. Unfortunately, we are not aware of a similar construct for chemical reaction processes.

	\section{Acknowledgments}
	The authors would like to thank the anonymous referees for their helpful comments. We also thank M.\ Schauer (Gothenburg University and Chalmers University of Technology, Sweden) for stimulating discussions on the simulation algorithms. 
	
	\section{Declarations}
		This work is part of the research project "Bayes for longitudinal data on manifolds" with project number OCENW.KLEIN.218, which is financed by the Dutch Research Council (NWO).
				The authors have no competing interests to declare that are relevant to the content of this article.
	
	\appendix

	\section{Proof of \Cref{thm:positive-prop-1obs}}
	\label{app:proof-of-path-exists}

	The proof relies on the existence of a chain of reactions that result in the conditioning $LX(T)=v$. We formalize this in \Cref{def:connection}.
	
	\begin{defn}
		\label{def:connection}
		For $x,y\in\bb{S}$, we say that $y$ can be reached from $x$ within a time interval $(a,b)\subseteq(0,T)$, denoted $ x\hookrightarrow y$ in $(a,b)$, if there is a finite collection $\{ \ell_1,\dots,\ell_I\}\subseteq\scr{R}$ of reactions and a partition $U_1,\dots, U_I$ of disjoint intervals for $(a,b)$ such that $y=x +\sum_{i=1}^I \xi_{\ell_i}$ and for all the set $i$, $\mathrm{supp}\, \lambda_{\ell_i}\left(x+\sum_{j<i}\xi_{\ell_j}\right) \cap U_i$ is of positive Lebesgue measure. 
	\end{defn}
	
	Since we consider a process conditioned on $LX(T)=v$, we can assume there exists an $x_T\in\bb{S}$ such that $Lx_T=v$ and $x_0\hookrightarrow x_T$ in $(0,T)$. \\
	
	\begin{proof}[Proof of \Cref{thm:positive-prop-1obs}]	
	 Notice that, given any $t<T$, 
	\[ \bb{P}^{g}\left(X(s)=x_T\text{ for all }s\in[t,T]\mid X(t)=x_T\right) = \exp\left(-\int_t^T\sum_{\ell\in\scr{R}} \lambda_{\ell}^{g}(s,x_T)\dd s \right). \]
	
	Now it can be shown that $\sum_{\ell\in\scr{R}} \lambda_\ell^{g}$ is bounded through \Cref{lem:Bounds-on-htilde} and Assumption \Cref{ass:AssumptionsOnProcess-FiniteSum} that the right hand side is strictly positive and hence, it suffices to show that $\bb{P}^{g}\left(\exists t<T : X(t)=x_T\right)>0$. In the remainder of this proof, we show this by showing that the probability that the chain of reactions described in \Cref{def:connection} connection $x_0$ to $x_T$ occurs is positive. \\
	
	Let $\ell_1,\dots,\ell_I$ be the collection of reactions described in \Cref{def:connection} and let $U_1,\dots,U_I$ be the respective partition of $(0,T)$. For the first reaction to occur, we denote by $T_\ell^{(1)}$ the reaction time for each reaction $\ell\in\scr{R}$ and let $L_1$ be the first reaction to occur. That is,
	\[ L_1 = \argmin_{\ell\in\scr{R}} T_\ell^{(1)},\qquad\text{where}\qquad \bb{P}^{g}\left(T_\ell^{(1)}>t\right) = \exp\left(-\int_0^t \lambda_\ell^{g}(s,x_0)\dd s\right). \]
	Note that,
	\[ \left\{ T_{\ell_1}^{(1)} \in U_1,\, T_\ell^{(1)}>\sup\, U_1 \text{ for all }\ell\in\scr{R}\setminus\{\ell_1\}\right\} \subseteq \left\{ T_{\ell_1}^{(1)}\in U_1,\, L_1=\ell_1 \right\} \]
	and thus 
	\begin{equation*}
	\begin{gathered}
		\bb{P}^{g}\left( T_{\ell_1}^{(1)} \in U_1,\, L_1=\ell_1\right) \geq \\
		\left( 1- \exp\left(-\int_{U_1}\lambda^{g}_{\ell_1}(s,x_0)\dd s \right)\right)\exp\left( -\int_{U_1} \sum_{\ell\in\scr{R}\setminus\{\ell_1\}} \lambda_\ell^{g}(s,x_0)\dd s\right).
		\end{gathered}
	\end{equation*}
	Since $\lambda_{\ell_1}^{g}(\cdot,x_0)$ has support in $U_1$, the first term is nonzero, while the second term is nonzero by Assumption \ref{ass:AssumptionsOnProcess-FiniteSum}. \\
	
	For the second reaction, we set
	\[ L_2 = \argmin_{\ell\in\scr{R}} T_\ell^{(2)} \qquad\text{where}\qquad \bb{P}^{g} \left( T_\ell^{(2)}>t \mid L_1 \right) = \exp\left( -\int_{T_{L_1}^{(1)}}^t \lambda_{\ell}^{g}(s, x_0+\xi_{L_1}) \dd s\right).  \]
	Through similar reasoning, we deduce that 
	\begin{equation*}
		\begin{aligned}
			&\bb{P}^{g} \left( L_2 = \ell_2 ,\, T_{\ell_2}^{(2)} \in U_2,\, L_1 = \ell_1 ,\, T_{\ell_1}^{(1)} \in U_1\right) \\
			& \quad = \bb{P}^{g}\left( L_2 = \ell_2,\, T_{\ell_2}^{(2)} \in U_2 \mid L_1 = \ell_1,\, T_{\ell_1}^{(1)} \in U_1\right) \bb{P}^{g}\left(L_1 = \ell_1,\, T_{\ell_1}^{(1)} \in U_1)\right) \\
			& \quad \geq \left( 1- \exp\left( -\int_{U_2}\lambda_{\ell_2}^{g}\left(s, x_0 + \xi_{\ell_1}\right)\dd s \right)\right)\exp\left( -\int_{U_2} \sum_{\ell\in\scr{R}\setminus\{\ell_2\}} \lambda_{\ell}^{g} (s, x_0+\xi_{\ell_1}) \dd s  \right) \\
			& \quad \quad \times \left( 1- \exp\left(-\int_{U_1}\lambda^{g}_{\ell_1}(s,x_0)\dd s \right)\right)\exp\left( -\int_{U_1} \sum_{\ell\in\scr{R}\setminus\{\ell_1\}} \lambda_\ell^{g}(s,x_0)\dd s\right).
		\end{aligned}
	\end{equation*}
	
	Upon iteratively repeating this process, we find 
	\begin{gather*}
		\bb{P}^{g}\left(L_i = \ell_i,\, T_{\ell_i}^{(i)} \in U_i ,\, i=1,\dots,I\right) \geq \\
		\prod_{i=1}^I \left( 1- \exp\left(-\int_{U_i}\lambda_{\ell_i}^{g}\left(s, x_0+\sum_{j<i}\xi_{\ell_j}\right)\dd s\right) \right) \\
		\times \exp\left(-\int_{U_i}\sum_{\ell\in\scr{R}\setminus\{\ell_i\}}\lambda_{\ell_i}^{g}\left(s,x+\sum_{j<i}\xi_{\ell_j}\right)\dd s\right).
	\end{gather*}
	The right hand size is strictly positive as a finite product of nonzero terms. Therefore
	$ \bb{P}^{g}\left( \exists t\leq T : X(t)=x_T\right)>0 $, which finishes the proof. 
	\end{proof}	
	
	\section{Proofs for \Cref{sec:equivalence}}
	\label{app:proof-of-main}
	
		\begin{thm}[Theorem 3.3 of \cite{corstanje2021conditioning}]
		\label{thm:AbsoluteContinuityTheorem}
		Suppose that there exist a family of $\scr{F}_t$-measurable events $\{A_j[X^t] \}_j$ for each $t\in [0,t_n)$ so that the following assumptions hold. 
		\begin{enumerate}[label={\color{red} (\ref{thm:AbsoluteContinuityTheorem}\alph*)}]
			\item \label{ass:EquivalenceAssumption-EventSequence1}
			For all $j$ $ s\leq t$, $A_j[X^t]\subseteq A_{j+1}[X^t]$ and $A_j[X^t]\subseteq A_j[X^s]$. 
			\item 	\label{ass:LimitForF}
			For all $k$ and $s\in[0,t_k)$ and $x\in\bb{S}$,
			\[ \lim_{t\uparrow t_k} \bb{E}\left[g(t,X(t))\mid X(s)=x\right]  = \bb{E}\left[g(t_k, X(t_k))\ind\{L_kX(t_k)=v_k\}\right] , \]
			Moreover, $\bb{E}\left[g(t,X(t))\mid X(s)\right]$ is $\bb{P}$-almost surely bounded for all $t$.
			\item \label{ass:EquivalenceAssumption2Limitfraction}
			For all $k=1,\dots,n$,
			\[ \lim_{j\to\infty} \lim_{t\uparrow t_k} \bb{E}^{h}_t \left[\frac{g(t, X(t))}{h(t, X(t))} \ind_{A_j[X^t]} \right]= \frac{\bb{E}\left[g(t_k, X(t_k))\ind\{L_kX(t_k)=v_k\}\ind_{A[X^{t_k}]}\right]}{h(0,x_0)},\]
			where $A_j[X^{t_k}] = \cap_{t\in[0,t_k)} A_j[X^t]$, $A[X^t] = \bigcup_j A_j[X^t]$ and $A[X^{t_k}]=\cap_{t\in[0,t_k)} A[X^t]$.
			\item \label{ass:AlmostSureBoundOnPsi}
			For all fixed $j$, $\Psi_t^g(X)\ind_{A_j[X^t]}$ is $\bb{P}^{g}$-almost surely uniformly bounded in $t$. 
		\end{enumerate}
		
		Then for any bounded measurable function $f$, 
		\begin{equation}
			\label{eq:ConditioningByGuiding-earlierObservations}
			\bb{E}\left[ f(X)\frac{ \bb{E}\left[g(t_k,X(t_k))\ind\{L_kX(t_k)=v_k\}\right] }{h(0,x_0)}\ind_{A[X^{t_k}]} \right] = \bb{E}^{g}\left[ f(X) \frac{g(0,x_0)}{h(0,x_0)}\Psi_{t_k}^g(X)\ind_{A[X^{t_k}]} \right]
		\end{equation}
		for $k=1,\dots,n$. In particular, for $k=n$,
		\begin{equation} 
			\label{eq:ConditioningByGuiding-finalObservation}
			\bb{E}^{h} \left[ f(X)\ind_{A[X^{t_n}]} \right] =  \bb{E}^{g}\left[ f(X) \frac{g(0,x_0)}{h(0,x_0)}\Psi_{t_n}^g(X)\ind_{A[X^{t_n}]} \right].
		\end{equation}
	\end{thm}
	\begin{proof}
		The proof of Theorem 3.3 of \cite{corstanje2021conditioning} can be followed with the new limits in \ref{ass:LimitForF} and \ref{ass:EquivalenceAssumption2Limitfraction} for each of the limits $t\uparrow t_k$. 
	\end{proof}

	\begin{proof}[Proof of \Cref{lem:htilde-property}]
	The proof utilizes \Cref{thm:AbsoluteContinuityTheorem} with
		\begin{equation}
			\label{eq:defV-and-A}
			V(t,x) = \sum_{\ell\in\scr{R}} \lambda_\ell^{g}(t,x) \qquad\text{and}\qquad A_j[X^t] = \left\{ \sup_{0\leq s<t} V(s,X(s))\leq j \right\}.
		\end{equation}
	Then \ref{ass:EquivalenceAssumption-EventSequence1} is satisfied by construction. Lemmas \ref{lem:ProofOfEquivalence-lem1}-\ref{lem:ProofOfEquivalence-lem3} prove the remaining conditions. 	
	\end{proof}
	
	\begin{lem}
	\label{lem:limitassumption-htilde} 
	The map $g$, defined in \eqref{eq:htilde-equivalence}, is such that $g\in\scr{G}$ and for all $k=1,\dots,n$ and $x\in\bb{S}$, 
	\begin{equation} 
	\label{eq:limitassumption-htilde}
	\lim_{t\uparrow t_k} g(t,x) = \begin{cases}
 		g(t_k,x) & \text{if }L_kx=v_k \\
 		0 & \text{otherwise}	
 \end{cases}. \end{equation}
 and $\partial_{t}\log g(t,x)\leq 0$ for all $(t,x)\in(0,t_n)\times\bb{S}$.
\end{lem}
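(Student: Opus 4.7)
The plan is to verify the three claims separately, treating the monotonicity first (the easiest), then the limit dichotomy in \eqref{eq:limitassumption-htilde}, and finally $g\in\scr{G}$ (the hardest).

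On each open interval $(t_{k-1},t_k)$, both $v(t)$ and $L(t)$ from \Cref{rem:htilde-multiple-alternative} are constant in $t$, so
\begin{equation*}
\partial_t\log g(t,x) = -\tfrac12\,(v(t)-L(t)x)^\T\partial_t M(t)\,(v(t)-L(t)x).
\end{equation*}
Differentiating the identity $M(t)M^\dagger(t)=I$ yields $\partial_tM=-M(\partial_t M^\dagger)M$, and from the explicit formula for $M^\dagger$ in \Cref{rem:htilde-multiple-alternative} (with $C_i=0$) I read off $\partial_t M^\dagger(t)=-\tilde L a_k\tilde L^\T$, where $\tilde L=(L_k^\T,\ldots,L_n^\T)^\T$ is the vertical stack of $L_k,\ldots,L_n$. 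Therefore $\partial_tM(t)=M(t)\tilde La_k\tilde L^\T M(t)\succeq 0$, from which $\partial_t\log g\le 0$ is immediate.

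For the limit, I would write $s=t_k-t$ and decompose $M^\dagger(t)$ in the $2\times 2$ block form associated with the index split $\{k,k+1,\ldots,n\}=\{k\}\cup\{k+1,\ldots,n\}$. From the formula, the $(k,k)$-block is exactly $sL_ka_kL_k^\T$, both off-diagonal blocks are $O(s)$, and the bottom-right block converges to a positive definite limit $D_0$ as $s\downarrow 0$. A Schur-complement inversion then shows that the $(k,k)$-block of $M(t)=M^\dagger(t)^{-1}$ blows up like $s^{-1}(L_ka_kL_k^\T)^{-1}$ (the inverse existing because $L_k$ has full row rank and $a_k$ is positive definite), while all other blocks remain bounded. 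Substituting into the quadratic form gives
\begin{equation*}
(v(t)-L(t)x)^\T M(t)(v(t)-L(t)x) = s^{-1}(v_k-L_kx)^\T(L_ka_kL_k^\T)^{-1}(v_k-L_kx)+O(1),
\end{equation*}
from which the dichotomy is immediate. In the case $L_kx=v_k$, the matching of the $O(1)$ limit with $g(t_k,x)=g(t_k+,x)$ follows by noting that the bottom-right block of $M(t)$ tends to $D_0^{-1}=M(t_k+)$.

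Finally, for $g\in\scr{G}$: strict positivity and $t$-differentiability of $g$ on each $(t_{k-1},t_k)\times\bb{S}$ are clear, and the bound $g\le 1$ (the exponent is nonpositive) together with \Cref{ass:AssumptionsOnProcess-FiniteSum} gives $g\in\scr{D}(\scr{A})$. The hard part is showing that $D_t^g$ is a \emph{true} martingale, since $g$ is not bounded away from zero near each $t_k$ on $\{x\colon L_kx\ne v_k\}$, so \Cref{prop:g-property} cannot be applied directly. My plan is to first observe that $D_t^g$ is a nonnegative local martingale by the Palmowski--Rolski generator identity; then to localize via $\tau_N=\inf\{t<t_n\colon g(t,X(t))\le 1/N\}$ to obtain bounded approximating martingales; and finally to upgrade to a true martingale by verifying uniform integrability in $N$ with the help of the monotonicity just proved, splicing across the observation times. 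The subtle point, and the main technical obstacle, is that at each $t_k$ the discontinuity of $g$ on $\{L_kX(t_k-)\ne v_k\}$ must cancel in $D_t^g$ against the divergence of $\int\partial_s\log g\,\dd s$; controlling this cancellation is where the careful use of the monotonicity in the second claim will be essential.
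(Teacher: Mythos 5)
Your treatment of the two claims the paper actually proves matches its argument almost exactly: the monotonicity comes from $\partial_t M(t)=M(t)\,\tilde L a_k\tilde L^{\T}M(t)\succeq 0$ (the paper writes the middle factor as the block matrix $\bigl[L_ia_kL_j^\T\bigr]_{i,j=k}^n$, which is precisely your $\tilde La_k\tilde L^\T$), and the limit dichotomy comes from a Schur-complement expansion of the quadratic form as $t\uparrow t_k$. In one respect you are more careful than the paper: its expansion \eqref{eq:behavior-loghtilde} records the remainder as $o(t_k-t)$, which cannot be right as stated, since on $\{L_kx=v_k\}$ the quadratic form must converge to the generally nonzero value determined by $g(t_k+,x)$ rather than to $0$; your $O(1)$ remainder, together with the observation that the lower-right block of $M(t)$ tends to $D_0^{-1}=M(t_k+)$, gives the correct matching with $g(t_k,x)=g(t_k+,x)$. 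On the third claim, $g\in\scr{G}$, you offer only a plan (nonnegative local martingale, localization at the level sets of $g$, uniform integrability spliced across the observation times), not a proof; be aware that this part remains open in your write-up. That said, the paper's own proof of the lemma is entirely silent on $g\in\scr{G}$ — it establishes only \eqref{eq:limitassumption-htilde} and $\partial_t\log g\le 0$ — so you have not omitted anything the paper supplies, and you have correctly identified the genuine subtlety that \Cref{prop:g-property} is inapplicable here because $g$ is not bounded away from zero near each $t_k$.
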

\begin{proof}
	Through a direct computation using the Schur complement, it can be derived that as $t\uparrow t_k$,
	\begin{equation}
	\label{eq:behavior-loghtilde}
		(v(t)-L(t)x)^\T M(t)(v(t)-L(t)x) = \frac{d_k(v_k,L_kx)^2}{(t_k-t)} + o(t_k-t),
	\end{equation}
		where $d_k$ is the metric on $\bb{R}^{m_k}$ defined as
			\begin{equation}
				\label{eq:metric}
				d_k(x,y) = \sqrt{(y-x)^\T \left(L_ka_k L_k^\T\right)^{-1}(y-x)}, \qquad x,y\in\bb{R}^{m_k}. 
			\end{equation}
	which yields \eqref{eq:limitassumption-htilde}. Since $v$ and $L$ are piecewise constant and $\der{M}{t} = M(t) \Bm L_ia_k L_j^\T \Em_{i,j=k}^n M(t)$ is positive semidefinite, $\partial_t\log g(t,x)\leq 0$. 
	\end{proof}

	\begin{lem}
	\label{lem:ProofOfEquivalence-lem1}
		For all $k=1,\dots, n$, $s\in[0,t_n)$ and $x\in\bb{S}$,
		\[ \lim_{t\uparrow t_k}\bb{E}\left[ g(t,X(t)) \mid X(s)=x\right] = \bb{E}\left[ g(t_k, X(t_k))\ind\{ L_kX(t_k)=v_k \}\mid X(s)=x\right].  \]
		
	\end{lem}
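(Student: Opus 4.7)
The plan is to reduce the claim to dominated convergence, using the pointwise limit of $g$ already established in Lemma \ref{lem:limitassumption-htilde} together with an almost-sure argument that $X(t)$ stabilises before time $t_k$.

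First I would observe that by Theorem \ref{thm:processexists} and Assumption \ref{ass:AssumptionsOnProcess-FiniteSum}, under $\bb{P}(\,\cdot\mid X(s)=x)$ the process $X$ has only finitely many jumps on any bounded interval, and the jump times admit continuous distributions. In particular, $\bb{P}(\text{a jump occurs at exactly } t_k \mid X(s)=x)=0$, so almost surely there exists a (random) $t^\star<t_k$ such that $X(u)=X(t_k-)=X(t_k)$ for every $u\in[t^\star,t_k)$. On this event, for $t$ close enough to $t_k$,
\[ g(t,X(t))\;=\;g(t,X(t_k)), \]
and the deterministic pointwise statement of Lemma \ref{lem:limitassumption-htilde} (applied at $y=X(t_k)$) gives
\[ \lim_{t\uparrow t_k} g(t,X(t))\;=\;g(t_k,X(t_k))\,\ind\{L_kX(t_k)=v_k\}\qquad\bb{P}\text{-a.s.} \]

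Next I would establish uniform domination on $[s,t_k]$. Since $M(t)$ is positive semidefinite, the exponent in \eqref{eq:htilde-equivalence} is nonpositive, so the exponential factor is bounded by $1$. By Lemma \ref{lem:time-scaling} the change of measure is invariant under multiplying $g$ by a differentiable function of time, so we may normalise the proportionality factor to a continuous, strictly positive function on $[0,t_k]$ bounded by some constant $K<\infty$. Hence
\[ 0\;\le\;g(t,X(t))\;\le\;K\qquad\text{for all }t\in[s,t_k], \]
which supplies the integrable dominant needed for the dominated convergence theorem.

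Combining the almost-sure pointwise limit with the uniform bound $K$, dominated convergence yields
\[ \lim_{t\uparrow t_k}\bb{E}\bigl[g(t,X(t))\mid X(s)=x\bigr]\;=\;\bb{E}\bigl[g(t_k,X(t_k))\,\ind\{L_kX(t_k)=v_k\}\mid X(s)=x\bigr], \]
as desired. The main obstacle, I expect, is simply to handle the proportionality in \eqref{eq:htilde-equivalence} cleanly; once one uses Lemma \ref{lem:time-scaling} to fix a bounded normalisation, the rest of the argument is a standard DCT passage to the limit driven by the deterministic endpoint behaviour proved in Lemma \ref{lem:limitassumption-htilde}.
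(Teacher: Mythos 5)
Your proof is correct and rests on the same two ingredients as the paper's: the pointwise limit $\lim_{t\uparrow t_k}g(t,y)=g(t_k,y)\ind\{L_ky=v_k\}$ from Lemma \ref{lem:limitassumption-htilde}, and dominated convergence with a uniform bound on $g$. (The bound is even simpler than you make it: in \eqref{eq:htilde-equivalence} the paper \emph{defines} $g$ to be the exponential of a nonpositive quadratic form, so $g\le 1$ outright and no appeal to Lemma \ref{lem:time-scaling} is needed.) Where you differ is in how the interchange of limit and expectation is justified. The paper writes $\bb{E}[g(t,X(t))\mid X(s)=x]=\sum_{y\in\bb{S}}g(t,y)\,p(s,x;t,y)$ and passes the limit through the sum over the state space, which implicitly also uses continuity of $t\mapsto p(s,x;t,y)$ and a summable dominating sequence. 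You instead argue on path space: the process a.s.\ has finitely many jumps on $[s,t_k]$ and a.s.\ no jump at the deterministic time $t_k$, so the path is constant on a left neighbourhood of $t_k$ and $g(t,X(t))\to g(t_k,X(t_k))\ind\{L_kX(t_k)=v_k\}$ almost surely; DCT on the conditional probability measure then finishes. This is arguably the cleaner version of the interchange. The one point to flag is that ``finitely many jumps on bounded intervals'' does not follow from Assumption \ref{ass:AssumptionsOnProcess-FiniteSum} alone, since Theorem \ref{thm:processexists} permits explosion ($T_\infty<\infty$); strictly you are assuming non-explosion of the unconditioned process on $[0,t_n]$. The paper's own proof makes the same tacit assumption, so this is a shared gap rather than a defect of your argument.
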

	\begin{proof}
		By dominated convergence and \Cref{lem:limitassumption-htilde}
		\[ \begin{aligned} 
			\lim_{t\uparrow t_k} \bb{E}\left[ g(t,X(t))\mid X(s)=x \right] &= \lim_{t\uparrow t_k} \sum_{y\in\bb{S}} g(t,y)p(s,x;t,y)\\
			 &= \sum_{y\in\bb{S}} g(t_k,y)\ind\{ L_ky=v_k \}p(s,x;t_k,y)  \\
			 &= \bb{E}\left[ g(t_k,X(t_k))\ind\{L_kX(t_k)=v_k\}\mid X(s)=x\right]  .
		\end{aligned} \]
	\end{proof}

	\begin{lem}
		\label{lem:ProofOfEquivalence-lem2}
		For all $k=1,\dots,n$,
		\[ \lim_{j\to\infty} \lim_{t\uparrow t_k} \bb{E}^{h} \left[ \frac{g(t,X(t))}{h(t,X(t))}\ind_{A_j[X^t]} \right]= \frac{ \bb{E}\left[ g(t_k,X(t_k))\ind\{ L_kX(t_k)=v_k \}\ind_{A[X^{t_k}]} \right] }{h(0,x_0)}. \]
	\end{lem}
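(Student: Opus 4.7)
The plan is to first eliminate $\bb{P}^h$ in favour of $\bb{P}$. Since $h$ satisfies Kolmogorov's backward equation, $\scr{A}h=0$, so $D^h_t = h(t,X(t))/h(0,x_0)$ and for any $\scr{F}_t$-measurable $Y_t$,
\[ \bb{E}^h[Y_t] = \frac{1}{h(0,x_0)}\bb{E}[Y_t\, h(t,X(t))]. \]
Applying this with $Y_t = g(t,X(t))\ind_{A_j[X^t]}/h(t,X(t))$, the factors of $h(t,X(t))$ cancel and the problem reduces to the purely $\bb{P}$-side computation
\[ \lim_{j\to\infty}\lim_{t\uparrow t_k} \bb{E}\bigl[g(t,X(t))\ind_{A_j[X^t]}\bigr]. \]

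For the inner limit I would invoke dominated convergence. From $g(t,x) = \exp\bigl(-\tfrac12(v(t)-L(t)x)^\T M(t)(v(t)-L(t)x)\bigr)$ with $M(t)$ positive semi-definite, $g\leq 1$, so the integrand is uniformly bounded by $1$. By Assumption \ref{ass:AssumptionsOnProcess-FiniteSum}, $X$ has $\bb{P}$-a.s.\ finitely many jumps on $[0,t_k]$ and almost surely does not jump exactly at the deterministic time $t_k$; hence along a.e.\ trajectory, $X(t) = X(t_k-) = X(t_k)$ is constant on some random left neighbourhood of $t_k$, reducing \Cref{lem:limitassumption-htilde} (which is pointwise in $x$) to
\[ g(t,X(t))\ \longrightarrow\ g(t_k,X(t_k))\ind\{L_kX(t_k)=v_k\}\qquad \bb{P}\text{-a.s.} \]
Since the family $A_j[X^t]$ is decreasing in $t$ by \ref{ass:EquivalenceAssumption-EventSequence1}, we also have $\ind_{A_j[X^t]} \downarrow \ind_{A_j[X^{t_k}]}$ with $A_j[X^{t_k}] = \bigcap_{t<t_k} A_j[X^t]$. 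Dominated convergence then yields
\[ \lim_{t\uparrow t_k}\bb{E}\bigl[g(t,X(t))\ind_{A_j[X^t]}\bigr] = \bb{E}\bigl[g(t_k,X(t_k))\ind\{L_kX(t_k)=v_k\}\ind_{A_j[X^{t_k}]}\bigr]. \]

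For the outer limit, the definition $A_j[X^t] = \{\sup_{0\leq s<t} V(s,X(s)) \leq j\}$ gives $A_j[X^{t_k}] = \{\sup_{0\leq s<t_k} V(s,X(s)) \leq j\}$, so $A_j[X^{t_k}] \uparrow \{\sup_{0\leq s<t_k} V(s,X(s)) < \infty\} = A[X^{t_k}]$ as $j\to\infty$ (using the identity $\bigcup_j \bigcap_{t<t_k} A_j[X^t] = \bigcap_{t<t_k}\bigcup_j A_j[X^t]$ which is immediate here because $\sup_{s<t} V(s,X(s))$ is non-decreasing in $t$). Monotone convergence applied to the nonnegative integrand then yields the stated identity after dividing by $h(0,x_0)$.

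The main obstacle is the pointwise convergence step: \Cref{lem:limitassumption-htilde} only describes $g(\cdot,x)$ for $x$ held fixed, whereas here both arguments of $g$ vary with $t$. The resolution is that $\bb{P}$-a.e.\ path of $X$ is piecewise constant with finitely many jumps on $[0,t_k]$, so on a random left neighbourhood of $t_k$ the trajectory is pinned at $X(t_k-) = X(t_k)$, and the fixed-$x$ lemma then applies directly. The exchange of the two limits themselves (first $t\uparrow t_k$, then $j\to\infty$) is performed in this order exactly as stated, so no Fubini-type argument is needed.
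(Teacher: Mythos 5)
Your argument is correct and shares the paper's overall skeleton: strip off $\bb{P}^h$ via $D^h_t = h(t,X(t))/h(0,x_0)$, handle the inner limit $t\uparrow t_k$ by a convergence theorem, and the outer limit $j\to\infty$ by monotone convergence. Where you genuinely diverge is the inner limit. The paper first shows that $Z(t)=g(t,X(t))$ converges \emph{in distribution} to $g(t_k,X(t_k))\ind\{L_kX(t_k)=v_k\}$ (by summing $f(g(t,x))\,p(0,x_0;t,x)$ over states and applying \Cref{lem:limitassumption-htilde} pointwise in $x$), then splits $\ind_{A_j[X^t]}$ as $\ind_{A_j[X^{t_k}]}$ plus a remainder that vanishes by Slutsky. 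You instead upgrade the fixed-$x$ limit to \emph{almost sure} convergence of $g(t,X(t))$ by noting that a path with finitely many jumps and (a.s.) no jump at the deterministic time $t_k$ is frozen at $X(t_k)$ on a left neighbourhood of $t_k$; combined with the bound $g\le 1$ and the monotone limit $\ind_{A_j[X^t]}\downarrow\ind_{A_j[X^{t_k}]}$, dominated convergence finishes the step. Your route is arguably tighter at the one delicate point: weak convergence of $Z(t)$ alone does not literally give convergence of $\bb{E}\bigl[Z(t)\ind_{A_j[X^{t_k}]}\bigr]$ against the fixed event $A_j[X^{t_k}]$, whereas a.s. convergence of the whole product does; the price is the (standard, but worth stating) fact that under $\bb{P}$ the process a.s. neither explodes before $t_k$ nor jumps at the fixed time $t_k$. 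One caveat you share with the paper: the interchange $\bigcup_j\bigcap_{t<t_k}A_j[X^t]=\bigcap_{t<t_k}\bigcup_j A_j[X^t]$ does not follow from monotonicity of $t\mapsto\sup_{s<t}V(s,X(s))$ alone (the right-hand set also contains paths on which this supremum is finite for every $t<t_k$ yet tends to infinity as $t\uparrow t_k$); what monotone convergence actually delivers is the event $\{\sup_{s<t_k}V<\infty\}$, which is precisely the set $A[X^{t_n}]$ as defined in \Cref{lem:htilde-property}, so the stated conclusion is unaffected.
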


	\begin{proof}
		Upon defining $Z(t) = g(t,X(t))$, it follows from dominated convergence and \Cref{lem:limitassumption-htilde} that for any bounded continuous function $f$,
		\begin{equation*}
			\begin{aligned}
				\lim_{t\uparrow t_k} \bb{E} f(Z(t)) &= \sum_{x\in\bb{S}}\lim_{t\uparrow t_k} f\left(g(t,x)\right)p(0,x_0;t,x) \\
				&= \sum_{x\in\bb{S}} f\left(g(t_k,x)\ind\{ L_k x =v_k \}\right)p(0,x_0;t_k,x) \\
				&= \bb{E}f\left( g(t_k,X(t_k))\ind\{ L_k X(t_k) =v_k \} \right).
			\end{aligned}
		\end{equation*}	
		Hence, 
		\begin{equation}
		\label{eq:weakconvergence}	
		Z(t)\dto g(t_k,X(t_k))\ind\{ L_k X(t_k) =v_k \}\text{ as }t\uparrow t_k.
		\end{equation}
 		Now 
		\[ \bb{E}\left[ Z(t)\ind_{A_j[X^t]} \right] = \bb{E}\left[ Z(t)\ind_{A_j[X^{t_k}]}\right] +\bb{E}\left[ Z(t)\ind_{A_j[X^t]\setminus A_j[X^{t_k}]}\right]  \]
		By \eqref{eq:weakconvergence}, the first term tends to $\bb{E}\left[ g(t_k,X(t_k))\ind\{ L_k X(t_k) =v_k \}\ind_{A_j[X^{t_k}]} \right]$ as $t\uparrow t_k$. Moreover, $\ind_{A_j[X^t]\setminus A_j[X^{t_k}]} \downarrow 0$ as $t\uparrow t_k$ and thus the second term tends to $0$ by Slutsky's theorem. Hence, by \eqref{eq:DefPstar}
		\begin{equation*}
			\begin{aligned}
				\lim_{j\to\infty} \lim_{t\uparrow t_k} \bb{E}^{h} \left[ \frac{Z(t)}{h(t,X(t))} \ind_{A_j[X^t]}\right] &= \frac{1}{h(0,x_0)}\lim_{j\to\infty}\lim_{t\uparrow t_k} \bb{E}\left[Z(t)\ind_{A_j[X^t]}\right] \\
				&= \frac{1}{h(0,x_0)}\lim_{j\to\infty}\bb{E}\left[ g(t_k,X(t_k))\ind\{ L_k X(t_k) =v_k \} \ind_{A_j[X^{t_k}]} \right]\\
				&= \frac{ \bb{E}\left[ g(t_k,X(t_k))\ind\{ L_kX(t_k)=v_k \}\ind_{A[X^{t_k}]} \right] }{h(0,x_0)}.
			\end{aligned}
		\end{equation*}
	\end{proof}
	
	\begin{lem}
		\label{lem:ProofOfEquivalence-lem3}
		For all $j$,
		\[ \sup_{t<t_n} \exp\left( \int_0^t \frac{\scr{A}g}{g}(s,X(s))\dd s \right) \ind_{A_j[X^t]} \leq \exp(jt_n).  \]
	\end{lem}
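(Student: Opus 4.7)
The plan is to directly estimate the integrand $\scr{A}g/g$ pointwise on the event $A_j[X^t]$ and then integrate. The key observation is that the decomposition in Equation \eqref{eq:Ah/h} naturally splits $\scr{A}g/g$ into three terms, two of which are automatically nonpositive under the assumptions we have at our disposal, leaving only a clean bound in terms of $V$.

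Concretely, first I would invoke \eqref{eq:Ah/h} to write
\[
\frac{\scr{A}g}{g}(s,x) \;=\; \partial_s \log g(s,x) \;+\; \sum_{\ell\in\scr{R}} \lambda_\ell^{g}(s,x) \;-\; \sum_{\ell\in\scr{R}} \lambda_\ell(s,x).
\]
Then I would apply \Cref{lem:limitassumption-htilde}, which gives $\partial_s \log g(s,x) \le 0$ on $(0,t_n)\times \bb{S}$, and \Cref{ass:AssumptionsOnProcess-NonnegativeRates}, which gives $\lambda_\ell(s,x)\ge 0$ so that $-\sum_\ell \lambda_\ell(s,x)\le 0$. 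Dropping these nonpositive contributions yields the pointwise bound
\[
\frac{\scr{A}g}{g}(s,x) \;\le\; \sum_{\ell\in\scr{R}} \lambda_\ell^{g}(s,x) \;=\; V(s,x),
\]
with $V$ as defined in \eqref{eq:defV-and-A}.

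Next, I would use the definition $A_j[X^t]=\{\sup_{0\le s<t} V(s,X(s))\le j\}$ to bound the integrand along the path. On this event, $V(s,X(s))\le j$ for every $s<t$, and since $t<t_n$, the integral is controlled by
\[
\ind_{A_j[X^t]}\int_0^t \frac{\scr{A}g}{g}(s,X(s))\dd s \;\le\; \ind_{A_j[X^t]}\int_0^t V(s,X(s))\dd s \;\le\; j\,t \;\le\; j\,t_n.
\]
Exponentiating and taking the supremum over $t<t_n$ gives the claimed bound $\exp(jt_n)$. No obstacle arises here: the argument is a direct term-by-term estimate, and the only ingredients needed are the sign of $\partial_t\log g$ from \Cref{lem:limitassumption-htilde}, nonnegativity of the original rates, and the defining bound on $V$ enforced by $A_j[X^t]$.
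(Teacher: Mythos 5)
Your proposal is correct and follows essentially the same route as the paper: both decompose $\scr{A}g/g$ via \eqref{eq:Ah/h} into $\partial_s\log g + V - \sum_\ell \lambda_\ell$, drop the two nonpositive terms using \Cref{lem:limitassumption-htilde} and nonnegativity of the rates, and bound $V\leq j$ on $A_j[X^t]$ to get $jt\leq jt_n$. No gaps.
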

	\begin{proof}
		Since $\scr{A}=\partial_t+\scr{L}$, 
		\[ \frac{\scr{A}g}{g}(s,x) = \partial_s\log g(s,x) + \frac{\scr{L}g}{g}(s,x). \]
		Note that 
		\[ \frac{\scr{L}g}{g}(s,x) = \sum_{\ell\in\scr{R}} \frac{\lambda_\ell(s,x)\left(g(s,x+\xi_\ell)-g(s,x)\right)}{g(s,x)} = V(s,x)-\sum_{\ell\in\scr{R}}\lambda_\ell(s,x) \leq V(s,x). \]
		By \Cref{lem:limitassumption-htilde}, $\partial_t\log g \leq 0$ and hence, for all $t<t_n$ and on $A_j[X^t]$, 
		\[ \int_0^t \frac{\scr{A}g}{g}(s,X(s))\dd s =  \int_0^t \partial_s\log g(s,X(s))\dd s + \int_0^t \frac{\scr{L}g}{g}(s,X(s))\leq jt \leq jt_n. \]
		
	\begin{proof}[Proof of \Cref{prop:sets}]
By \eqref{eq:behavior-loghtilde},
\begin{equation}
	\label{eq:limit-behavior-guided-rates}
		\lim_{t\uparrow t_k} \frac{g(t,x+\xi_\ell)}{g(t,x)} = \begin{cases} \frac{g(t_k,x+\xi_\ell)}{g(t_k,x)} & \text{if }L_kx=L_k(x+\xi_\ell)=v_k \\
		0 & \text{if }d_k(v_k,L_kx)>d_k(v_k,L_k(x+\xi_\ell) \\
		\infty & \text{if }d_k(v_k,L_kx) < d_k(v_k,L_k(x+\xi_\ell) \end{cases}.
	\end{equation}

	And therefore 
	\begin{equation}
		\label{eq:limit-behaviour-guided-rates}
		\lim_{t\uparrow t_k} \lambda_\ell^{g}(t,x) = \begin{cases}
			\lambda_\ell(t_k,x)\frac{g(t_k,x+\xi_\ell)}{g(t_k,x)} & \text{if }L_kx=L_k(x+\xi_\ell)=v_k \\
			0 & \text{if }d_k(v_k,L_k(x+\xi_\ell)) > d_k(v_k,L_kx )\\
			\infty & \text{if }d_k(v_k,L_k(x+\xi_\ell)) < d_k(v_k,L_kx )
		\end{cases}.
	\end{equation}
Now, for $t\in[t_{k-1},t_k)$, define $B_t^k = \{ L_kX(s)=v_k\text{ for all }s\in[t,t_k]\}$. Then it follows that for all $k$ and $t\in[t_{k-1},t_k)$,
\[ B_t^k \subseteq \left\{ \sup_{t_{k-1}\leq s<t_k}\, \sum_{\ell\in\scr{R}}\lambda_\ell^{g}(s,X(s)) <\infty \right\}. \]
Hence 
\[ 
	 A_n = \bigcup_{k=1}^n \bigcap_{t_{k-1}\leq t<t_k} B_t^k \\
	 \subseteq \bigcup_{k=1}^n \left\{  \sup_{t_{k-1}\leq s<t_k}\, \sum_{\ell\in\scr{R}}\lambda_\ell^{g}(s,X(s)) <\infty  \right\} = A[X^{t_n}] . \]
\end{proof}
	\end{proof}
	
			\begin{proof}[Proof of \Cref{thm:Equivalence}]
			Observe that the result follows immediately if we show that for $k=1,\dots,n$, $\bb{P}^g(L_k(X(t_k)=v_k) >0$. Now note The probability distribution of the next reaction time $\tau$ satisfies, for $t_{k-1}<s<t<t_k$ and $x\in \bb{S}$,
			\[ \bb{P}^{g}\left(\tau > t \mid X(s)=x\right) = \exp\left(-\int_s^t \sum_{\ell\in\scr{R}}\lambda_\ell^{g}(u, x)\dd u\right). \]
			
			Note that in \eqref{eq:limit-behaviour-guided-rates}, the convergence/divergence is at an exponential rate and therefore, by \eqref{eq:limit-behaviour-guided-rates} and \Cref{ass:greedy}, 
			\[ \bb{P}^{g}\left(\tau > t_k \mid X(s)=x\right) = 0 \]
			for all $s<t_k$ whenever $L_kx\neq v_k$. Hence, if $L_kX(s)\neq v_k$ for any $s<t_k$, $X$ jumps before time $t_k$ with probability $1$. Since this holds for any $s<t_k$, $X$ jumps an infinite amount of times if $L_kX$ does not hit $v_k$. 
			
			We now consider the jumps that take $L_kX$ closer to $v_k$ with respect to the metric $d_k$. Let 
			\[ \tau_+ = \inf\{t\geq s \colon d_k(v_k,L_kX(t))<d(v_k, L_kX(s))\} .\]
			Then the distribution of $\tau_+$ satisfies for any $x$ such that $L_kx\neq v_k$ and $t_{k-1}<s<t<t_k$,
			\[ \bb{P}^{g}(\tau_+ > t\mid X(s)=x) \leq \exp\left(-\int_s^t \sum_{\ell\in\scr{R}}\lambda_\ell^{g}(u,x)\ind_{\{d_k(v_k, L_k(x+\xi_\ell))<d_k(v_k,L_k x)\}} \dd u \right) .\]
			By \Cref{ass:greedy}, there are nonzero entries in this summation that tend to $\infty$ exponentially as $u\uparrow t_k$ and thus 
			\[ \bb{P}^{g} \left(\tau_+>t_k\mid X(s)=x\right)=0 \]
			for any $s<t_k$ and $L_kx\neq v_k$. In other words, for any $s<t_k$ and $L_k x\neq v_k$, 
			\[ \bb{P}^{g}\left( \exists t\in [s,t_k]\text{ such that }d_k(v_k,L_k X(t))< d(v_k,L_k x) \mid X(s)=x\right)=1. \]
			Now $\bb{S}$ is a discrete lattice and thus any set $\bb{S}\cap K$, where $K\subseteq\bb{R}^d$ is compact, contains only finitely many points. Hence, since $L_k X$ keeps jumping with probability $1$ to points strictly closer to $v_k$ if it does not hit $v_k$, it must hit $v_k$ with probability $1$ and thus
			\[ \bb{P}^{g}\left( \exists t\in [s,t_k]\text{ such that }L_kX(t)=v_k\mid X(s)=x\right)=1 \]
			for any $x$ with $L_kx\neq v_k$. Since this obviously also holds for $L_kx=v_k$, we have by the law of total probability 
			\begin{equation}
				\label{eq:proof-hitsx_T} 
				\bb{P}^{g}\left( \exists t\in [s,t_k]\text{ such that }L_kX(t)=v_k\right)=1 .
			\end{equation}
			Now denote the collection of events $B_s = \bigcup_{s\leq t\leq t_k} \{L_kX(t)=v_k\}$, $s\in[t_{k-1},t_k]$. By \eqref{eq:proof-hitsx_T}, $\bb{P}^{g}(B_s)=1$ for all $s$. Since $B_s \downarrow \{L_kX(t_k)=v_k\}$ as $s\uparrow t_k$, we have by monotonicity of  measures that $\bb{P}^{g}(L_kX(t_k)=v_k)=\lim_{s\uparrow t_k}\bb{P}^{g}(B_s)=1$. 	
		\end{proof}

	\section{Additional Lemmas}
	 \begin{lem}
		\label{lem:Bounds-on-htilde}
		Let $g$ be defined by \eqref{eq:htilde-noHF}. Then for all $x$, $g(t,x)$ is bounded and bounded away from zero uniformly in $t$. 
	\end{lem}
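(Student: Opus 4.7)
The plan is to work with the explicit representation \eqref{eq:htilde-noHF}, namely $g(t,x) \propto \exp\bigl(-\tfrac12 (v-Lx)^\T M(t)(v-Lx)\bigr)$ with $M(t) = \bigl(C + LaL^\T(T-t)\bigr)^{-1}$. By \Cref{lem:time-scaling}, the time-dependent proportionality constant is immaterial for the change of measure, so I may fix the normalisation by taking $\kappa(t) \equiv 1$, i.e. $g(t,x) = \exp\bigl(-\tfrac12 (v-Lx)^\T M(t)(v-Lx)\bigr)$. With this choice the required uniform-in-$t$ bounds reduce to showing that, for fixed $x$, the quadratic form $t \mapsto (v-Lx)^\T M(t)(v-Lx)$ is non-negative and bounded above on $[0,T]$.

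The first step is to establish regularity and positive definiteness of $M(t)$ on the compact interval $[0,T]$. The covariance matrix $C$ is positive definite, and $LaL^\T$ is positive definite by the standing assumption in \Cref{subsec:guiding_term_scaled_diffusion}. Consequently $C + LaL^\T(T-t)$ is positive definite for every $t\in[0,T]$, including the endpoint $t=T$ where it reduces to $C$. Since matrix inversion is continuous at invertible matrices, $t\mapsto M(t)$ is continuous on $[0,T]$ and takes values in the positive-definite cone; in particular the maximal eigenvalue $\lambda_{\max}(M(t))$ is a continuous, hence bounded, function of $t$.

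From positive definiteness of $M(t)$ the quadratic form $(v-Lx)^\T M(t)(v-Lx)$ is non-negative, which gives the upper bound $g(t,x) \le 1$ uniformly in $t$. For the lower bound, fix $x$ and write
\[
 (v-Lx)^\T M(t)(v-Lx) \;\le\; \lambda_{\max}(M(t)) \, \lVert v-Lx\rVert^2 \;\le\; K_x,
\]
where $K_x := \lVert v - Lx\rVert^2 \sup_{t\in[0,T]} \lambda_{\max}(M(t)) < \infty$. Hence $g(t,x) \ge \exp(-K_x/2) > 0$ uniformly in $t\in[0,T]$, which completes the argument.

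There is no real obstacle here: the only thing that could spoil the bounds is a blow-up of $M(t)$ as $t\uparrow T$, but this is precluded precisely because $C$ is taken to be positive definite (in particular this covers the choice $C = \eps LaL^\T$ with $\eps>0$ used in \Cref{lem:g_epsilon}). The same strategy generalises to the multiple-observation setting of \Cref{lem:bounds-on-htilde-multiple} by running the argument on each sub-interval $[t_{k-1},t_k]$ using the piecewise construction of $H,F$ in \eqref{eq:HF-ODEs-multiple}.
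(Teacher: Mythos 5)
Your proof is correct and follows essentially the same route as the paper: both bound the quadratic form $(v-Lx)^\T M(t)(v-Lx)$ above and below by controlling the eigenvalues of $M(t)$ uniformly on $[0,T]$, using positive definiteness of $C$ to rule out blow-up as $t\uparrow T$. The only cosmetic difference is that the paper derives explicit eigenvalue bounds via $\lambda_{\max}(M^\dagger(t))\le\lambda_{\max}(C)+\lambda_{\max}(LaL^\T)(T-t)$ whereas you invoke continuity of $t\mapsto M(t)$ on the compact interval and take the trivial upper bound $g\le 1$; both yield the same (per-$x$) conclusion.
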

	\begin{proof}
		Note 
		\[ (v-Lx)^\T M(t)(v-Lx) \geq \lambda_{\min}(M(t))\abs{v-Lx}^2 = \left(  \lambda_{\max}(M^\dagger(t))\right)^{-1}\abs{v-Lx}^2.\]
		Since for all symmetric matrices $A,B$,
		\[\lambda_{\min}(A)+\lambda_{\min}(B) \leq\lambda_{\min}(A+B)\leq \lambda_{\max}(A+B)\leq\lambda_{\max}(A)+\lambda_{\max}(B),\]
		we have that 
		\[ \left(\lambda_{\max}(M^\dagger(t))\right)^{-1} \geq \left( \lambda_{\max}(C)+\lambda_{\max}(LaL^\T)(T-t)\right)^{-1} .\]
		Now $LaL^\T$ is positive definite and thus the right hand side is decreasing in $t$. Hence, a lower bound is given by $ \left( \lambda_{\max}(C)  + T\lambda_{\max}\left(LaL^\T\right)\right)^{-1}$. We conclude 
		\[ g(t,x) \leq \exp\left( -\frac12 \abs{v-Lx}^2\left(\lambda_{\max}(C)  + T\lambda_{\max}\left(LaL^\T\right) \right)^{-1} \right). \]
		In a similar way, we can show 
		\[ g(t,x) \geq \exp\left( -\frac12 \abs{v-Lx}^2\lambda_{\min}(C)^{-1}\right). \]
	\end{proof}

\begin{cor}[Multiple observations]
\label{lem:bounds-on-htilde-multiple}
			Let $g$ be defined by  \eqref{eq:htilde-multiple-alternative}. Then for all $x$, $g(t,x)$  is bounded and bounded away from zero uniformly in $t$. 
\end{cor}
\begin{proof}
	The proof of \Cref{lem:Bounds-on-htilde} can be repeated using, for $t\in(t_{k-1},t_k]$, 
	\[ \abs{v(t)-L(t)x}^2 = \sum_{j=k}^n\abs{v_j-L_jx}^2  \in \left[ \abs{v_n-L_nx}^2 , \sum_{j=1}^n \abs{v_j-L_jx}^2 \right].\]
\end{proof}

\section{Change of generator under change of measure}\label{sec:change of generator}
Here, we briefly recap the argument given in Section 4.1 of \cite{palmowski2002}. 
Fix $g$ and define $\{\tilde{\mathbb{P}}_t,\, t\ge 0\}$ by 
\[ \frac{\dd \tilde{\mathbb{P}}_t}{\dd \bb{P}_t} = E^g(t) := \frac{g(t,X(t))}{g(0,x_0)} \exp\left(-\int_0^t \frac{\mathcal{A} g}{g}(s,X(s)) \dd s \right),\] where $\mathcal{A}$ is the generator of the space-time process under $\bb{P}$. We aim to show that under $\tilde{\bb{P}}$ the generator of this process is given by $\tilde{\mathcal{A}}$ where $g\tilde{\mathcal{A}}f= \mathcal{A}(fg)-f \mathcal{A}g$. For this, it suffices to prove that 
\[ \tilde{D}^f(t) := f(t,X(t)) - \int_0^t (\tilde{\mathcal{A}}f)(s, X(s)) \dd s \]
is a local martingale under $\tilde{\bb{P}}$. To show this, by Lemma 3.1 in \cite{palmowski2002} $\tilde{D}^f(t)$ is a local $\tilde{\bb{P}}$-martingale if and only if $\tilde{E}^f(t)$ is a local $\tilde{\bb{P}}$-martingale, where $\tilde{E}^f(t)$ is defined as $E^f(t)$ but with $\mathcal{A}$ replaced by $\tilde{\mathcal{A}}$. To show that $\tilde{E}^f(t)$ is a local $\tilde{\bb{P}}$-martingale, it suffices to show that $\tilde{E}^f(t) E^g(t)$ is a local $\bb{P}$-martingale (Cf.\ Lemma 4.1 in \cite{palmowski2002}). 
Straightforward calculus gives
\[ \tilde{E}^f(t) E^g(t) = E^{fg}(t), \]
which is indeed a local $\bb{P}$ martingale.

	\bibliographystyle{authordate1}
	\bibliography{biblio}
\end{document}